\newtheorem{prop}{Proposition}[section]
\newtheorem{thm}[prop]{Theorem}
\newtheorem{cor}[prop]{Corollary}
\newtheorem{lemma}[prop]{Lemma}
\theoremstyle{definition}
\newtheorem{defi}[prop]{Definition}
\newtheorem{rem}[prop]{Remark}
\newtheorem{ex}[prop]{Example}
\newtheorem{notation}[prop]{Notation}
\newcommand{\N}{\mathbb{N}}
\newcommand{\Z}{\mathbb{Z}}
\newcommand{\R}{\mathbb{R}}
\newcommand{\C}{\mathbb{C}}
\newcommand{\K}{\mathbb{K}}
\newcommand{\st}{\;:\;}
\newcommand{\sspace}{\cdot}
\newcommand{\ssspace}{\cdot\cdot}
\newcommand{\scalar}[2]{\left\langle #1 \,\middle|\, #2 \right\rangle}
\newcommand{\duale}[1] {{{#1}^{*}}}
\newcommand{\g}{\mathfrak{g}}
\newcommand{\paragrafoo}[1]{\smallskip {\bfseries \itshape [#1]}.}
\DeclareMathOperator{\im}{i}
\DeclareMathOperator{\imm}{im}
\DeclareMathOperator{\de}{d}
\DeclareMathOperator{\id}{id}
\DeclareMathOperator{\End}{End}
\DeclareMathOperator{\Hom}{Hom}
\DeclareMathOperator{\Tot}{Tot}
\DeclareMathOperator{\Doub}{Doub}
\DeclareMathOperator{\Cliff}{Cliff}
\newcommand{\del}{\partial}
\newcommand{\delbar}{\overline{\del}}
\newcommand{\GCD}[2]{\mathrm{GCD}\left(#1, \, #2\right)}
\title{Inequalities {\itshape à la} Fr\"olicher and cohomological decompositions}
\author{Daniele Angella}
\address[Daniele Angella]{Dipartimento di Matematica\\
Universit\`{a} di Pisa\\
Largo Bruno Pontecorvo 5, 56127\\
Pisa, Italy}
\email{angella@mail.dm.unipi.it}
\curraddr{Istituto Nazionale di Alta Matematica\\
Dipartimento di Matematica e Informatica\\
Universit\`{a} di Parma\\
Parco Area delle Scienze 53/A, 43124\\
Parma, Italy}
\email{daniele.angella@math.unipr.it}
\author{Adriano Tomassini}
\address[Adriano Tomassini]{Dipartimento Di Matematica e Informatica\\
Universit\`{a} di Parma\\
Parco Area delle Scienze 53/A, 43124\\
Parma, Italy}
\email{adriano.tomassini@unipr.it}
\keywords{$\partial\overline{\partial}$-Lemma, Bott-Chern cohomology, Aeppli cohomology, generalized-complex, symplectic}
\thanks{This work was supported by the Project PRIN ``Varietà reali e complesse: geometria, topologia e analisi armonica'', by the Project FIRB ``Geometria Differenziale e Teoria Geometrica delle Funzioni'', and by GNSAGA of INdAM}
\subjclass[2010]{32Q99, 53D05, 53D18}
\begin{document}

\vspace{-2cm}
\begin{minipage}[l]{10cm}
{\sffamily
  D. Angella, A. Tomassini, Inequalities {\itshape à la} Fr\"olicher and cohomological decompositions,
  to appear in {\em J. Noncommut. Geom.}.
}
\end{minipage}
\vspace{2cm}

\begin{abstract}
 We study Bott-Chern and Aeppli cohomologies of a vector space endowed with two anti-commuting endomorphisms whose square is zero. In particular, we prove an inequality {\itshape à la} Fr\"olicher relating the dimensions of the Bott-Chern and Aeppli cohomologies to the dimensions of the Dolbeault cohomologies. We prove that the equality in such an inequality {\itshape à la} Fr\"olicher characterizes the validity of the so-called cohomological property of satisfying the $\del\delbar$-Lemma. As an application, we study cohomological properties of compact either complex, or symplectic, or, more in general, generalized-complex manifolds.
\end{abstract}

\maketitle

\section*{Introduction}

Given a compact complex manifold $X$, the \emph{Bott-Chern cohomology}, $H^{\bullet,\bullet}_{BC}(X)$, \cite{bott-chern}, and the \emph{Aeppli cohomology}, $H^{\bullet,\bullet}_{A}(X)$, \cite{aeppli}, provide useful invariants, and have been studied by several authors in different contexts, see, e.g., \cite{aeppli, bott-chern, bigolin, deligne-griffiths-morgan-sullivan, varouchas, alessandrini-bassanelli, schweitzer, kooistra, bismut, tseng-yau-3, angella-1, angella-tomassini-3}. In the case of compact K\"ahler manifolds, or, more in general, of compact complex manifolds satisfying the \emph{$\del\delbar$-Lemma}, the Bott-Chern and the Aeppli cohomology groups are naturally isomorphic to the Dolbeault cohomology groups.
The $\del\delbar$-Lemma for compact complex manifolds has been studied by P. Deligne, Ph.~A. Griffiths, J. Morgan, and D.~P. Sullivan in \cite{deligne-griffiths-morgan-sullivan}, where it is proven that the validity of the $\del\delbar$-Lemma on a compact complex manifold $X$ yields the formality of the differential graded algebra
$\left( \wedge^\bullet X \otimes_\R \C ,\, \de \right)$, \cite[Main Theorem]{deligne-griffiths-morgan-sullivan}; in particular, a topological obstruction to the existence of K\"ahler structures on compact differentiable manifolds follows, \cite[Lemma 5.11]{deligne-griffiths-morgan-sullivan}.
Furthermore, they showed that any compact manifold admitting a proper modification from a K\"ahler manifold (namely, a manifold in class $\mathcal{C}$ of Fujiki, \cite{fujiki}) satisfies the $\del\delbar$-Lemma, \cite[Corollary 5.23]{deligne-griffiths-morgan-sullivan}.
An adapted version of the $\del\delbar$-Lemma for differential graded Lie algebras has been considered also in \cite{goldman-millson} by W.~M. Goldman and J.~J. Millson, where they used a ``principle of two types'', see \cite[Proposition 7.3(ii)]{goldman-millson}, as a key tool to prove formality of certain differential graded Lie algebras in the context of deformation theory, \cite[Corollary page 84]{goldman-millson}. An algebraic approach to the $\del\delbar$-Lemma has been developed also by Y.~I. Manin in \cite{manin} in the context of differential Gerstenhaber-Batalin-Vilkovisky algebras, in order to study Frobenius manifolds arising by means of solutions of Maurer-Cartan type equations.
A generalized complex version of the $\del\delbar$-Lemma has been introduced and studied by G.~R. Cavalcanti in \cite{cavalcanti, cavalcanti-jgp}.

\medskip

Since Bott-Chern and Aeppli cohomologies on compact K\"ahler manifolds coincide with Dolbeault cohomology, in \cite{angella-tomassini-3}, we were concerned in studying Bott-Chern cohomology of compact complex (possibly non-K\"ahler) manifolds $X$, showing the following \emph{inequality \itshape{à la} Fr\"olicher}, which relates the dimensions of the Bott-Chern and Aeppli cohomologies to the Betti numbers, \cite[Theorem A]{angella-tomassini-3}:
$$ \text{for any }k\in\Z\;, \qquad \sum_{p+q=k} \left( \dim_\C H^{p,q}_{BC}(X) + \dim_\C H^{p,q}_{BC}(X) \right) \;\geq\; 2\,\dim_\C H^k_{dR}(X;\C) \;; $$
furthermore, the authors showed that the equality in the above inequality holds for every $k\in\Z$ if and only if $X$ satisfies the $\del\delbar$-Lemma, \cite[Theorem B]{angella-tomassini-3}.

It turns out that such results depend actually on the structure of double complex of $\left(\wedge^{\bullet,\bullet}X ,\, \del ,\, \delbar\right)$. In this paper, we are concerned in a generalization of the inequality {\itshape à la} Fr\"olicher in a more algebraic framework, so as to highlight the algebraic aspects. As an application, we recover the above results on the cohomology of compact complex manifolds, and we get results on the cohomology of compact symplectic manifolds and compact generalized complex manifolds: more precisely, characterizations of compact symplectic manifolds satisfying the Hard Lefschetz Condition and of compact generalized complex manifolds satisfying the $\de\de^{\mathcal{J}}$-Lemma are provided.

\medskip

More precisely, consider a double complex $\left(B^{\bullet,\bullet}, \, \del,\, \delbar\right)$ of $\K$-vector spaces (namely, a $\Z^2$-graded $\K$-vector space $B^{\bullet,\bullet}$ endowed with $\del\in\End^{1,0}(B^{\bullet,\bullet})$ and $\delbar\in\End^{0,1}(B^{\bullet,\bullet})$ such that $\del^2=\delbar^2=\del\delbar+\delbar\del=0$). Several cohomologies can be studied: other than the \emph{Dolbeault 
cohomologies}
$$
  H^{\bullet,\bullet}_{\left( \del ; \del \right)}\left(B^{\bullet,\bullet}\right) \;:=\; \frac{\ker\del}{\imm\del}
  \qquad \text{ and } \qquad
  H^{\bullet,\bullet}_{\left( \delbar ; \delbar \right)}\left(B^{\bullet,\bullet}\right) \;:=\; \frac{\ker\delbar}{\imm\delbar} \;,
$$
and than the cohomology of the associated total complex, $\left(\Tot^\bullet B^{\bullet,\bullet} := \bigoplus_{p+q=\bullet} B^{p,q},\, \de:=\del+\delbar\right)$,
$$ H^\bullet_{\left( \de ; \de \right)}\left(\Tot^\bullet B^{\bullet,\bullet}\right) \;:=\; \frac{\ker\de}{\imm\de} \;,$$
one can consider also the \emph{Bott-Chern cohomology} and the \emph{Aeppli cohomology}, that is,
$$
  H^{\bullet,\bullet}_{\left( \del , \delbar ; \del\delbar \right)}\left(B^{\bullet,\bullet}\right) \;:=\; \frac{\ker\del\cap\ker\delbar}{\imm\del\delbar}
  \qquad \text{ and } \qquad
  H^{\bullet,\bullet}_{\left( \del\delbar ; \del , \delbar \right)}\left(B^{\bullet,\bullet}\right) \;:=\; \frac{\ker\del\delbar}{\imm\del+\imm\delbar} \;.
$$
The identity induces natural morphisms of (possibly $\Z$-graded, possibly $\Z^2$-graded) $\K$-vector spaces:
$$
\xymatrix{
 & H^{\bullet,\bullet}_{\left( \del , \delbar ; \del\delbar \right)}\left(B^{\bullet,\bullet}\right) \ar[ld] \ar[rd] \ar[d] & \\
 H^{\bullet,\bullet}_{\left( \del ; \del \right)}\left(B^{\bullet,\bullet}\right) \ar[rd] & H^\bullet_{\left( \de ; \de \right)}\left(\Tot^\bullet B^{\bullet,\bullet}\right) \ar[d] & H^{\bullet,\bullet}_{\left( \delbar ; \delbar \right)}\left(B^{\bullet,\bullet}\right) \ar[ld] \\
& H^{\bullet,\bullet}_{\left( \del\delbar ; \del , \delbar \right)}\left(B^{\bullet,\bullet}\right) &
}
$$
In general, the above maps are neither injective nor surjective; actually, the map $H^{\bullet,\bullet}_{\left( \del , \delbar ; \del\delbar \right)}\left(B^{\bullet,\bullet}\right) \to H^{\bullet,\bullet}_{\left( \del\delbar ; \del , \delbar \right)}\left(B^{\bullet,\bullet}\right)$ being injective is equivalent to all the above maps being isomorphisms, \cite[Lemma 5.15, Remark 5.16, 5.21]{deligne-griffiths-morgan-sullivan}. In such a case, one says that $\left( B^{\bullet,\bullet},\, \del,\, \delbar\right)$ \emph{satisfies the $\del\delbar$-Lemma}.

By considering the spectral sequence associated to the structure of double complex of $\left(B^{\bullet,\bullet},\, \del,\, \delbar\right)$, one gets the \emph{Fr\"olicher inequality}, \cite[Theorem 2]{frolicher},
$$ \min \left\{ \dim_\K \Tot^\bullet H^{\bullet,\bullet}_{\left( \del ; \del \right)}\left(B^{\bullet,\bullet}\right) ,\, \dim_\K \Tot^\bullet H^{\bullet,\bullet}_{\left( \delbar ; \delbar \right)}\left(B^{\bullet,\bullet}\right) \right\} \;\geq\; \dim_\K H^\bullet_{\left( \de ; \de \right)}\left(\Tot^\bullet B^{\bullet,\bullet}\right) \;.$$

We prove an \emph{inequality {\itshape à la} Fr\"olicher} also for the Bott-Chern and Aeppli cohomologies. More precisely, we prove the following result.

\smallskip
\noindent {\bfseries Theorem 1 (see Theorem \ref{thm:disug-frol} and Corollary \ref{cor:frolicher-like-double-complexes}).\ }
{\itshape
Let $A^\bullet$ be a $\Z$-graded $\mathbb{K}$-vector space endowed with two endomorphisms $\delta_1 \in \End^{\hat\delta_1}\left(A^\bullet\right)$ and $\delta_2 \in \End^{\hat\delta_2}\left(A^\bullet\right)$ such that $\delta_1^2 = \delta_2^2 = \delta_1\delta_2+\delta_2\delta_1 = 0$.
Suppose that
$$ \dim_\K H^\bullet_{\left( \delta_1 ; \delta_1 \right)}\left(A^\bullet\right) < +\infty  \qquad \text{ and } \qquad  \dim_\K H^\bullet_{\left( \delta_2 ; \delta_2 \right)}\left(A^\bullet\right) \;<\; +\infty \;.$$
Then
$$
\dim_\K H^\bullet_{\left( \delta_1 , \delta_2 ; \delta_1\delta_2 \right)}\left(A^\bullet\right) + \dim_\K H^\bullet_{\left( \delta_1\delta_2 ; \delta_1 , \delta_2 \right)}\left(A^\bullet\right) \;\geq\; \dim_\K H^\bullet_{\left( \delta_1 ; \delta_1 \right)}\left(A^\bullet\right) + \dim_\K H^\bullet_{\left( \delta_2 ; \delta_2 \right)}\left(A^\bullet\right) \;. 
$$

In particular, given a bounded double complex $\left( B^{\bullet,\bullet},\, \del,\, \delbar \right)$, and supposed that
$$ \dim_\K \Tot^\bullet H^{\bullet,\bullet}_{\left( \delta_1 ; \delta_1 \right)}\left(B^{\bullet,\bullet}\right) < +\infty  \qquad \text{ and } \qquad  \dim_\K \Tot^\bullet H^{\bullet,\bullet}_{\left( \delta_2 ; \delta_2 \right)}\left(B^{\bullet,\bullet}\right) \;<\; +\infty \;,$$
then, for $\pm\in\{+,-\}$,
$$
 \dim_\K \Tot^\bullet H^{\bullet,\bullet}_{\left( \delta_1 , \delta_2 ; \delta_1\delta_2 \right)}\left(B^{\bullet,\bullet}\right) + \dim_\K \Tot^\bullet H^{\bullet,\bullet}_{\left( \delta_1\delta_2 ; \delta_1 , \delta_2 \right)}\left(B^{\bullet,\bullet}\right) \;\geq\; 2\, \dim_\K H^\bullet_{\left( \delta_1 \pm \delta_2 ; \delta_1 \pm \delta_2 \right)}\left(\Tot^\bullet B^{\bullet,\bullet}\right) \;. 
$$
}
\smallskip

Furthermore, we provide a characterization of the equality in the above inequality {\itshape à la} Fr\"olicher in terms of the validity of the $\delta_1\delta_2$-Lemma.

\smallskip
\noindent {\bfseries Theorem 2 (see Theorem \ref{thm:caratt-deldelbar-lemma-double}).\ }
{\itshape
 Let $\left( B^{\bullet,\bullet},\, \delta_1,\, \delta_2 \right)$ be a bounded double complex.
 Suppose that
 $$ \dim_\K H^{\bullet,\bullet}_{\left( \delta_1 ; \delta_1 \right)}\left(B^{\bullet,\bullet}\right) < +\infty  \qquad \text{ and } \qquad  \dim_\K H^{\bullet,\bullet}_{\left( \delta_2 ; \delta_2 \right)}\left(B^{\bullet,\bullet}\right) \;<\; +\infty \;.$$
 The following conditions are equivalent:
 \begin{enumerate}
  \item[{\itshape (\ref{item:caratt-bi-1})}] $B^{\bullet,\bullet}$ satisfies the $\delta_1\delta_2$-Lemma;
  \item[{\itshape (\ref{item:caratt-bi-2})}] the equality
        \begin{eqnarray*}
        \lefteqn{ \dim_\K \Tot^{\bullet} H^{\bullet,\bullet}_{\left( \delta_1 , \delta_2 ; \delta_1\delta_2 \right)}\left(B^{\bullet,\bullet}\right) + \dim_\K \Tot^{\bullet} H^{\bullet,\bullet}_{\left( \delta_1\delta_2 ; \delta_1 , \delta_2 \right)}\left(B^{\bullet,\bullet}\right) } \\[5pt]
        &=& 2\, \dim_\K H^{\bullet}_{\left( \delta_1+\delta_2 ; \delta_1+\delta_2 \right)}\left(\Tot^\bullet B^{\bullet,\bullet}\right) \;.
        \end{eqnarray*}
        holds.
 \end{enumerate}

}
\smallskip

\medskip

Given a compact complex manifold $X$, one can apply Corollary \ref{cor:frolicher-like-double-complexes} and Theorem \ref{thm:caratt-deldelbar-lemma-double} to the double complex $\left(\wedge^{\bullet,\bullet}X ,\, \del ,\, \delbar\right)$. More precisely, one recovers \cite[Theorem A]{angella-tomassini-3}, getting that, on every compact complex manifold,
$$ \dim_\C \Tot^\bullet H^{\bullet,\bullet}_{BC}(X) + \dim_\C \Tot^\bullet H^{\bullet,\bullet}_{A}(X) \;\geq\; 2\, \dim_\C H^\bullet_{dR}(X;\C) \;, $$
and the characterization of the $\del\delbar$-Lemma in terms of the Bott-Chern cohomology given in \cite[Theorem B]{angella-tomassini-3}, namely, that the equality holds if and only if the $\del\delbar$-Lemma holds.

\medskip

Furthermore, Corollary \ref{cor:frolicher-like-double-complexes} and Theorem \ref{thm:caratt-deldelbar-lemma-double} allow also to study the cohomology of compact manifolds $X$ endowed with symplectic forms $\omega$. In this case, one considers the $\Z$-graded algebra $\wedge^\bullet X$ endowed with $\de \in \End^{1}\left(\wedge^\bullet X\right)$ and $\de^\Lambda := \left[\de,\, -\iota_{\omega^{-1}}\right] \;\in\; \End^{-1}\left(\wedge^\bullet X \right)$, which satisfy $\de^2=\left(\de^\Lambda\right)^2=\de\de^\Lambda+\de^\Lambda\de=0$. The symplectic Bott-Chern and Aeppli cohomologies have been introduced and studied by L.-S. Tseng and S.-T. Yau in \cite{tseng-yau-1, tseng-yau-2, tseng-yau-3}. In particular, we get the following result.

\smallskip
\noindent {\bfseries Theorem 3 (see Theorem \ref{thm:sympl}).\ }
{\itshape
Let $X$ be a compact manifold endowed with a symplectic structure $\omega$.
The inequality
\begin{equation}
\tag{\ref{eq:sympl}}
\dim_\R H^{\bullet}_{\left( \de , \de^\Lambda ; \de\de^\Lambda \right)}\left(X\right) + \dim_\R H^{\bullet}_{\left( \de\de^\Lambda ; \de , \de^\Lambda \right)}\left(X\right) \;\geq\; 2\, \dim_\R H^{\bullet}_{dR}(X;\R)
\end{equation}
holds. Furthermore, the equality in \eqref{eq:sympl} holds if and only if $X$ satisfies the Hard Lefschetz Condition.
}
\smallskip

We recall that a compact $2n$-dimensional manifold $X$ endowed with a symplectic form $\omega$ is said to satisfy the \emph{Hard Lefschetz Condition} if $\left[\omega\right]^k\smile \sspace \colon H^{n-k}_{dR}(X;\R) \to H^{n+k}_{dR}(X;\R)$ is an isomorphism for every $k\in\Z$.

\medskip

Finally, Corollary \ref{cor:frolicher-like-double-complexes} and Theorem \ref{thm:caratt-deldelbar-lemma-double} can be applied also to the study of the cohomology of generalized-complex manifolds. Generalized-complex geometry has been introduced by N. Hitchin in \cite{hitchin}, and studied, among others, by M. Gualtieri, \cite{gualtieri-phdthesis, gualtieri, gualtieri-kahler}, and G.~R. Cavalcanti, \cite{cavalcanti}. It provides a way to generalize both complex and symplectic geometry, since complex structures and symplectic structures appear as special cases of generalized-complex structures. See, e.g., \cite{hitchin-introduction} for an introduction to generalized-complex geometry; the cohomology of generalized-complex manifolds has been studied especially by G.~R. Cavalcanti, \cite{cavalcanti, cavalcanti-jgp, cavalcanti-computations}. On a manifold $X$ endowed with an $H$-twisted generalized complex structure $\mathcal{J}$, (see \S\ref{subsec:gen-cplx} for the definitions,) one can 
consider the $\Z$-graduation $\Tot \wedge^\bullet X\otimes_\R\C = \bigoplus_{k\in\Z} U^k_{\mathcal{J}}$, and the endomorphisms $\del_{\mathcal{J},H}\in\End^1\left(U^\bullet_{\mathcal{J}}\right)$ and $\delbar_{\mathcal{J},H}\in\End^{-1}\left(U^\bullet_{\mathcal{J}}\right)$, which satisfy $\del_{\mathcal{J},H}^2=\delbar_{\mathcal{J},H}^2=\del_{\mathcal{J},H}\delbar_{\mathcal{J},H}+\delbar_{\mathcal{J},H}\del_{\mathcal{J},H}=0$; then, let
$$ GH^{\bullet}_{\del_{\mathcal{J},H}}(X) \;:=\; \frac{\ker \del_{\mathcal{J},H}}{\imm \del_{\mathcal{J},H}} \;, \qquad GH^{\bullet}_{\delbar_{\mathcal{J},H}}(X) \;:=\; \frac{\ker \delbar_{\mathcal{J},H}}{\imm \delbar_{\mathcal{J},H}} \;, $$
and
$$ GH^{\bullet}_{BC_{\mathcal{J},H}}(X) \;:=\; \frac{\ker{\del_{\mathcal{J},H} \cap \ker \delbar_{\mathcal{J},H}}}{\imm \del_{\mathcal{J},H}\delbar_{\mathcal{J},H}} \;, \qquad GH^{\bullet}_{A_{\mathcal{J},H}}(X) \;:=\; \frac{\ker \del_{\mathcal{J},H}\delbar_{\mathcal{J},H}}{\imm \del_{\mathcal{J},H} + \imm \delbar_{\mathcal{J},H}} \;. $$
The above general results yield the following.

\smallskip
\noindent {\bfseries Theorem 4 (see Theorem \ref{thm:gen-frol-ineq} and Theorem \ref{thm:gen-charact}).\ }
{\itshape
 Let $X$ be a compact differentiable manifold endowed with an $H$-twisted generalized complex structure $\mathcal{J}$. Then
 \begin{equation}\tag{\ref{eq:ineq-frol-cplx-gen}}
  \dim_\C GH^{\bullet}_{BC_{\mathcal{J},H}}(X) + \dim_\C GH^{\bullet}_{A_{\mathcal{J},H}}(X) \;\geq\; \dim_\C GH^{\bullet}_{\delbar_{\mathcal{J},H}}(X) + \dim_\C GH^{\bullet}_{\del_{\mathcal{J},H}}(X) \;.
 \end{equation}
 Furthermore, $X$ satisfies the $\del_{\mathcal{J},H}\delbar_{\mathcal{J},H}$-Lemma if and only if the Hodge and Fr\"olicher spectral sequences associated to the canonical double complex $\left(U^{\bullet_1-\bullet_2}_{\mathcal{J}}\otimes\beta^{\bullet_2},\, \del_{\mathcal{J},H} \otimes_\C \id,\, \delbar_{\mathcal{J},H} \otimes_\C \beta\right)$ degenerate at the first level and the equality in \eqref{eq:ineq-frol-cplx-gen} holds.
}
\smallskip

%

\section{Preliminaries and notation}

Fix $\K\in\{\R,\,\C\}$. In this section, we summarize some notation and results concerning graded $\K$-vector spaces endowed with two commuting differentials.

\subsection{(Bi-)graded vector spaces}

We set the notation, in constructing two functors in order to change over $\Z$-graduation and $\Z^2$-graduation of a $\K$-vector space.

\medskip

Consider a $\Z^2$-graded $\K$-vector space $A^{\bullet,\bullet}$ endowed with two endomorphisms $\delta_1 \in \End^{\hat\delta_{1,1}, \hat\delta_{1,2}}\left(A^{\bullet,\bullet}\right)$ and $\delta_2 \in \End^{\hat\delta_{2,1}, \hat\delta_{2,2}}\left(A^{\bullet,\bullet}\right)$ such that $\delta_1^2=\delta_2^2=\delta_1\delta_2+\delta_2\delta_1=0$. Define the $\Z$-graded $\K$-vector space
$$ \Tot^\bullet\left(A^{\bullet,\bullet}\right) \;:=\; \bigoplus_{p+q=\bullet} A^{p,q} \;, $$
endowed with the endomorphisms
$$ \delta_1 \in \End^{\hat\delta_{1,1}+\hat\delta_{1,2}}\left(\Tot^\bullet\left(A^{\bullet,\bullet}\right)\right) \qquad \text{ and } \qquad \delta_2 \in \End^{\hat\delta_{2,1}+\hat\delta_{2,2}}\left(\Tot^\bullet\left(A^{\bullet,\bullet}\right)\right) $$
such that $\delta_1^2=\delta_2^2=\delta_1\delta_2+\delta_2\delta_1=0$.

\medskip

Conversely, consider a $\Z$-graded $\K$-vector space $A^\bullet$ endowed with two endomorphisms $\delta_1 \in \End^{\hat\delta_1}\left(A^\bullet\right)$ and $\delta_2 \in \End^{\hat\delta_2}\left(A^\bullet\right)$ such that $\delta_1^2=\delta_2^2=\delta_1\delta_2+\delta_2\delta_1=0$. Following \cite[\S1.3]{brylinski}, \cite[\S4.2]{cavalcanti}, see \cite[\S II.2]{goodwillie}, \cite[\S II]{connes}, take an infinite cyclic multiplicative group $\left\{ \beta^m \st m \in \Z \right\}$ generated by some $\beta$, and consider the $\Z$-graded $\K$-vector space $\bigoplus_{\bullet\in\Z}\K\, \beta^\bullet$. Define the $\Z^2$-graded $\K$-vector space
$$ \Doub^{\bullet_1,\bullet_2}\left(A^{\bullet}\right) \;:=\; A^{\hat\delta_1\,\bullet_1 + \hat\delta_2\,\bullet_2} \otimes_\K \K\,\beta^{\bullet_2} \;, $$
endowed with the endomorphisms
$$ \delta_1\otimes_\K \id \in \End^{1,0}\left(\Doub^{\bullet,\bullet}\left(A^\bullet\right)\right) \qquad \text{ and } \qquad \delta_2\otimes_\K\beta \in \End^{1,0}\left(\Doub^{\bullet,\bullet}\left(A^\bullet\right)\right) \;, $$
which satisfy $\left(\delta_1\otimes_\K\id\right)^2=\left(\delta_2\otimes_\K\beta\right)^2=\left(\delta_1\otimes_\K\id\right)\left(\delta_2\otimes_\K\beta\right)+\left(\delta_2\otimes_\K\beta\right)\left(\delta_1\otimes_\K\id\right)=0$; following \cite[\S1.3]{brylinski}, \cite[\S4.2]{cavalcanti}, the double complex $\left(\Doub^{\bullet,\bullet}\left(A^{\bullet}\right),\, \delta_1\otimes_\K\id,\, \delta_2\otimes_\K\beta\right)$ is called the \emph{canonical double complex} associated to $A^{\bullet}$.

\subsection{Cohomologies}\label{subsec:cohom-complexes}

Let $A^\bullet$ be a $\Z$-graded $\mathbb{K}$-vector space endowed with two endomorphisms $\delta_1 \in \End^{\hat\delta_1}\left(A^\bullet\right)$ and $\delta_2 \in \End^{\hat\delta_2}\left(A^\bullet\right)$ such that
$$ \delta_1^2 \;=\; \delta_2^2 \;=\; \delta_1\delta_2+\delta_2\delta_1 \;=\; 0 \;. $$

Since one has the $\Z$-graded $\K$-vector sub-spaces $\imm\delta_1\delta_2 \subseteq \ker\delta_1 \cap \ker\delta_2$, and $\imm\delta_1 \subseteq \ker\delta_1$, and $\imm\delta_2 \subseteq \ker\delta_2$, and $\imm\delta_1+\imm\delta_2 \subseteq \ker\delta_1\delta_2$, one can define the $\Z$-graded $\K$-vector spaces
\begin{eqnarray*}
& H^\bullet_{\left( \delta_1 , \delta_2 ; \delta_1\delta_2 \right)}\left(A^\bullet\right) \;:=\; \frac{\ker\delta_1 \cap \ker\delta_2}{\imm\delta_1\delta_2} \;, & \\[5pt]
H^\bullet_{\left( \delta_1 ; \delta_1 \right)}\left(A^\bullet\right) \;:=\; \frac{\ker\delta_1}{\imm\delta_1} \;, & & H^\bullet_{\left( \delta_2 ; \delta_2 \right)}\left(A^\bullet\right) \;:=\; \frac{\ker\delta_2}{\imm\delta_2} \;, \\[5pt]
 & H^\bullet_{\left( \delta_1\delta_2 ; \delta_1 , \delta_2 \right)}\left(A^\bullet\right) \;:=\; \frac{\ker\delta_1\delta_2}{\imm\delta_1 + \imm\delta_2} \;, &
\end{eqnarray*}
and, since one has the $\K$-vector sub-space $\imm\left(\delta_1+\delta_2\right) \subseteq \ker\left(\delta_1+\delta_2\right)$, one can define the $\K$-vector space
$$ H_{\left( \delta_1+\delta_2 ; \delta_1+\delta_2 \right)}\left(\Tot A^\bullet\right) \;:=\; \frac{\ker\left(\delta_1+\delta_2\right)}{\imm\left(\delta_1+\delta_2\right)} \;; $$
we follow notation in \cite[Remark 5.16]{deligne-griffiths-morgan-sullivan}: more precisely, if maps $f_j \colon C_j\to A$ for $j\in\{1,\ldots,r\}$ and $g_k \colon A \to B_k$ for $k\in\{1,\ldots,s\}$ of $\K$-vector spaces are given, then $H_{ \left( f_1 , \ldots, f_r ; g_1 , \ldots, g_s \right) }$ denotes the quotient $\frac{\bigcap_{j=1}^{r} \ker f_j}{ \sum_{k=1}^{s} \imm g_k }$.
(Note that, up to consider $-\delta_2\in\End^{\hat\delta_2}\left(A^\bullet\right)$ instead of $\delta_2 \in \End^{\hat\delta_2}\left(A^\bullet\right)$, one has the $\K$-vector sub-space $\imm\left(\delta_1-\delta_2\right) \subseteq \ker\left(\delta_1-\delta_2\right)$, and hence one can consider also the $\K$-vector space
$H_{\left( \delta_1-\delta_2 ; \delta_1-\delta_2 \right)}\left(\Tot A^\bullet\right) := \frac{\ker\left(\delta_1-\delta_2\right)}{\imm\left(\delta_1-\delta_2\right)}$; note that, for $\sharp_{\delta_1,\delta_2}\in\left\{\left(\delta_1 ; \delta_1\right),\, \left(\delta_2 ; \delta_2\right),\, \left(\delta_1 , \delta_2 ; \delta_1\delta_2\right),\, \left(\delta_1\delta_2 ; \delta_1 , \delta_2\right)\right\}$, one has $H^\bullet_{\sharp_{\delta_1,\delta_2}}\left(A^\bullet\right)=H^\bullet_{\sharp_{\delta_1,-\delta_2}}\left(A^\bullet\right)$.)

\begin{rem}\label{rem:z-grad}
 Note that $H_{\left( \delta_1 + \delta_2 ; \delta_1 + \delta_2 \right)}\left(A^\bullet\right)$ admits a $\left(\left. \Z \middle\slash \left(\hat\delta_1-\hat\delta_2\right)\Z\right.\right)$-graduation;
 in particular, if $\hat\delta_1 = \hat\delta_2$, then $H^\bullet_{\left( \delta_1 + \delta_2 ; \delta_1 + \delta_2 \right)}\left(A^\bullet\right)$ is actually a $\Z$-graded $\K$-vector space.
\end{rem}

\begin{rem}\label{rem:z2-grad}
 Note that, for $\sharp \in \left\{ \left( \delta_1 , \delta_2 ; \delta_1\delta_2 \right) ,\, \left( \delta_1 ; \delta_1 \right) ,\, \left( \delta_2 ; \delta_2 \right) ,\, \left( \delta_1\delta_2 ; \delta_1 , \delta_2 \right) \right\}$, if $A^{\bullet,\bullet}$ is actually $\Z^2$-graded, then $H^\bullet_{\sharp}\left(A^\bullet\right)$ admits a $\Z^2$-graduation such that $\Tot^\bullet H^{\bullet,\bullet}_{\sharp}\left(A^{\bullet,\bullet}\right)=H^\bullet_{\sharp}\left(\Tot^\bullet A^{\bullet,\bullet}\right)$. Furthermore, for $\delta_1 \in \End^{\hat\delta_{1,1}, \hat\delta_{1,2}}\left(A^{\bullet,\bullet}\right)$ and $\delta_2 \in \End^{\hat\delta_{2,1}, \hat\delta_{2,2}}\left(A^{\bullet,\bullet}\right)$, one has that $H_{\left( \delta_1 + \delta_2 ; \delta_1 + \delta_2 \right)}\left(\Tot A^\bullet\right)$ admits a $\left(\left(\left. \Z \middle\slash \left(\hat\delta_{1,1}-\hat\delta_{2,1}\right)\Z\right.\right) \times \left(\left. \Z \middle\slash \left(\hat\delta_{1,2}-\hat\delta_{
2,2}\right)\Z\right.\right)\right)$-graduation; in particular, if $\hat\delta_{1,1}=\hat\delta_{2,1}$ and $\hat\delta_{1,2}=\hat\delta_{2,2}$, then $H_{\left( \delta_1 + \delta_2 ; \delta_1 + \delta_2 \right)}\left(\Tot A^\bullet\right)$ is actually $\Z^2$-graded.
\end{rem}

Since $\ker\delta_1\cap\ker\delta_2 \subseteq \ker\left(\delta_1\pm\delta_2\right)$ and $\imm\delta_1\delta_2\subseteq \imm\left(\delta_1\pm\delta_2\right)$ for $\pm\in\{+,-\}$, and $\ker\delta_1\cap\ker\delta_2 \subseteq \ker\delta_1$ and $\imm\delta_1\delta_2\subseteq \imm\delta_1$, and $\ker\delta_1\cap\ker\delta_2 \subseteq \ker\delta_2$ and $\imm\delta_1\delta_2\subseteq \imm\delta_2$, and $\ker\left(\delta_1\pm\delta_2\right)\subseteq\ker\delta_1\delta_2$ and $\imm\left(\delta_1\pm\delta_2\right)\subseteq\imm\delta_1+\imm\delta_2$ for $\pm\in\{+,-\}$, and $\ker\delta_1\subseteq\ker\delta_1\delta_2$ and $\imm\delta_1\subseteq\imm\delta_1+\imm\delta_2$, and $\ker\delta_2\subseteq\ker\delta_1\delta_2$ and $\imm\delta_2\subseteq\imm\delta_1+\imm\delta_2$, then the identity map induces natural morphisms of (possibly $\Z$-graded, possibly $\Z^2$-graded) $\K$-vector spaces
\begin{footnotesize}
$$
\xymatrix{
&& H^\bullet_{\left( \delta_1 , \delta_2 ; \delta_1\delta_2 \right)}\left(A^\bullet\right) \ar@/_1.5pc/[lld] \ar@/_1pc/[ld] \ar@/^1pc/[rd] \ar@/^1.5pc/[rrd] \ar[dd] && \\
H^\bullet_{\left( \delta_1 ; \delta_1 \right)}\left(A^\bullet\right) \ar@/_1.5pc/[rrd] & H_{\left( \delta_1+\delta_2 ; \delta_1+\delta_2 \right)}\left(\Tot A^\bullet\right) \ar@/_1pc/[rd] & & H_{\left( \delta_1-\delta_2 ; \delta_1-\delta_2 \right)}\left(\Tot A^\bullet\right) \ar@/^1pc/[ld] & H^\bullet_{\left( \delta_2 ; \delta_2 \right)}\left(A^\bullet\right) \ar@/^1.5pc/[lld] \\
&& H^\bullet_{\left( \delta_1\delta_2 ; \delta_1 , \delta_2 \right)}\left(A^\bullet\right) &&
}
$$
\end{footnotesize}

(As a matter of notation, by writing, for example, $H^\bullet_{\left( \delta_1 , \delta_2 ; \delta_1\delta_2 \right)}\left(A^\bullet\right) \to H_{\left( \delta_1+\delta_2 ; \delta_1+\delta_2 \right)}\left(\Tot A^\bullet\right)$, we mean $\Tot H^\bullet_{\left( \delta_1 , \delta_2 ; \delta_1\delta_2 \right)}\left(A^\bullet\right) \to H_{\left( \delta_1+\delta_2 ; \delta_1+\delta_2 \right)}\left(\Tot A^\bullet\right)$.)

\subsection{$\delta_1\delta_2$-Lemma}

Let $A^\bullet$ be a $\Z$-graded $\mathbb{K}$-vector space endowed with two endomorphisms $\delta_1 \in \End^{\hat\delta_1}\left(A^\bullet\right)$ and $\delta_2 \in \End^{\hat\delta_2}\left(A^\bullet\right)$ such that $\delta_1^2 = \delta_2^2 = \delta_1\delta_2+\delta_2\delta_1 = 0$, and consider the cohomologies introduced in \S\ref{subsec:cohom-complexes}.
In general, the natural maps induced by the identity between such cohomologies are neither injective nor surjective: the following definition, \cite{deligne-griffiths-morgan-sullivan}, points out when they are actually isomorphisms.

\begin{defi}[{\cite{deligne-griffiths-morgan-sullivan}}]
 A $\Z$-graded $\mathbb{K}$-vector space $A^\bullet$ endowed with two endomorphisms $\delta_1 \in \End^{\hat\delta_1}\left(A^\bullet\right)$ and $\delta_2 \in \End^{\hat\delta_2}\left(A^\bullet\right)$ such that $\delta_1^2 = \delta_2^2 = \delta_1\delta_2+\delta_2\delta_1 = 0$ is said to satisfy the \emph{$\delta_1\delta_2$-Lemma} if and only if
 $$ \ker\delta_1 \cap \ker \delta_2 \cap \left(\imm\delta_1 + \imm\delta_2\right) \;=\; \imm\delta_1\delta_2 \;,$$
 namely, if and only if the natural map $H^\bullet_{\left( \delta_1 , \delta_2 ; \delta_1\delta_2 \right)}\left(A^\bullet\right) \to H^\bullet_{\left( \delta_1\delta_2 ; \delta_1 , \delta_2 \right)}\left(A^\bullet\right)$ induced by the identity is injective.

 A $\Z^2$-graded $\mathbb{K}$-vector space $A^{\bullet,\bullet}$ endowed with two endomorphisms $\delta_1 \in \End^{\hat\delta_{1,1}, \hat\delta_{1,2}}\left(A^{\bullet,\bullet}\right)$ and $\delta_2 \in \End^{\hat\delta_{2,1}, \hat\delta_{2,2}}\left(A^{\bullet,\bullet}\right)$ such that $\delta_1^2 = \delta_2^2 = \delta_1\delta_2+\delta_2\delta_1 = 0$ is said to satisfy the \emph{$\delta_1\delta_2$-Lemma} if and only if $\Tot^\bullet\left(A^{\bullet,\bullet}\right)$ satisfies the $\delta_1\delta_2$-Lemma.
\end{defi}

We recall the following result, which provides further characterizations of the validity of the $\delta_1\delta_2$-Lemma.
(Note that, according to Remark \ref{rem:z-grad} and Remark \ref{rem:z2-grad}, the natural maps induced by the identity in Lemma \ref{lemma:equiv} are maps of possibly $\Z$-graded, possibly $\Z^2$-graded $\K$-vector spaces.)

\begin{lemma}[{see \cite[Lemma 5.15]{deligne-griffiths-morgan-sullivan}}]\label{lemma:equiv}
 Let $A^\bullet$ be a $\Z$-graded $\mathbb{K}$-vector space endowed with two endomorphisms $\delta_1 \in \End^{\hat\delta_1}\left(A^\bullet\right)$ and $\delta_2 \in \End^{\hat\delta_2}\left(A^\bullet\right)$ such that $\delta_1^2 = \delta_2^2 = \delta_1\delta_2+\delta_2\delta_1 = 0$. The following conditions are equivalent:
 \begin{enumerate}
  \item\label{item:equiv-1} $A^\bullet$ satisfies the $\delta_1\delta_2$-Lemma, namely, the natural map $H^\bullet_{\left( \delta_1 , \delta_2 ; \delta_1\delta_2 \right)}\left(A^\bullet\right) \to H^\bullet_{\left( \delta_1\delta_2 ; \delta_1 , \delta_2 \right)}\left(A^\bullet\right)$ induced by the identity is injective;
  \item\label{item:equiv-2} the natural map $H^\bullet_{\left( \delta_1 , \delta_2 ; \delta_1\delta_2 \right)}\left(A^\bullet\right) \to H^\bullet_{\left( \delta_1\delta_2 ; \delta_1 , \delta_2 \right)}\left(A^\bullet\right)$ induced by the identity is surjective;
  \item\label{item:equiv-3} both the natural map $H^\bullet_{\left( \delta_1 , \delta_2 ; \delta_1\delta_2 \right)}\left(A^\bullet\right) \to H^\bullet_{\left( \delta_1 ; \delta_1 \right)}\left(A^\bullet\right)$ induced by the identity and the natural map $H^\bullet_{\left( \delta_1 , \delta_2 ; \delta_1\delta_2 \right)}\left(A^\bullet\right) \to H^\bullet_{\left( \delta_2 ; \delta_2 \right)}\left(A^\bullet\right)$ induced by the identity are injective;
  \item\label{item:equiv-4} both the natural map $H^\bullet_{\left( \delta_1 ; \delta_1 \right)}\left(A^\bullet\right) \to H^\bullet_{\left( \delta_1\delta_2 ; \delta_1 , \delta_2 \right)}\left(A^\bullet\right)$ induced by the identity and the natural map $H^\bullet_{\left( \delta_2 ; \delta_2 \right)}\left(A^\bullet\right) \to H^\bullet_{\left( \delta_1\delta_2 ; \delta_1 , \delta_2 \right)}\left(A^\bullet\right)$ induced by the identity are surjective.
 \end{enumerate}

 Furthermore, suppose that the $\K$-vector space $\ker\delta_1\delta_2$ admits a $\Z$-graduation
 $$ \ker\delta_1\delta_2 \;=\; \bigoplus_{\ell\in\Z} \left(\ker\delta_1\delta_2 \cap \tilde A^\ell\right) $$
 with respect to which $\ker\left(\delta_1 \pm \delta_2\right) \cap \tilde A^\bullet = \left(\ker\delta_1\cap\ker\delta_2\right) \cap \tilde A^\bullet$. (For example, if $\hat\delta_1\neq\hat\delta_2$, then take the $\Z$-graduation given by $A^\bullet$. For example, if $A^{\bullet,\bullet}$ is actually $\Z^2$-graded and $\delta_1\in\End^{\hat\delta_{1,1},\hat\delta_{1,2}}\left(A^{\bullet,\bullet}\right)$ and $\delta_2\in\End^{\hat\delta_{2,1},\hat\delta_{2,2}}\left(A^{\bullet,\bullet}\right)$ with $\left(\hat\delta_{1,1},\hat\delta_{1,2}\right)\neq \left(\hat\delta_{2,1},\hat\delta_{2,2}\right)$, then take the $\Z$-graduation induced by the $\Z^2$-graduation of $A^{\bullet,\bullet}$ by means of a chosen bijection $\Z\stackrel{\simeq}{\to}\Z^2$.) Then the previous conditions are equivalent to each of the following:
 \begin{enumerate}\setcounter{enumi}{4}
  \item\label{item:equiv-5} the natural map $\Tot H^\bullet_{\left( \delta_1 , \delta_2 ; \delta_1\delta_2 \right)}\left(A^\bullet\right) \to H_{\left( \delta_1+\delta_2 ; \delta_1+\delta_2 \right)}\left(\Tot A^\bullet\right)$ induced by the identity is injective;
  \item\label{item:equiv-6} the natural map $H_{\left( \delta_1+\delta_2 ; \delta_1+\delta_2 \right)}\left(\Tot A^\bullet\right) \to \Tot H^\bullet_{\left( \delta_1\delta_2 ; \delta_1 , \delta_2 \right)}\left(A^\bullet\right)$ induced by the identity is surjective;
\item\label{item:equiv-7} the natural map $\Tot H^\bullet_{\left( \delta_1 , \delta_2 ; \delta_1\delta_2 \right)}\left(A^\bullet\right) \to H_{\left( \delta_1-\delta_2 ; \delta_1-\delta_2 \right)}\left(\Tot A^\bullet\right)$ induced by the identity is injective;
  \item\label{item:equiv-8} the natural map $H_{\left( \delta_1-\delta_2 ; \delta_1-\delta_2 \right)}\left(\Tot A^\bullet\right) \to \Tot H^\bullet_{\left( \delta_1\delta_2 ; \delta_1 , \delta_2 \right)}\left(A^\bullet\right)$ induced by the identity is surjective.
 \end{enumerate}
\end{lemma}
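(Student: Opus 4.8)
The plan is to separate the argument into two parts: the equivalence of (\ref{item:equiv-1})--(\ref{item:equiv-4}), which is a pure diagram chase in the spirit of \cite[Lemma 5.15]{deligne-griffiths-morgan-sullivan}, and the total-complex conditions (\ref{item:equiv-5})--(\ref{item:equiv-8}), where the graduation hypothesis enters. Throughout, the only structural input is $\delta_1^2=\delta_2^2=0$ together with the anticommutation $\delta_1\delta_2=-\delta_2\delta_1$, which I use repeatedly to move a factor of $\delta_1$ past a factor of $\delta_2$; and the symmetry $\delta_2\mapsto-\delta_2$, which by the remark preceding Remark \ref{rem:z-grad} leaves all four cohomologies $H^\bullet_{\sharp}$ unchanged and halves the bookkeeping.

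First I would rewrite each condition as an inclusion between explicit subspaces: (\ref{item:equiv-1}) reads $\ker\delta_1\cap\ker\delta_2\cap(\imm\delta_1+\imm\delta_2)\subseteq\imm\delta_1\delta_2$; (\ref{item:equiv-3}) is the pair $\ker\delta_1\cap\ker\delta_2\cap\imm\delta_i\subseteq\imm\delta_1\delta_2$ for $i\in\{1,2\}$; (\ref{item:equiv-2}) reads $\ker\delta_1\delta_2=(\ker\delta_1\cap\ker\delta_2)+\imm\delta_1+\imm\delta_2$; and (\ref{item:equiv-4}) is the pair $\ker\delta_1\delta_2=\ker\delta_i+\imm\delta_{3-i}$. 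I would then close the cycle $(\ref{item:equiv-1})\Rightarrow(\ref{item:equiv-3})\Rightarrow(\ref{item:equiv-4})\Rightarrow(\ref{item:equiv-1})$ and adjoin $(\ref{item:equiv-2})\Leftrightarrow(\ref{item:equiv-4})$. The implications $(\ref{item:equiv-1})\Rightarrow(\ref{item:equiv-3})$ and $(\ref{item:equiv-2})\Rightarrow(\ref{item:equiv-4})$ are immediate from $\imm\delta_i\subseteq\imm\delta_1+\imm\delta_2$ and $\ker\delta_1\cap\ker\delta_2\subseteq\ker\delta_i$. The substantial steps are those that manufacture a factor $\delta_1\delta_2$: for $(\ref{item:equiv-3})\Rightarrow(\ref{item:equiv-4})$, given $a\in\ker\delta_1\delta_2$ one has $\delta_2 a\in\ker\delta_1\cap\ker\delta_2\cap\imm\delta_2$, whence $\delta_2 a=\delta_1\delta_2 w$ by (\ref{item:equiv-3}); then $\delta_2(a+\delta_1 w)=0$, so $a\in\ker\delta_2+\imm\delta_1$. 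The remaining implications $(\ref{item:equiv-4})\Rightarrow(\ref{item:equiv-2})$ and $(\ref{item:equiv-4})\Rightarrow(\ref{item:equiv-1})$ follow the same template: use (\ref{item:equiv-4}) to write each given element as $c+\imm\delta_i$ with $c\in\ker\delta_{3-i}$, observe that $c$ then lies in both kernels, and absorb the leftover boundary terms into $\imm\delta_1\delta_2$ by applying (\ref{item:equiv-4}) once more.

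For the total-complex conditions I would first reduce (\ref{item:equiv-7}), (\ref{item:equiv-8}) to (\ref{item:equiv-5}), (\ref{item:equiv-6}) applied to the pair $(\delta_1,-\delta_2)$, using the sign-insensitivity of $H^\bullet_{\sharp}$. For (\ref{item:equiv-5}) and (\ref{item:equiv-6}) the key is the factorisation of the canonical map through the total cohomology,
$$\Tot H^\bullet_{\left(\delta_1,\delta_2;\delta_1\delta_2\right)}(A^\bullet)\xrightarrow{\;\iota_+\;} H_{\left(\delta_1+\delta_2;\delta_1+\delta_2\right)}(\Tot A^\bullet)\xrightarrow{\;\pi_+\;}\Tot H^\bullet_{\left(\delta_1\delta_2;\delta_1,\delta_2\right)}(A^\bullet),$$
all three maps being induced by the identity; well-definedness uses $\imm\delta_1\delta_2\subseteq\imm(\delta_1+\delta_2)$, since $\delta_1\delta_2 u=(\delta_1+\delta_2)\delta_2 u$, together with $\ker(\delta_1+\delta_2)\subseteq\ker\delta_1\delta_2$. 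By the first part, the composite $\pi_+\circ\iota_+$ is injective (resp. surjective) exactly when the $\delta_1\delta_2$-Lemma holds, so the forward directions are formal: a composite is injective only if its first factor is, and surjective only if its second factor is, giving $\delta_1\delta_2\text{-Lemma}\Rightarrow(\ref{item:equiv-5}),(\ref{item:equiv-6})$.

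The crux, and the step I expect to be the main obstacle, is the converse: recovering the $\delta_1\delta_2$-Lemma from injectivity of $\iota_+$ (resp. surjectivity of $\pi_+$). The difficulty is that a priori $\ker(\delta_1+\delta_2)$ is strictly larger than $\ker\delta_1\cap\ker\delta_2$ while $\imm(\delta_1+\delta_2)$ is strictly smaller than $\imm\delta_1+\imm\delta_2$, so the total cohomology records less than the Bott--Chern/Aeppli data; reconciling the two notions of image is the heart of the matter. This is exactly what the graduation hypothesis repairs: by assumption $\ker(\delta_1+\delta_2)\cap\tilde A^\ell=\ker\delta_1\cap\ker\delta_2\cap\tilde A^\ell$ for each $\ell$, so homogeneous $(\delta_1+\delta_2)$-cocycles are genuine Bott--Chern cocycles, and since $\ker\delta_1\delta_2=\bigoplus_\ell(\ker\delta_1\delta_2\cap\tilde A^\ell)$ every cocycle has finite support in the grading. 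I would therefore run a descending induction on the top degree of the $\tilde A$-support of a given $(\delta_1+\delta_2)$-cocycle, using the graduation relation to recognise its top homogeneous component as an element of $\ker\delta_1\cap\ker\delta_2$ and then subtracting a $(\delta_1+\delta_2)$-boundary to strictly lower the support. The delicate bookkeeping is to keep the correction terms inside $\imm\delta_1+\imm\delta_2$ (so as to land precisely in condition (\ref{item:equiv-2})) while decreasing the support, and to guarantee termination; finiteness of the support, furnished by the direct-sum graduation, is what makes the induction close.
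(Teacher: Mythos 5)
Your handling of the equivalences \eqref{item:equiv-1}--\eqref{item:equiv-4} is correct and is essentially the paper's own diagram chase: the paper closes the cycle as \eqref{item:equiv-1}$\Rightarrow$\eqref{item:equiv-3}$\Rightarrow$\eqref{item:equiv-4}$\Rightarrow$\eqref{item:equiv-2}$\Rightarrow$\eqref{item:equiv-1}, and your rearrangement, including the two-step application of \eqref{item:equiv-4} that yields \eqref{item:equiv-2}, is the same computation (and in fact makes explicit the set-theoretic identity the paper only asserts). The reduction of \eqref{item:equiv-7}, \eqref{item:equiv-8} to \eqref{item:equiv-5}, \eqref{item:equiv-6} via $\delta_2\mapsto-\delta_2$, and the forward implications obtained by factoring the natural map $\Tot H^\bullet_{\left(\delta_1,\delta_2;\delta_1\delta_2\right)}\left(A^\bullet\right)\to\Tot H^\bullet_{\left(\delta_1\delta_2;\delta_1,\delta_2\right)}\left(A^\bullet\right)$ through $H_{\left(\delta_1+\delta_2;\delta_1+\delta_2\right)}\left(\Tot A^\bullet\right)$, are also fine and match the paper's \eqref{item:equiv-1}$\Rightarrow$\eqref{item:equiv-5}.

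The converse, which you correctly identify as the crux, is where your plan has a genuine gap, in two places. First, the key step of your induction --- ``using the graduation relation to recognise the top homogeneous component \emph{as an element of} $\ker\delta_1\cap\ker\delta_2$'' --- is unjustified and false in general: the hypothesis $\ker\left(\delta_1\pm\delta_2\right)\cap\tilde A^\bullet=\left(\ker\delta_1\cap\ker\delta_2\right)\cap\tilde A^\bullet$ constrains only elements that are simultaneously homogeneous \emph{and} $\left(\delta_1\pm\delta_2\right)$-closed, whereas the homogeneous components of an inhomogeneous cocycle $k=\sum_j k_j$ are not themselves cocycles: with $\delta_1,\delta_2$ homogeneous of degrees $\hat\delta_1\neq\hat\delta_2$ one only gets the ladder relations $\delta_1 k_j=-\delta_2 k_{j+\hat\delta_1-\hat\delta_2}$, so each extreme component is closed for just one of the two operators. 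Worse, as sketched your induction never invokes the assumed injectivity of $\iota_+$ or surjectivity of $\pi_+$; since the graduation hypothesis is automatically satisfied by every bounded double complex (the second example in the statement), an argument running on the graduation alone would prove condition \eqref{item:equiv-2} --- hence the $\del\delbar$-Lemma --- for every compact complex manifold, which the Iwasawa manifold contradicts. The correct use of the grading is different: from \eqref{item:equiv-6} one writes a \emph{homogeneous} $x\in\ker\delta_1\delta_2\cap\tilde A^\ell$ as $x=k+m$ with $k\in\ker\left(\delta_1\pm\delta_2\right)$ and $m\in\imm\delta_1+\imm\delta_2$, and uses the inclusion $\left(\ker\left(\delta_1\pm\delta_2\right)+\imm\delta_1+\imm\delta_2\right)\cap\tilde A^\ell\subseteq\left(\ker\left(\delta_1\pm\delta_2\right)\cap\tilde A^\ell\right)+\imm\delta_1+\imm\delta_2$ degreewise (here the decomposition is applied to the $\imm\delta_1+\imm\delta_2$-part supplied by \eqref{item:equiv-6}, not to the components of an arbitrary cocycle), after which the graduation relation applies to the now-homogeneous cocycle part; no induction on support is needed. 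Second, your proposal offers no route at all from \eqref{item:equiv-5} back to the Lemma: the described induction only addresses the surjectivity side. The paper supplies the missing, purely algebraic step \eqref{item:equiv-5}$\Rightarrow$\eqref{item:equiv-6}, valid without any grading: for $x\in\ker\delta_1\delta_2$ one has $\left(\delta_1\pm\delta_2\right)(x)\in\ker\delta_1\cap\ker\delta_2\cap\imm\left(\delta_1\pm\delta_2\right)=\imm\delta_1\delta_2$, say $\left(\delta_1\pm\delta_2\right)(x)=\delta_1\delta_2(z)$, and the identity $\left(\delta_1\pm\delta_2\right)\left(x\pm\frac{1}{2}\,\delta_1(z)-\frac{1}{2}\,\delta_2(z)\right)=0$ exhibits $x\in\ker\left(\delta_1\pm\delta_2\right)+\imm\delta_1+\imm\delta_2$. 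Your plan needs both repairs: this $\frac{1}{2}$-correction trick to funnel \eqref{item:equiv-5} into \eqref{item:equiv-6}, and the degreewise decomposition argument (in place of the support induction) for \eqref{item:equiv-6}$\Rightarrow$\eqref{item:equiv-2}.
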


\begin{proof}
For the sake of completeness, we recall here the proof in \cite{deligne-griffiths-morgan-sullivan}.

\paragrafoo{\eqref{item:equiv-1} $\Rightarrow$ \eqref{item:equiv-3}} By the hypothesis, $\ker\delta_1 \cap \ker\delta_2 \cap \left(\imm\delta_1 + \imm\delta_2\right) = \imm\delta_1\delta_2$, and we have to prove that $\ker\delta_2 \cap \imm\delta_1 \subseteq \imm\delta_1\delta_2$ and $\ker\delta_1 \cap \imm\delta_2 \subseteq \imm\delta_1\delta_2$. Since $\imm\delta_1 \subseteq \imm\delta_1 + \imm\delta_2$ and $\imm\delta_2 \subseteq \imm\delta_1 + \imm\delta_2$, one gets immediately that the natural maps $H^\bullet_{\left( \delta_1 , \delta_2 ; \delta_1\delta_2 \right)}\left(A^\bullet\right) \to H^\bullet_{\left( \delta_1 ; \delta_1 \right)}\left(A^\bullet\right)$ and $H^\bullet_{\left( \delta_1 , \delta_2 ; \delta_1\delta_2 \right)}\left(A^\bullet\right) \to H^\bullet_{\left( \delta_2 ; \delta_2 \right)}\left(A^\bullet\right)$ are injective.

\paragrafoo{\eqref{item:equiv-3} $\Rightarrow$ \eqref{item:equiv-4}} By the hypotheses, we have that $\ker\delta_2 \cap \imm\delta_1 = \imm\delta_1\delta_2$ and $\ker\delta_1 \cap \imm\delta_2 = \imm\delta_1\delta_2$, and we have to prove that $\ker\delta_1 + \imm\delta_2 \supseteq \ker\delta_1\delta_2$ and $\ker\delta_2 + \imm\delta_1 \supseteq \ker\delta_1\delta_2$. Let $x\in\ker\delta_1\delta_2$. Then $\delta_1(x) \in \ker\delta_2 \cap \imm\delta_1 = \imm\delta_1\delta_2$: let $y \in A^{\bullet}$ be such that $\delta_1(x) = \delta_1\delta_2(y)$. Then $x = \left(x - \delta_2(y)\right) + \delta_2(y) \in \ker\delta_1 + \imm\delta_2$, since $\delta_1\left(x - \delta_2(y)\right) = 0$; it follows that the natural map $H^\bullet_{\left( \delta_1 ; \delta_1 \right)}\left(A^\bullet\right) \to H^\bullet_{\left( \delta_1\delta_2 ; \delta_1 , \delta_2 \right)}\left(A^\bullet\right)$ is surjective. Analogously, $\delta_2(x) \in \ker\delta_1 \cap \imm\delta_2 = \imm\delta_1\delta_2$: let $z$ be such that $\delta_2(x) = 
\delta_1\delta_
2(z)$. Then $x = \left(x + \delta_1(z)\right) - \delta_1(z) \in \ker\delta_2 + \imm\delta_1$, since $\delta_2\left(x + \delta_1(z)\right) = 0$; it follows that the natural map $H^\bullet_{\left( \delta_2 ; \delta_2 \right)}\left(A^\bullet\right) \to H^\bullet_{\left( \delta_1\delta_2 ; \delta_1 , \delta_2 \right)}\left(A^\bullet\right)$ is surjective.

\paragrafoo{\eqref{item:equiv-4} $\Rightarrow$ \eqref{item:equiv-2}} By the hypothesis, $\ker\delta_1 + \imm\delta_2 = \ker\delta_1\delta_2$ and $\ker\delta_2 + \imm\delta_1 = \ker\delta_1\delta_2$, and we have to prove that $\left(\ker\delta_1 \cap \ker\delta_2\right) + \imm\delta_1 + \imm\delta_2 \supseteq \ker\delta_1\delta_2$. Since $\ker\delta_1\delta_2 = \left(\ker\delta_1 + \imm\delta_2\right) \cap \left(\ker\delta_2 + \imm\delta_1\right) \subseteq \left(\ker\delta_1 \cap \ker\delta_2\right) + \imm\delta_1 + \imm\delta_2$, one gets that the natural map $H^\bullet_{\left( \delta_1 , \delta_2 ; \delta_1\delta_2 \right)}\left(A^\bullet\right) \to H^\bullet_{\left( \delta_1\delta_2 ; \delta_1 , \delta_2 \right)}\left(A^\bullet\right)$ is surjective.

\paragrafoo{\eqref{item:equiv-2} $\Rightarrow$ \eqref{item:equiv-1}} By the hypothesis, $\left(\ker\delta_1 \cap \ker\delta_2\right) + \imm\delta_1 + \imm\delta_2 = \ker\delta_1\delta_2$, and we have to prove that $\ker \delta_1 \cap \ker\delta_2 \cap \left(\imm\delta_1 + \imm\delta_2\right) \subseteq \imm\delta_1\delta_2$. Let $x :=: \delta_1(y) + \delta_2(z) \in \ker\delta_1 \cap \ker\delta_2 \cap \left(\imm\delta_1 + \imm\delta_2\right)$. Therefore $y \in \ker\delta_1\delta_2 = \left(\ker\delta_1 \cap \ker\delta_2\right) + \imm\delta_1 + \imm\delta_2$ and $z \in \ker\delta_1\delta_2 = \left(\ker\delta_1 \cap \ker\delta_2\right) + \imm\delta_1 + \imm\delta_2$. It follows that $\delta_1(y) \in \imm\delta_1\delta_2$ and $\delta_2(z) \in \imm\delta_1\delta_2$, and hence $x = \delta_1(y) + \delta_2(z) \in \imm\delta_1\delta_2$, proving that the natural map $H^\bullet_{\left( \delta_1 , \delta_2 ; \delta_1\delta_2 \right)}\left(A^\bullet\right) \to H^\bullet_{\left( \delta_1\delta_
2 ; \delta_1 , \delta_2 \right)}\left(A^\bullet\right)$ is injective.

\paragrafoo{\eqref{item:equiv-1} $\Rightarrow$ \eqref{item:equiv-5}, and \eqref{item:equiv-1} $\Rightarrow$ \eqref{item:equiv-7}} By the hypothesis, $\ker\delta_1 \cap \ker\delta_2 \cap \left(\imm\delta_1 + \imm\delta_2\right) = \imm\delta_1\delta_2$, and we have to prove that $\ker\delta_1 \cap \ker\delta_2 \cap \imm\left(\delta_1 \pm \delta_2\right) \subseteq \imm\delta_1\delta_2$ for $\pm\in\{+,-\}$. Since $\ker\delta_1 \cap \ker\delta_2 \cap \imm\left(\delta_1 \pm \delta_2\right) \subseteq \ker\delta_1 \cap \ker\delta_2 \cap \left(\imm\delta_1 + \imm\delta_2\right)$, one gets immediately that the natural map $\Tot H^\bullet_{\left( \delta_1 , \delta_2 ; \delta_1\delta_2 \right)}\left(A^\bullet\right) \to H_{\left( \delta_1 \pm \delta_2 ; \delta_1 \pm \delta_2 \right)}\left(A^\bullet\right)$ is injective.

\paragrafoo{\eqref{item:equiv-5} $\Rightarrow$ \eqref{item:equiv-6}, and \eqref{item:equiv-7} $\Rightarrow$ \eqref{item:equiv-8}} Fix $\pm\in\{+,-\}$. By the hypothesis, $\ker\delta_1 \cap \ker\delta_2 \cap \imm \left(\delta_1 \pm \delta_2\right) = \imm\delta_1\delta_2$, and we have to prove that $\ker\left( \delta_1 \pm \delta_2 \right) + \imm\delta_1 + \imm\delta_2 \supseteq \ker\delta_1\delta_2$. 
Let $x\in \ker\delta_1\delta_2$. Then $\left(\delta_1 \pm \delta_2\right)(x) \in \ker\delta_1 \cap \ker\delta_2 \cap \imm\left(\delta_1 \pm \delta_2\right) = \imm\delta_1\delta_2$; let $z\in\Tot A^{\bullet}$ be such that $\left(\delta_1 \pm \delta_2\right)(x) = \delta_1\delta_2(z)$. Since $\left(\delta_1 \pm \delta_2\right) \left(x \pm \frac{1}{2}\, \delta_1(z) - \frac{1}{2}\, \delta_2(z)\right) = 0$, one gets that $x = \left(x \pm \frac{1}{2}\, \delta_1(z) - \frac{1}{2}\, \delta_2(z)\right) - \left(\pm \frac{1}{2}\, \delta_1(z)\right) + \frac{1}{2}\, \delta_2(z) \in \ker\left( \delta_1 \pm \delta_2 \right) + \imm\delta_1 + \imm\delta_2$, proving that the natural map $H_{\left( \delta_1 \pm \delta_2 ; \delta_1 \pm \delta_2 \right)}\left(\Tot A^\bullet\right) \to \Tot H^\bullet_{\left( \delta_1\delta_2 ; \delta_1 , \delta_2 \right)}\left(A^\bullet\right)$ is surjective.

To conclude the equivalences, we assume the additional hypothesis given in the statement.

\paragrafoo{\eqref{item:equiv-6} $\Rightarrow$ \eqref{item:equiv-2}, and \eqref{item:equiv-8} $\Rightarrow$ \eqref{item:equiv-2}} Fix $\pm\in\{+,-\}$. By the hypothesis, $\ker\left( \delta_1 \pm \delta_2 \right) + \imm\delta_1 + \imm\delta_2 = \ker\delta_1\delta_2$, and we have to prove that $\left(\ker\delta_1 \cap \ker\delta_2\right) + \imm\delta_1 + \imm\delta_2 \supseteq \ker\delta_1\delta_2$. By the additional hypothesis, we have that $\ker\delta_1\delta_2$ admits a $\Z$-graduation $\ker\delta_1\delta_2 = \bigoplus_{\ell\in\Z} \left(\ker\delta_1\delta_2 \cap \tilde A^\ell\right)$ with respect to which $\ker\left(\delta_1 \pm \delta_2\right) \cap \tilde A^\bullet = \left(\ker\delta_1\cap\ker\delta_2\right) \cap \tilde A^\bullet$. Then one has that
\begin{eqnarray*}
\ker\delta_1\delta_2 &=& \bigoplus_{\ell\in\Z} \left(\ker\delta_1\delta_2 \cap \tilde A^\ell\right)
\;=\; \bigoplus_{\ell\in\Z} \left( \left(\ker\left( \delta_1 \pm \delta_2 \right) + \imm\delta_1 + \imm\delta_2\right) \cap \tilde A^\ell\right) \\[5pt]
&\subseteq& \bigoplus_{\ell\in\Z} \left( \left(\ker\left( \delta_1 \pm \delta_2 \right) \cap \tilde A^\ell\right) + \imm\delta_1 + \imm\delta_2\right)
\;=\; \bigoplus_{\ell\in\Z} \left( \left(\left(\ker \delta_1 \cap \delta_2 \right) \cap \tilde A^\ell\right) + \imm\delta_1 + \imm\delta_2\right) \\[5pt]
&\subseteq& \left(\ker \delta_1 \cap \delta_2 \right) + \imm\delta_1 + \imm\delta_2 \;,
\end{eqnarray*}
proving that the natural map $H_{\left( \delta_1 \pm \delta_2 ; \delta_1 \pm \delta_2 \right)}\left(\Tot A^\bullet\right) \to \Tot H^\bullet_{\left( \delta_1\delta_2 ; \delta_1 , \delta_2 \right)}\left(A^\bullet\right)$ is surjective.
\end{proof}

\medskip

By noting that, for $\sharp_{\delta_1,\delta_2}\in\left\{\delta_1,\, \delta_2,\, \delta_1\delta_2,\, \delta_1+\delta_2,\, \delta_1-\delta_2\right\}$,
$$ \left(\ker \sharp_{\delta_1\otimes_\K\id, \delta_2\otimes_\K\beta}\right)^{\bullet_1,\bullet_2} \;=\; \left(\ker \sharp_{\delta_1,\delta_2}\right)^{\hat\delta_1\, \bullet_1 + \hat\delta_2\, \bullet_2} \otimes_\K \K\, \beta^{\bullet_2} $$
and
$$ \left(\imm \sharp_{\delta_1\otimes_\K\id, \delta_2\otimes_\K\beta}\right)^{\bullet_1,\bullet_2} \;=\; \left(\imm \sharp_{\delta_1,\delta_2}\right)^{\hat\delta_1\, \bullet_1 + \hat\delta_2\, \bullet_2} \otimes_\K \K\, \beta^{\bullet_2} \;, $$
we get the following lemmata.

\begin{lemma}\label{lemma:cohom-a-doub}
 Let $A^\bullet$ be a $\Z$-graded $\mathbb{K}$-vector space endowed with two endomorphisms $\delta_1 \in \End^{\hat\delta_1}\left(A^\bullet\right)$ and $\delta_2 \in \End^{\hat\delta_2}\left(A^\bullet\right)$ such that $\delta_1^2 = \delta_2^2 = \delta_1\delta_2+\delta_2\delta_1 = 0$.
 Then, there are natural isomorphisms of $\K$-vector spaces
 $$ H^{\bullet_1,\bullet_2}_{\sharp_{\delta_1\otimes_\K\id, \delta_2\otimes_\K\beta}} \left(\Doub^{\bullet,\bullet}A^{\bullet}\right) \;\simeq\; \Doub^{\bullet_1,\bullet_2} H_{\sharp_{\delta_1,\delta_2}}^{\bullet}\left(A^\bullet\right) \;, $$
 where $\sharp_{\delta_1,\delta_2} \in \left\{ (\delta_1 , \delta_2 ; \delta_1\delta_2) ,\, (\delta_1 ; \delta_1) ,\, (\delta_2 ; \delta_2) ,\, (\delta_1\delta_2 ; \delta_1 , \delta_2) \right\}$.
\end{lemma}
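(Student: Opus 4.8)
The plan is to reduce the statement to the two identities recalled immediately before it --- for $\sharp_{\delta_1,\delta_2}\in\left\{\delta_1,\,\delta_2,\,\delta_1\delta_2\right\}$,
$$ \left(\ker \sharp_{\delta_1\otimes_\K\id, \delta_2\otimes_\K\beta}\right)^{\bullet_1,\bullet_2} \;=\; \left(\ker \sharp_{\delta_1,\delta_2}\right)^{\hat\delta_1\, \bullet_1 + \hat\delta_2\, \bullet_2} \otimes_\K \K\, \beta^{\bullet_2} $$
and its analogue for images --- together with the elementary fact that, for each fixed $\bullet_2$, the assignment $v\mapsto v\otimes_\K \beta^{\bullet_2}$ is a linear isomorphism from $A^{\hat\delta_1\,\bullet_1+\hat\delta_2\,\bullet_2}$ onto $\Doub^{\bullet_1,\bullet_2}\left(A^\bullet\right)$ which, $\K\,\beta^{\bullet_2}$ being $1$-dimensional, carries subspaces to subspaces preserving inclusions, finite intersections, and finite sums.

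First I would unwind both sides. For each of the four admissible $\sharp_{\delta_1,\delta_2} = \left(f_1,\ldots,f_r\,;\,g_1,\ldots,g_s\right)$, with $f_j,g_k\in\left\{\delta_1,\delta_2,\delta_1\delta_2\right\}$, the numerator of $H^{\bullet_1,\bullet_2}_{\sharp_{\delta_1\otimes_\K\id,\delta_2\otimes_\K\beta}}\left(\Doub^{\bullet,\bullet}A^\bullet\right)$ is $\bigcap_j\ker\left(f_j\right)_\otimes$ and its denominator is $\sum_k\imm\left(g_k\right)_\otimes$, where $\left(\delta_1\right)_\otimes := \delta_1\otimes_\K\id$, $\left(\delta_2\right)_\otimes := \delta_2\otimes_\K\beta$, and $\left(\delta_1\delta_2\right)_\otimes := \left(\delta_1\otimes_\K\id\right)\left(\delta_2\otimes_\K\beta\right) = \delta_1\delta_2\otimes_\K\beta$ are the corresponding single operators on the canonical double complex. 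Applying the recalled kernel identity to each $f_j$ separately, all the subspaces $\ker\left(f_j\right)_\otimes$ in bidegree $\left(\bullet_1,\bullet_2\right)$ are of the form $\left(\ker f_j\right)^{\hat\delta_1\,\bullet_1+\hat\delta_2\,\bullet_2}\otimes_\K\K\,\beta^{\bullet_2}$, hence all sit inside the one space $A^{\hat\delta_1\,\bullet_1+\hat\delta_2\,\bullet_2}\otimes_\K\K\,\beta^{\bullet_2}$; by the commutation of intersection with the identification above,
$$ \left(\bigcap_j \ker\left(f_j\right)_\otimes\right)^{\bullet_1,\bullet_2} \;=\; \left(\bigcap_j\ker f_j\right)^{\hat\delta_1\,\bullet_1+\hat\delta_2\,\bullet_2}\otimes_\K\K\,\beta^{\bullet_2} \;, $$
and symmetrically, from the image identity,
$$ \left(\sum_k\imm\left(g_k\right)_\otimes\right)^{\bullet_1,\bullet_2} \;=\; \left(\sum_k\imm g_k\right)^{\hat\delta_1\,\bullet_1+\hat\delta_2\,\bullet_2}\otimes_\K\K\,\beta^{\bullet_2} \;. $$

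Finally I would pass to the quotient. In every one of the four cases one has $\sum_k\imm g_k\subseteq\bigcap_j\ker f_j$ --- precisely the containment making $H^\bullet_{\sharp_{\delta_1,\delta_2}}\left(A^\bullet\right)$ well-defined --- so, since tensoring by the $1$-dimensional $\K\,\beta^{\bullet_2}$ is exact,
$$ H^{\bullet_1,\bullet_2}_{\sharp_{\delta_1\otimes_\K\id,\delta_2\otimes_\K\beta}}\left(\Doub^{\bullet,\bullet}A^\bullet\right) \;=\; \left(\frac{\bigcap_j\ker f_j}{\sum_k\imm g_k}\right)^{\hat\delta_1\,\bullet_1+\hat\delta_2\,\bullet_2}\otimes_\K\K\,\beta^{\bullet_2} \;=\; H^{\hat\delta_1\,\bullet_1+\hat\delta_2\,\bullet_2}_{\sharp_{\delta_1,\delta_2}}\left(A^\bullet\right)\otimes_\K\K\,\beta^{\bullet_2} \;, $$
which is $\Doub^{\bullet_1,\bullet_2}H^\bullet_{\sharp_{\delta_1,\delta_2}}\left(A^\bullet\right)$ by the definition of $\Doub$. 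This treats the four cases at once, the only case-dependent input being the two lists $\left\{f_j\right\}$ and $\left\{g_k\right\}$.

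I do not expect a genuine obstacle here; the one point deserving care is purely bookkeeping of gradings --- checking that for each listed single operator the recalled kernel and image identities apply in the correct bidegree, and that $\left(\delta_1\delta_2\right)_\otimes = \delta_1\delta_2\otimes_\K\beta$, so that the Bott-Chern numerator and the Aeppli denominator match up correctly with the single-operator identities. Naturality of the resulting isomorphism is then automatic, every step being induced by the single identification $v\mapsto v\otimes_\K\beta^{\bullet_2}$.
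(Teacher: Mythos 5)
Your proposal is correct and is exactly the paper's argument: the paper derives Lemma \ref{lemma:cohom-a-doub} immediately from the two displayed kernel/image identities for $\sharp_{\delta_1\otimes_\K\id,\delta_2\otimes_\K\beta}$ stated just before it, which is precisely your reduction, with the routine bookkeeping (intersections, sums, and quotients commuting with $\otimes_\K\K\,\beta^{\bullet_2}$ in each fixed bidegree) spelled out. No gaps; your identification $\left(\delta_1\otimes_\K\id\right)\left(\delta_2\otimes_\K\beta\right)=\delta_1\delta_2\otimes_\K\beta$ is the same single-operator input the paper uses for the Bott-Chern and Aeppli cases.
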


\begin{lemma}\label{lemma:deldelbar-lemma-a-doub}
 Let $A^\bullet$ be a $\Z$-graded $\mathbb{K}$-vector space endowed with two endomorphisms $\delta_1 \in \End^{\hat\delta_1}\left(A^\bullet\right)$ and $\delta_2 \in \End^{\hat\delta_2}\left(A^\bullet\right)$ such that $\delta_1^2 = \delta_2^2 = \delta_1\delta_2+\delta_2\delta_1 = 0$. Denote the greatest common divisor of $\hat\delta_1$ and $\hat\delta_2$ by $\GCD{\hat\delta_1}{\hat\delta_2}$.
 The following conditions are equivalent:
 \begin{enumerate}
  \item\label{item:deldelbar-lemma-ab-1} $A^{\GCD{\hat\delta_1}{\hat\delta_2} \, \bullet}$ satisfies the $\delta_1\delta_2$-Lemma;
  \item\label{item:deldelbar-lemma-ab-2} $\Doub^{\bullet,\bullet}\left(A^{\bullet}\right)$ satisfies the $\left(\delta_1\otimes_\K\id\right)\left(\delta_2\otimes_\K\beta\right)$-Lemma.
 \end{enumerate}
\end{lemma}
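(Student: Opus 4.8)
The plan is to test the $\delta_1\delta_2$-Lemma for $\Doub^{\bullet,\bullet}\left(A^\bullet\right)$ one bidegree at a time, and to transport each such local condition back to a single graded condition on $A^\bullet$ by means of the kernel and image identities recorded immediately before the statement. Write $D_1 := \delta_1\otimes_\K\id$ and $D_2 := \delta_2\otimes_\K\beta$, and set $g := \GCD{\hat\delta_1}{\hat\delta_2}$. The first observation is that the $\delta_1\delta_2$-Lemma is, by definition, an equality of the shape $\ker\cap\ker\cap\left(\imm+\imm\right)=\imm(\sspace)$ between \emph{homogeneous} subspaces: since $\delta_1$, $\delta_2$, $\delta_1\delta_2$ (respectively $D_1$, $D_2$, $D_1D_2$) are bihomogeneous, all the kernels, images, sums, and intersections occurring are graded (respectively bigraded) subspaces, so the equality holds if and only if it holds in each degree. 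In particular, although the $\delta_1\delta_2$-Lemma for the $\Z^2$-graded $\Doub^{\bullet,\bullet}\left(A^\bullet\right)$ is defined through $\Tot^\bullet$, the underlying subspaces are still bigraded, and the condition refines to one equality per bidegree $(\bullet_1,\bullet_2)$.

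The core step is then a direct computation. Applying the identity $\left(\ker\sharp_{D_1,D_2}\right)^{\bullet_1,\bullet_2}=\left(\ker\sharp_{\delta_1,\delta_2}\right)^{\hat\delta_1\bullet_1+\hat\delta_2\bullet_2}\otimes_\K\K\beta^{\bullet_2}$ and its analogue for images, with $\sharp_{\delta_1,\delta_2}\in\left\{\delta_1,\delta_2,\delta_1\delta_2\right\}$, and using that forming sums and intersections of graded subspaces commutes with the grading while tensoring by the one-dimensional factor $\K\beta^{\bullet_2}$ is exact, the bidegree-$(\bullet_1,\bullet_2)$ instance of the Lemma for $\Doub^{\bullet,\bullet}\left(A^\bullet\right)$ becomes
$$\left(\ker\delta_1\cap\ker\delta_2\cap\left(\imm\delta_1+\imm\delta_2\right)\right)^{\hat\delta_1\bullet_1+\hat\delta_2\bullet_2}\otimes_\K\K\beta^{\bullet_2}=\left(\imm\delta_1\delta_2\right)^{\hat\delta_1\bullet_1+\hat\delta_2\bullet_2}\otimes_\K\K\beta^{\bullet_2}.$$
Cancelling the common one-dimensional tensor factor, this is exactly the degree-$\left(\hat\delta_1\bullet_1+\hat\delta_2\bullet_2\right)$ instance of the $\delta_1\delta_2$-Lemma on $A^\bullet$.

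To conclude I would invoke the arithmetic of the greatest common divisor: as $(\bullet_1,\bullet_2)$ ranges over $\Z^2$, the integer $\hat\delta_1\bullet_1+\hat\delta_2\bullet_2$ ranges over exactly $g\,\Z$. Hence the $\delta_1\delta_2$-Lemma for $\Doub^{\bullet,\bullet}\left(A^\bullet\right)$ is equivalent to the degree-$m$ instance of the $\delta_1\delta_2$-Lemma on $A^\bullet$ holding for every $m\in g\,\Z$. Since $\delta_1$ and $\delta_2$ shift degrees by the multiples $\hat\delta_1$ and $\hat\delta_2$ of $g$, they restrict to the graded subspace $A^{g\,\bullet}=\bigoplus_{n}A^{g\,n}$, on which kernels, images, sums, and intersections are precisely the degree-$g\,\Z$ parts of the corresponding subspaces of $A^\bullet$; therefore ``the degree-$m$ instance holds for all $m\in g\,\Z$'' is literally the assertion that $A^{\GCD{\hat\delta_1}{\hat\delta_2}\,\bullet}$ satisfies the $\delta_1\delta_2$-Lemma. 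This gives the equivalence of \eqref{item:deldelbar-lemma-ab-1} and \eqref{item:deldelbar-lemma-ab-2}.

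The routine part is the degreewise bookkeeping; the one point that genuinely needs care — and that explains the shape of the statement — is the \emph{surjectivity} half of the arithmetic step, namely that every multiple of $g$ is actually attained as $\hat\delta_1\bullet_1+\hat\delta_2\bullet_2$ for some $(\bullet_1,\bullet_2)\in\Z^2$. This is what guarantees that no degree in $g\,\Z$ is overlooked, and it is also the reason why condition \eqref{item:deldelbar-lemma-ab-1} must be imposed on $A^{\GCD{\hat\delta_1}{\hat\delta_2}\,\bullet}$ rather than on all of $A^\bullet$: the total degrees $\hat\delta_1\bullet_1+\hat\delta_2\bullet_2$ occurring in $\Doub^{\bullet,\bullet}\left(A^\bullet\right)$ never meet the residue classes of $A^\bullet$ modulo $g$ other than $0$, so those graded pieces of $A^\bullet$ are invisible to the double complex and play no role.
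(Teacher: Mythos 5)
Your proof is correct and follows essentially the same route as the paper's: both transport the subspace equality defining the $\delta_1\delta_2$-Lemma through the bidegree-wise kernel/image identities $\left(\ker \sharp_{\delta_1\otimes_\K\id,\, \delta_2\otimes_\K\beta}\right)^{\bullet_1,\bullet_2} = \left(\ker \sharp_{\delta_1,\delta_2}\right)^{\hat\delta_1\,\bullet_1+\hat\delta_2\,\bullet_2}\otimes_\K\K\,\beta^{\bullet_2}$ (and the analogue for images), the only cosmetic difference being that the paper phrases the Lemma via the equivalent equality $\ker\delta_1\cap\imm\delta_2=\imm\delta_1\delta_2$ of Lemma \ref{lemma:equiv}, while you work with the definitional equality $\ker\delta_1\cap\ker\delta_2\cap\left(\imm\delta_1+\imm\delta_2\right)=\imm\delta_1\delta_2$. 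Your explicit Bézout step --- that $\hat\delta_1\,\bullet_1+\hat\delta_2\,\bullet_2$ ranges over exactly $\GCD{\hat\delta_1}{\hat\delta_2}\,\Z$, which is why condition \eqref{item:deldelbar-lemma-ab-1} concerns $A^{\GCD{\hat\delta_1}{\hat\delta_2}\,\bullet}$ --- is left implicit in the paper's one-line proof, and spelling it out is a welcome clarification rather than a deviation.
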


\begin{proof}
Indeed,
$$ \left( \ker\left(\delta_1\otimes_\K\id\right) \cap \imm\left(\delta_2\otimes_\K\beta\right) \right)^{\bullet_1,\bullet_2} \;=\; \left(\ker\delta_1 \cap \imm\delta_2 \cap A^{\hat\delta_1\, \bullet_1 + \hat\delta_2\, \bullet_2}\right) \otimes_\K \K\, \beta^{\bullet_2} $$
and
$$ \left(\imm\left(\delta_1\otimes_\K\id\right)\left(\delta_2\otimes_\K\beta\right)\right)^{\bullet_1,\bullet_2} \;=\;  \left(\imm\delta_1\delta_2 \cap A^{\hat\delta_1\, \bullet_1 + \hat\delta_2\, \bullet_2}\right) \otimes_\K \K\, \beta^{\bullet_2} \;,$$
completing the proof.
\end{proof}

\section{An inequality {\itshape à la} Fr\"olicher}

Let $A^{\bullet,\bullet}$ be a bounded $\Z^2$-graded $\K$-vector space endowed with two endomorphisms $\delta_1 \in \End^{1,0}\left(A^{\bullet,\bullet}\right)$ and $\delta_2 \in \End^{0,1}\left(A^{\bullet,\bullet}\right)$ such that $\delta_1^2 = \delta_2^2 = \delta_1\delta_2+\delta_2\delta_1 = 0$.
The bi-grading induces two natural bounded filtrations of the $\Z$-graded $\K$-vector space $\Tot^\bullet \left( A^{\bullet,\bullet} \right)$ endowed with the endomorphism $\delta_1+\delta_2\in \End^{1}\left(\Tot^\bullet \left( A^{\bullet,\bullet} \right)\right)$, namely,
$$ \left\{ {'F}^{p} \Tot^\bullet \left( A^{\bullet,\bullet} \right) := \bigoplus_{\substack{r+s=\bullet\\r\geq p}}A^{r,s} \hookrightarrow \Tot^\bullet\left(A^{\bullet,\bullet}\right) \right\}_{p\in\Z} $$
and
$$ \left\{ {''F}^{q} \Tot^\bullet \left( A^{\bullet,\bullet} \right) := \bigoplus_{\substack{r+s=\bullet\\s\geq q}}A^{r,s} \hookrightarrow \Tot^\bullet\left(A^{\bullet,\bullet}\right) \right\}_{q\in\Z} \;. $$
Such filtrations induce naturally two spectral sequences, respectively, 
$$ \left\{{'E}_r^{\bullet,\bullet}\left(A^{\bullet,\bullet},\, \delta_1,\, \delta_2\right)\right\}_{r\in\Z} \qquad \text{ and } \qquad \left\{{''E}_r^{\bullet,\bullet}\left(A^{\bullet,\bullet},\, \delta_1,\, \delta_2\right)\right\}_{r\in\Z} \;, $$
such that
$$ {'E}_1^{\bullet_1,\bullet_2}\left(A^{\bullet,\bullet},\, \delta_1,\, \delta_2\right) \;\simeq\; H^{\bullet_1,\bullet_2}_{\left( \delta_2 ; \delta_2 \right)}\left(A^{\bullet,\bullet}\right) \;\Rightarrow\; H^{\bullet_1+\bullet_2}_{\left( \delta_1+\delta_2 ; \delta_1+\delta_2 \right)}\left(\Tot^\bullet \left( A^{\bullet,\bullet} \right)\right) \;, $$
and 
$$ {''E}_1^{\bullet_1,\bullet_2}\left(A^{\bullet,\bullet},\, \delta_1,\, \delta_2\right) \;\simeq\; H^{\bullet_1,\bullet_2}_{\left( \delta_1 ; \delta_1 \right)}\left(A^{\bullet,\bullet}\right) \;\Rightarrow\; H^{\bullet_1+\bullet_2}_{\left( \delta_1+\delta_2 ; \delta_1+\delta_2 \right)}\left(\Tot^\bullet\left(A^{\bullet,\bullet}\right)\right) \;, $$
see, e.g., \cite[\S2.4]{mccleary}, see also \cite[\S3.5]{griffiths-harris}.

\medskip

By using these spectral sequences (and up to consider $-\delta_2$ instead of $\delta_2$), one gets the classical \emph{A. Fr\"olicher inequality}.

\begin{notation}
 Given two $\Z$-graded $\K$-vector spaces $A^\bullet$ and $B^\bullet$, writing, for example, $\dim_\K A^\bullet \geq \dim_\K B^\bullet$, we mean that, for any $k\in\Z$, the inequality $\dim_\K A^k \geq \dim_\K B^k$ holds.
\end{notation}

\begin{prop}[{\cite[Theorem 2]{frolicher}}]\label{prop:frolicher-classico}
 Let $A^{\bullet,\bullet}$ be a bounded $\Z^2$-graded $\K$-vector space endowed with two endomorphisms $\delta_1 \in \End^{1,0}\left(A^{\bullet,\bullet}\right)$ and $\delta_2 \in \End^{0,1}\left(A^{\bullet,\bullet}\right)$ such that $\delta_1^2 = \delta_2^2 = \delta_1\delta_2+\delta_2\delta_1 = 0$. Then, for $\pm\in\{+,-\}$,
 $$ \min\left\{ \dim_\K \Tot^\bullet H^{\bullet,\bullet}_{\left( \delta_1 ; \delta_1 \right)}\left(A^{\bullet,\bullet}\right) ,\; \dim_\K \Tot^\bullet H^{\bullet,\bullet}_{\left( \delta_2 ; \delta_2 \right)}\left(A^{\bullet,\bullet}\right) \right\} \;\geq\; \dim_\K H^\bullet_{\left( \delta_1 \pm \delta_2 ; \delta_1 \pm \delta_2 \right)}\left(\Tot^\bullet A^{\bullet,\bullet}\right) \;. $$
\end{prop}

As a straightforward consequence, the following result holds in the $\Z$-graded case.
\begin{cor}
 Let $A^{\bullet}$ be a bounded $\Z$-graded $\K$-vector space endowed with two endomorphisms $\delta_1 \in \End^{\hat\delta_1}\left(A^{\bullet}\right)$ and $\delta_2 \in \End^{\hat\delta_2}\left(A^{\bullet}\right)$ such that $\delta_1^2 = \delta_2^2 = \delta_1\delta_2+\delta_2\delta_1 = 0$. Then, for $\pm\in\{+,-\}$,
 \begin{eqnarray*}
  \lefteqn{\min\left\{ \sum_{p+q=\bullet}\dim_\K H^{\hat\delta_1 \, p + \hat\delta_2 \, q}_{\left( \delta_1 ; \delta_1 \right)}\left(A^{\bullet}\right) ,\; \sum_{p+q=\bullet} \dim_\K H^{\hat\delta_1 \, p + \hat\delta_2 \, q}_{\left( \delta_2 ; \delta_2 \right)}\left(A^{\bullet}\right) \right\}} \\[5pt]
  &\geq& \dim_\K H^\bullet_{\left( \left(\delta_1\otimes_\K\id\right) \pm \left(\delta_2\otimes_\K\beta\right) ; \left(\delta_1\otimes_\K\id\right) \pm \left(\delta_2\otimes_\K\beta\right) \right)}\left(\Tot^\bullet \Doub^{\bullet,\bullet}A^\bullet\right) \;.
 \end{eqnarray*}
\end{cor}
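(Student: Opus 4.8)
The plan is to obtain this $\Z$-graded inequality as a direct translation of the bigraded Fr\"olicher inequality in Proposition \ref{prop:frolicher-classico}, applied to the canonical double complex $\left(\Doub^{\bullet,\bullet}\left(A^\bullet\right),\, \delta_1\otimes_\K\id,\, \delta_2\otimes_\K\beta\right)$ associated to $A^\bullet$. First I would recall that $\delta_1\otimes_\K\id\in\End^{1,0}\left(\Doub^{\bullet,\bullet}\left(A^\bullet\right)\right)$ and $\delta_2\otimes_\K\beta\in\End^{0,1}\left(\Doub^{\bullet,\bullet}\left(A^\bullet\right)\right)$ square to zero and anti-commute, and that $\Doub^{\bullet_1,\bullet_2}\left(V^\bullet\right)=V^{\hat\delta_1\,\bullet_1+\hat\delta_2\,\bullet_2}\otimes_\K\K\,\beta^{\bullet_2}$ for every $\Z$-graded $\K$-vector space $V^\bullet$. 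Since $A^\bullet$ is bounded, $\Doub^{\bullet,\bullet}\left(A^\bullet\right)$ has only finitely many non-zero components on each total-degree diagonal, so Proposition \ref{prop:frolicher-classico} applies to it and produces, for $\pm\in\{+,-\}$, the inequality relating the minimum of $\dim_\K\Tot^\bullet H^{\bullet,\bullet}_{\left(\delta_1\otimes_\K\id;\delta_1\otimes_\K\id\right)}\left(\Doub^{\bullet,\bullet}A^\bullet\right)$ and $\dim_\K\Tot^\bullet H^{\bullet,\bullet}_{\left(\delta_2\otimes_\K\beta;\delta_2\otimes_\K\beta\right)}\left(\Doub^{\bullet,\bullet}A^\bullet\right)$ to $\dim_\K H^\bullet_{\left(\left(\delta_1\otimes_\K\id\right)\pm\left(\delta_2\otimes_\K\beta\right);\left(\delta_1\otimes_\K\id\right)\pm\left(\delta_2\otimes_\K\beta\right)\right)}\left(\Tot^\bullet\Doub^{\bullet,\bullet}A^\bullet\right)$.

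Next I would rewrite the two quantities under the minimum in terms of the cohomology of $A^\bullet$. By Lemma \ref{lemma:cohom-a-doub}, for $i\in\{1,2\}$ there are natural isomorphisms $H^{\bullet_1,\bullet_2}_{\sharp_i}\left(\Doub^{\bullet,\bullet}A^\bullet\right)\simeq\Doub^{\bullet_1,\bullet_2} H^\bullet_{\left(\delta_i;\delta_i\right)}\left(A^\bullet\right)=H^{\hat\delta_1\,\bullet_1+\hat\delta_2\,\bullet_2}_{\left(\delta_i;\delta_i\right)}\left(A^\bullet\right)\otimes_\K\K\,\beta^{\bullet_2}$, where $\sharp_1=\left(\delta_1\otimes_\K\id;\delta_1\otimes_\K\id\right)$ and $\sharp_2=\left(\delta_2\otimes_\K\beta;\delta_2\otimes_\K\beta\right)$. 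Since $\dim_\K\K\,\beta^{\bullet_2}=1$, summing along the diagonal $\bullet_1+\bullet_2=\bullet$ and renaming $(\bullet_1,\bullet_2)=(p,q)$ yields
$$\dim_\K\Tot^\bullet H^{\bullet,\bullet}_{\sharp_i}\left(\Doub^{\bullet,\bullet}A^\bullet\right)=\sum_{p+q=\bullet}\dim_\K H^{\hat\delta_1\,p+\hat\delta_2\,q}_{\left(\delta_i;\delta_i\right)}\left(A^\bullet\right),$$
which is exactly the $i$-th term appearing under the minimum in the statement.

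Finally, the right-hand side requires no translation: $\Tot^\bullet\Doub^{\bullet,\bullet}A^\bullet$ is by construction the $\Z$-graded space carrying $\left(\delta_1\otimes_\K\id\right)\pm\left(\delta_2\otimes_\K\beta\right)$, so its cohomology with respect to this endomorphism is precisely the term on the right of the claimed inequality. Substituting the two identities of the previous step into the inequality furnished by Proposition \ref{prop:frolicher-classico} then gives the assertion. The argument is essentially pure degree bookkeeping, and I do not expect a genuine obstacle; the only point deserving attention is the boundedness hypothesis needed to invoke Proposition \ref{prop:frolicher-classico}, namely finiteness of $\Tot^\bullet\Doub^{\bullet,\bullet}A^\bullet$ in each degree, which follows from the boundedness of $A^\bullet$ whenever $\hat\delta_1\neq\hat\delta_2$ (as is the case in all the geometric applications, where $\left\{\hat\delta_1,\hat\delta_2\right\}=\{+1,-1\}$).
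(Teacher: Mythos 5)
Your proposal is correct and follows essentially the same route as the paper: the paper likewise applies the classical Fr\"olicher inequality (Proposition \ref{prop:frolicher-classico}) to the canonical double complex $\left(\Doub^{\bullet,\bullet}A^{\bullet},\, \delta_1\otimes_\K\id,\, \delta_2\otimes_\K\beta\right)$ and rewrites the Dolbeault-type dimensions via Lemma \ref{lemma:cohom-a-doub}, using that $\dim_\K H^{\bullet_1,\bullet_2}_{\sharp_{\delta_1\otimes_\K\id,\delta_2\otimes_\K\beta}}\left(\Doub^{\bullet,\bullet}A^{\bullet}\right)=\dim_\K H^{\hat\delta_1\,\bullet_1+\hat\delta_2\,\bullet_2}_{\sharp_{\delta_1,\delta_2}}\left(A^{\bullet}\right)$. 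Your closing remark on finiteness along each total-degree anti-diagonal (for $\hat\delta_1\neq\hat\delta_2$) is a welcome extra precision that the paper's one-line invocation of Proposition \ref{prop:frolicher-classico} leaves implicit, since $\Doub^{\bullet,\bullet}A^{\bullet}$ itself is not a bounded $\Z^2$-graded space.
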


\begin{proof}
 By Lemma \ref{lemma:cohom-a-doub}, one has that, for $\sharp_{\delta_1,\delta_2} \in \left\{ \left( \delta_1 , \delta_2 ; \delta_1\delta_2 \right) ,\, \left( \delta_1 ; \delta_1 \right) ,\, \left( \delta_2 ; \delta_2 \right) ,\, \left( \delta_1\delta_2 ; \delta_1 , \delta_2 \right) \right\}$,
 $$ \dim_\K H^{\bullet_1,\bullet_2}_{\sharp_{\delta_1\otimes_\K\id, \delta_2\otimes_\K\beta}} \left(\Doub^{\bullet,\bullet}A^{\bullet}\right) \;=\; \dim_\K H_{\sharp_{\delta_1,\delta_2}}^{\hat\delta_1 \, \bullet_1 + \hat\delta_2 \, \bullet_2}\left(A^\bullet\right) \;. $$
 Hence, by applying the classical Fr\"olicher inequality, Proposition \ref{prop:frolicher-classico}, to $\Doub^{\bullet,\bullet}$ endowed with $\delta_1\otimes_\K\id$ and $\delta_2\otimes_\K\beta$, one gets, for $\pm\in\{+,-\}$,
 \begin{eqnarray*}
  \lefteqn{\min\left\{ \sum_{p+q=\bullet}\dim_\K H^{\hat\delta_1 \, p + \hat\delta_2 \, q}_{\left( \delta_1 ; \delta_1 \right)}\left(A^{\bullet}\right) ,\; \sum_{p+q=\bullet} \dim_\K H^{\hat\delta_1 \, p + \hat\delta_2 \, q}_{\left( \delta_2 ; \delta_2 \right)}\left(A^{\bullet}\right) \right\}} \\[5pt]
  &=& \min\left\{ \sum_{p+q=\bullet} \dim_\K H^{p,q}_{\left( \delta_1\otimes_\K\id ; \delta_1\otimes_\K\id \right)}\left(\Doub^{\bullet,\bullet} A^{\bullet}\right) ,\; \sum_{p+q=\bullet} \dim_\K H^{p,q}_{\left( \delta_2\otimes_\K\beta ; \delta_2\otimes_\K\beta \right)}\left(\Doub^{\bullet,\bullet} A^{\bullet}\right) \right\} \\[5pt]
  &=& \min\left\{ \dim_\K \Tot^\bullet H^{\bullet,\bullet}_{\left( \delta_1\otimes_\K\id ; \delta_1\otimes_\K\id \right)}\left(\Doub^{\bullet,\bullet} A^{\bullet}\right) ,\; \dim_\K \Tot^\bullet H^{\bullet,\bullet}_{\left( \delta_2\otimes_\K\beta ; \delta_2\otimes_\K\beta \right)}\left(\Doub^{\bullet,\bullet} A^{\bullet}\right) \right\} \\[5pt]
  &\geq& \dim_\K H^\bullet_{\left( \left(\delta_1\otimes_\K\id\right) \pm \left(\delta_2\otimes_\K\beta\right) ; \left(\delta_1\otimes_\K\id\right) \pm \left(\delta_2\otimes_\K\beta\right) \right)}\left(\Tot^\bullet \Doub^{\bullet,\bullet} A^\bullet\right) \;,
\end{eqnarray*}
 completing the proof.
\end{proof}

We prove the following inequality {\itshape à la} Fr\"olicher involving the cohomologies $H^\bullet_{\left( \delta_1 , \delta_2 ; \delta_1\delta_2 \right)}\left(A^\bullet\right)$ and $H^\bullet_{\left( \delta_1\delta_2 ; \delta_1 , \delta_2 \right)}\left(A^\bullet\right)$, other than $H^\bullet_{\left( \delta_1 ; \delta_1 \right)}\left(A^\bullet\right)$ and $H^\bullet_{\left( \delta_2 ; \delta_2 \right)}\left(A^\bullet\right)$.

\begin{thm}\label{thm:disug-frol}
Let $A^\bullet$ be a $\Z$-graded $\mathbb{K}$-vector space endowed with two endomorphisms $\delta_1 \in \End^{\hat\delta_1}\left(A^\bullet\right)$ and $\delta_2 \in \End^{\hat\delta_2}\left(A^\bullet\right)$ such that $\delta_1^2 = \delta_2^2 = \delta_1\delta_2+\delta_2\delta_1 = 0$.
Suppose that
$$ \dim_\K H^\bullet_{\left( \delta_1 ; \delta_1 \right)}\left(A^\bullet\right) < +\infty  \qquad \text{ and } \qquad  \dim_\K H^\bullet_{\left( \delta_2 ; \delta_2 \right)}\left(A^\bullet\right) \;<\; +\infty \;.$$
Then
\begin{equation}\label{eq:disug-frol}
\dim_\K H^\bullet_{\left( \delta_1 , \delta_2 ; \delta_1\delta_2 \right)}\left(A^\bullet\right) + \dim_\K H^\bullet_{\left( \delta_1\delta_2 ; \delta_1 , \delta_2 \right)}\left(A^\bullet\right) \;\geq\; \dim_\K H^\bullet_{\left( \delta_1 ; \delta_1 \right)}\left(A^\bullet\right) + \dim_\K H^\bullet_{\left( \delta_2 ; \delta_2 \right)}\left(A^\bullet\right) \;. 
\end{equation}
\end{thm}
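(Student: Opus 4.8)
The plan is to realize the four cohomologies as the three terms of a single complex and then read off the inequality from rank-nullity. Concretely, I would consider, in each degree, the sequence of $\Z$-graded $\K$-vector spaces
$$ H^\bullet_{\left( \delta_1 , \delta_2 ; \delta_1\delta_2 \right)}\left(A^\bullet\right) \xrightarrow{\;\phi\;} H^\bullet_{\left( \delta_1 ; \delta_1 \right)}\left(A^\bullet\right) \oplus H^\bullet_{\left( \delta_2 ; \delta_2 \right)}\left(A^\bullet\right) \xrightarrow{\;\psi\;} H^\bullet_{\left( \delta_1\delta_2 ; \delta_1 , \delta_2 \right)}\left(A^\bullet\right) \;, $$
where $\phi\left[x\right] := \left(\left[x\right]_{\delta_1},\, \left[x\right]_{\delta_2}\right)$ for a representative $x\in\ker\delta_1\cap\ker\delta_2$, and $\psi\left(\left[y\right]_{\delta_1},\, \left[z\right]_{\delta_2}\right) := \left[y-z\right]$ in Aeppli cohomology. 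Both maps preserve degree, so the argument runs degree by degree, matching the convention for the graded inequality \eqref{eq:disug-frol}.

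First I would verify that $\phi$ and $\psi$ are well defined, which is where the structural hypotheses enter: for $\phi$, replacing $x$ by $x+\delta_1\delta_2 w$ changes the $\delta_1$-class by $\delta_1(\delta_2 w)\in\imm\delta_1$ and, using $\delta_1\delta_2 = -\delta_2\delta_1$, the $\delta_2$-class by $\delta_1\delta_2 w = -\delta_2\delta_1 w\in\imm\delta_2$; for $\psi$, one checks $y-z\in\ker\delta_1\delta_2$ (since $\delta_1\delta_2 y = -\delta_2\delta_1 y = 0$ and $\delta_1\delta_2 z = \delta_1\cdot 0 = 0$) and that altering the representatives of $\left[y\right]_{\delta_1}$ and $\left[z\right]_{\delta_2}$ moves $y-z$ only within $\imm\delta_1+\imm\delta_2$. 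A one-line computation then gives $\psi\circ\phi = 0$, since $\phi\left[x\right]$ maps to $\left[x-x\right]=0$.

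The heart of the proof, and the only step requiring real work, is exactness in the middle, i.e. $\ker\psi\subseteq\imm\phi$. Given $\left(\left[y\right]_{\delta_1},\, \left[z\right]_{\delta_2}\right)\in\ker\psi$, one has $y-z = \delta_1 u + \delta_2 v$ for some $u,v$; the element $w := y - \delta_1 u = z + \delta_2 v$ then satisfies $\delta_1 w = 0$ (reading it as $y-\delta_1 u$ and using $\delta_1^2=0$) and $\delta_2 w = 0$ (reading it as $z+\delta_2 v$ and using $\delta_2^2=0$), so $w$ represents a Bott-Chern class with $\left[w\right]_{\delta_1}=\left[y\right]_{\delta_1}$ and $\left[w\right]_{\delta_2}=\left[z\right]_{\delta_2}$, whence $\phi\left[w\right] = \left(\left[y\right]_{\delta_1},\, \left[z\right]_{\delta_2}\right)$. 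Producing this single representative $w$ out of the relation $y-z=\delta_1 u+\delta_2 v$ is the main obstacle; everything else is formal.

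Finally I would conclude by rank-nullity applied, in each degree, to the space $V^\bullet := H^\bullet_{\left( \delta_1 ; \delta_1 \right)}\left(A^\bullet\right) \oplus H^\bullet_{\left( \delta_2 ; \delta_2 \right)}\left(A^\bullet\right)$, which is finite-dimensional precisely by the standing hypotheses on the $\delta_i$-cohomologies. Exactness at the middle yields $\dim_\K V^\bullet = \dim_\K\imm\phi + \dim_\K\imm\psi$, and since $\dim_\K\imm\phi \leq \dim_\K H^\bullet_{\left( \delta_1 , \delta_2 ; \delta_1\delta_2 \right)}\left(A^\bullet\right)$ and $\dim_\K\imm\psi \leq \dim_\K H^\bullet_{\left( \delta_1\delta_2 ; \delta_1 , \delta_2 \right)}\left(A^\bullet\right)$, the inequality \eqref{eq:disug-frol} follows at once. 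Note that only exactness at the middle term is needed, not exactness at the outer terms, so no control of $\ker\phi$ or $\operatorname{coker}\psi$ is required.
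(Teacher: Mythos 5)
Your proposal is correct, but it takes a genuinely different route from the paper's. You realize the four cohomologies as a single three-term sequence
$$ H^\bullet_{\left( \delta_1 , \delta_2 ; \delta_1\delta_2 \right)}\left(A^\bullet\right) \xrightarrow{\;\phi\;} H^\bullet_{\left( \delta_1 ; \delta_1 \right)}\left(A^\bullet\right) \oplus H^\bullet_{\left( \delta_2 ; \delta_2 \right)}\left(A^\bullet\right) \xrightarrow{\;\psi\;} H^\bullet_{\left( \delta_1\delta_2 ; \delta_1 , \delta_2 \right)}\left(A^\bullet\right) $$
which is exact at the middle term, and your key construction is sound: from $y-z=\delta_1 u+\delta_2 v$ the element $w:=y-\delta_1 u=z+\delta_2 v$ indeed lies in $\ker\delta_1\cap\ker\delta_2$ and hits $\left(\left[y\right]_{\delta_1},\left[z\right]_{\delta_2}\right)$; the well-definedness checks using $\delta_1\delta_2=-\delta_2\delta_1$ are also right, and everything is degree-preserving, so the degreewise reading of \eqref{eq:disug-frol} is respected. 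Rank-nullity on the middle space (finite-dimensional by hypothesis) then gives the inequality, and since $\dim_\K\imm\phi\leq\dim_\K H^\bullet_{\left( \delta_1 , \delta_2 ; \delta_1\delta_2 \right)}\left(A^\bullet\right)$ and $\dim_\K\imm\psi\leq\dim_\K H^\bullet_{\left( \delta_1\delta_2 ; \delta_1 , \delta_2 \right)}\left(A^\bullet\right)$ hold without any finiteness of the outer terms, you even dispense with the paper's preliminary reduction to the case where Bott-Chern and Aeppli cohomologies are finite-dimensional. The paper instead follows J. Varouchas: it writes down four five-term exact sequences, extracts four dimension identities, and sums them to get the exact defect formula \eqref{eq:BC-A-delta1-delta2}, which says that the left-hand side of \eqref{eq:disug-frol} exceeds the right-hand side by precisely $\dim_\K \frac{\imm\delta_1 \cap \imm\delta_2}{\imm\delta_1\delta_2} + \dim_\K \frac{\ker\delta_1\delta_2}{\ker\delta_1 + \ker\delta_2}$. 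That \emph{equality}, not just the inequality, is what gets used later: in the proof of Theorem \ref{thm:caratt-deldelbar-lemma-double}, equality in the Fr\"olicher-type bound forces both correction terms to vanish, which feeds Lemma \ref{lemma:BC-dr-surj} and Lemma \ref{lemma:dR-A-inj}. Worth noting, your sequence secretly carries the same information: $\ker\phi = \frac{\imm\delta_1\cap\imm\delta_2}{\imm\delta_1\delta_2}$ (any element of $\imm\delta_1\cap\imm\delta_2$ is automatically $\delta_1$- and $\delta_2$-closed by anti-commutativity) and $\operatorname{coker}\psi \simeq \frac{\ker\delta_1\delta_2}{\ker\delta_1+\ker\delta_2}$, so computing kernel and cokernel in addition to middle exactness would recover \eqref{eq:BC-A-delta1-delta2} in full, at the cost of reinstating the finiteness reduction you avoided. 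For the statement as posed, your shorter argument suffices.
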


\begin{proof}
If either $H^\bullet_{\left( \delta_1 , \delta_2 ; \delta_1\delta_2 	\right)}\left(A^\bullet\right)$ or $H^\bullet_{\left( \delta_1\delta_2 ; \delta_1 , \delta_2 \right)}\left(A^\bullet\right)$ is not finite-dimensional, then the inequality holds trivially; hence, we are reduced to suppose that also
$$ \dim_\K H^\bullet_{\left( \delta_1 , \delta_2 ; \delta_1\delta_2 	\right)}\left(A^\bullet\right) < +\infty \qquad \text{ and } \qquad \dim_\K H^\bullet_{\left( \delta_1\delta_2 ; \delta_1 , \delta_2 \right)}\left(A^\bullet\right) < +\infty \;.$$

Following J. Varouchas, \cite[\S3.1]{varouchas}, consider the exact sequences
 \begin{eqnarray*}
  0 \to \frac{\imm\delta_1 \cap \imm\delta_2}{\imm\delta_1\delta_2} \to \frac{\ker\delta_1 \cap \imm\delta_2}{\imm\delta_1\delta_2} \to \frac{\ker\delta_1}{\imm\delta_1} \to \frac{\ker\delta_1\delta_2}{\imm\delta_1 + \imm\delta_2} \to \frac{\ker\delta_1\delta_2}{\ker\delta_1 + \imm\delta_2} \to 0 \;, \\[5pt]
  0 \to \frac{\imm\delta_1 \cap \imm\delta_2}{\imm\delta_1\delta_2} \to \frac{\ker\delta_2 \cap \imm\delta_1}{\imm\delta_1\delta_2} \to \frac{\ker\delta_2}{\imm\delta_2} \to \frac{\ker\delta_1\delta_2}{\imm\delta_1 + \imm\delta_2} \to \frac{\ker\delta_1\delta_2}{\ker\delta_2 + \imm\delta_1} \to 0 \;, \\[5pt]
  0 \to \frac{\imm\delta_1 \cap \ker\delta_2}{\imm\delta_1\delta_2} \to \frac{\ker\delta_1 \cap \ker\delta_2}{\imm\delta_1\delta_2} \to \frac{\ker\delta_1}{\imm\delta_1} \to \frac{\ker\delta_1\delta_2}{\ker\delta_2 + \imm\delta_1} \to \frac{\ker\delta_1\delta_2}{\ker\delta_1 + \ker\delta_2} \to 0 \;, \\[5pt]
  0 \to \frac{\imm\delta_2 \cap \ker\delta_1}{\imm\delta_1\delta_2} \to \frac{\ker\delta_1 \cap \ker\delta_2}{\imm\delta_1\delta_2} \to \frac{\ker\delta_2}{\imm\delta_2} \to \frac{\ker\delta_1\delta_2}{\ker\delta_1 + \imm\delta_2} \to \frac{\ker\delta_1\delta_2}{\ker\delta_1 + \ker\delta_2} \to 0
 \end{eqnarray*}
of $\Z$-graded $\K$-vector spaces.

Note that all the $\K$-vector spaces appearing in the exact sequences have finite dimension. Indeed, since $H^\bullet_{\left( \delta_1\delta_2 ; \delta_1 , \delta_2 \right)}\left(A^\bullet\right)$ has finite dimension, then
$$ \dim_\K \frac{\ker\delta_1\delta_2}{\ker\delta_1 + \imm\delta_2} \;<\; +\infty \qquad \text{ and } \qquad \dim_\K \frac{\ker\delta_1\delta_2}{\ker\delta_2 + \imm\delta_1} \;<\; +\infty \;;$$
since $H^\bullet_{\left( \delta_1 , \delta_2 ; \delta_1 \delta_2 \right)}\left(A^\bullet\right)$ has finite dimension, then
$$ \dim_\K \frac{\imm\delta_1 \cap \ker\delta_2}{\imm\delta_1\delta_2} \;<\; +\infty \qquad \text{ and } \qquad \dim_\K \frac{\imm\delta_2 \cap \ker\delta_1}{\imm\delta_1\delta_2} \;<\; +\infty \;.$$
Furthermore, note that the natural maps $\frac{\ker\delta_1 \cap \imm\delta_2}{\imm\delta_1\delta_2} \to H^{\bullet}_{\left( \delta_1 , \delta_2 ; \delta_1 \delta_2 \right)}\left(A^\bullet\right)$ and $\frac{\ker\delta_2 \cap \imm\delta_1}{\imm\delta_1\delta_2} \to H^{\bullet}_{\left( \delta_1 , \delta_2 ; \delta_1 \delta_2 \right)}\left(A^\bullet\right)$ induced by the identity are injective, and hence
$$ \dim_\K \frac{\ker\delta_1 \cap \imm\delta_2}{\imm\delta_1\delta_2} \;<\; +\infty \qquad \text{ and } \qquad \dim_\K \frac{\ker\delta_2 \cap \imm\delta_1}{\imm\delta_1\delta_2} \;<\; +\infty \;;$$
it follows also that
$$ \dim_\K \frac{\imm\delta_1 \cap \imm\delta_2}{\imm\delta_1\delta_2} \;<\; +\infty \;. $$
Analogously, since the natural maps $H^\bullet_{\left( \delta_1\delta_2 ; \delta_1 , \delta_2 \right)}\left(A^\bullet\right) \to \frac{\ker\delta_1\delta_2}{\ker\delta_2 + \imm\delta_1}$ and $H^\bullet_{\left( \delta_1\delta_2 ; \delta_1 , \delta_2 \right)}\left(A^\bullet\right) \to \frac{\ker\delta_1\delta_2}{\ker\delta_1 + \imm\delta_2}$ induced by the identity are surjective, then
$$ \dim_\K \frac{\ker\delta_1\delta_2}{\ker\delta_2 + \imm\delta_1} \;<\; +\infty \qquad \text{ and } \qquad \dim_\K \frac{\ker\delta_1\delta_2}{\ker\delta_1 + \imm\delta_2} \;<\; +\infty \;,$$
and hence also
$$ \dim_\K \frac{\ker\delta_1\delta_2}{\ker\delta_1 + \ker\delta_2} \;<\; +\infty \;. $$

By using the above exact sequences, it follows that
 \begin{eqnarray*}
  \dim_\K \frac{\ker\delta_1\delta_2}{\imm\delta_1 + \imm\delta_2} &=& \dim_\K \frac{\imm\delta_1 \cap \imm\delta_2}{\imm\delta_1\delta_2} - \dim_\K \frac{\ker\delta_1 \cap \imm\delta_2}{\imm\delta_1\delta_2} + \dim_\K \frac{\ker\delta_1}{\imm\delta_1} + \dim_\K \frac{\ker\delta_1\delta_2}{\ker\delta_1 + \imm\delta_2} \;, \\[5pt]
  \dim_\K \frac{\ker\delta_1\delta_2}{\imm\delta_1 + \imm\delta_2}  &=& \dim_\K \frac{\imm\delta_1 \cap \imm\delta_2}{\imm\delta_1\delta_2} - \dim_\K \frac{\ker\delta_2 \cap \imm\delta_1}{\imm\delta_1\delta_2} + \dim_\K \frac{\ker\delta_2}{\imm\delta_2} + \dim_\K \frac{\ker\delta_1\delta_2}{\ker\delta_2 + \imm\delta_1} \;, \\[5pt]
  \dim_\K \frac{\ker\delta_1 \cap \ker\delta_2}{\imm\delta_1\delta_2} &=& \dim_\K \frac{\imm\delta_1 \cap \ker\delta_2}{\imm\delta_1\delta_2} + \dim_\K \frac{\ker\delta_1}{\imm\delta_1} - \dim_\K \frac{\ker\delta_1\delta_2}{\ker\delta_2 + \imm\delta_1} + \dim_\K \frac{\ker\delta_1\delta_2}{\ker\delta_1 + \ker\delta_2} \;, \\[5pt]
  \dim_\K \frac{\ker\delta_1 \cap \ker\delta_2}{\imm\delta_1\delta_2} &=& \dim_\K \frac{\imm\delta_2 \cap \ker\delta_1}{\imm\delta_1\delta_2} + \dim_\K \frac{\ker\delta_2}{\imm\delta_2} - \dim_\K \frac{\ker\delta_1\delta_2}{\ker\delta_1 + \imm\delta_2} + \dim_\K \frac{\ker\delta_1\delta_2}{\ker\delta_1 + \ker\delta_2} \;,
 \end{eqnarray*}
from which, by summing up, one gets
 \begin{eqnarray}
  \lefteqn{ 2\, \dim_\K \frac{\ker\delta_1\delta_2}{\imm\delta_1 + \imm\delta_2} + 2\, \dim_\K \frac{\ker\delta_1 \cap \ker\delta_2}{\imm\delta_1\delta_2} } \nonumber\\[5pt]
  &=& 2\, \dim_\K \frac{\ker\delta_1}{\imm\delta_1} + 2\, \dim_\K \frac{\ker\delta_2}{\imm\delta_2} + 2\, \dim_\K \frac{\imm\delta_1 \cap \imm\delta_2}{\imm\delta_1\delta_2}  + 2\, \dim_\K \frac{\ker\delta_1\delta_2}{\ker\delta_1 + \ker\delta_2} \label{eq:BC-A-delta1-delta2}\\[5pt]
  &\geq& 2\, \dim_\K \frac{\ker\delta_1}{\imm\delta_1} + 2\, \dim_\K \frac{\ker\delta_2}{\imm\delta_2} \;, \nonumber
 \end{eqnarray}
 yielding
 $$
  \dim_\K \frac{\ker\delta_1\delta_2}{\imm\delta_1 + \imm\delta_2} + \dim_\K \frac{\ker\delta_1 \cap \ker\delta_2}{\imm\delta_1\delta_2} \; \geq \; \dim_\K \frac{\ker\delta_1}{\imm\delta_1} + \dim_\K \frac{\ker\delta_2}{\imm\delta_2} \;,
 $$
 and hence the theorem.
\end{proof}

\begin{rem}
 Note that the proof of Theorem \ref{thm:disug-frol} works also for $\Z^2$-graded $\K$-vector spaces, since in this case J. Varouchas' exact sequences are in fact exact sequences of $\Z^2$-graded $\K$-vector spaces. More precisely, one gets that, given a $\Z^2$-graded $\mathbb{K}$-vector space $A^{\bullet,\bullet}$ endowed with two endomorphisms $\delta_1 \in \End^{\hat\delta_{1,1}, \hat\delta_{1,2}}\left(A^{\bullet,\bullet}\right)$ and $\delta_2 \in \End^{\hat\delta_{2,1}, \hat\delta_{2,2}}\left(A^{\bullet,\bullet}\right)$ such that $\delta_1^2 = \delta_2^2 = \delta_1\delta_2+\delta_2\delta_1 = 0$, and supposed that
 $$ \dim_\K H^{\bullet,\bullet}_{\left( \delta_1 ; \delta_1 \right)}\left(A^{\bullet,\bullet}\right) < +\infty  \qquad \text{ and } \qquad  \dim_\K H^{\bullet,\bullet}_{\left( \delta_2 ; \delta_2 \right)}\left(A^{\bullet,\bullet}\right) \;<\; +\infty \;,$$
 then
 \begin{equation*}
 \dim_\K H^{\bullet,\bullet}_{\left( \delta_1 , \delta_2 ; \delta_1\delta_2 \right)}\left(A^{\bullet,\bullet}\right) + \dim_\K H^{\bullet,\bullet}_{\left( \delta_1\delta_2 ; \delta_1 , \delta_2 \right)}\left(A^{\bullet,\bullet}\right) \;\geq\; \dim_\K H^{\bullet,\bullet}_{\left( \delta_1 ; \delta_1 \right)}\left(A^{\bullet,\bullet}\right) + \dim_\K H^{\bullet,\bullet}_{\left( \delta_2 ; \delta_2 \right)}\left(A^{\bullet,\bullet}\right) \;. 
 \end{equation*}
\end{rem}

As a consequence of Theorem \ref{thm:disug-frol} and Proposition \ref{prop:frolicher-classico}, one gets the following inequality {\itshape à la} Fr\"olicher for double complexes, namely, $\Z^2$-graded $\K$-vector spaces $B^{\bullet,\bullet}$ endowed with two endomorphisms $\delta_1 \in \End^{1,0}\left(B^{\bullet,\bullet}\right)$ and $\delta_2 \in \End^{0,1}\left(B^{\bullet,\bullet}\right)$ such that $\delta_1^2 = \delta_2^2 = \delta_1\delta_2+\delta_2\delta_1 = 0$.

\begin{cor}\label{cor:frolicher-like-double-complexes}
 Let $B^{\bullet,\bullet}$ be a bounded $\Z^2$-graded $\mathbb{K}$-vector space endowed with two endomorphisms $\delta_1 \in \End^{1,0}\left(B^{\bullet,\bullet}\right)$ and $\delta_2 \in \End^{0,1}\left(B^{\bullet,\bullet}\right)$ such that $\delta_1^2 = \delta_2^2 = \delta_1\delta_2+\delta_2\delta_1 = 0$.
 Suppose that
 $$ \dim_\K \Tot^\bullet H^{\bullet,\bullet}_{\left( \delta_1 ; \delta_1 \right)}\left(B^{\bullet,\bullet}\right) < +\infty  \qquad \text{ and } \qquad  \dim_\K \Tot^\bullet H^{\bullet,\bullet}_{\left( \delta_2 ; \delta_2 \right)}\left(B^{\bullet,\bullet}\right) \;<\; +\infty \;.$$
 Then, for $\pm\in\{+,-\}$,
 \begin{equation}\label{eq:disug-frol-double-complexes}
 \dim_\K \Tot^\bullet H^{\bullet,\bullet}_{\left( \delta_1 , \delta_2 ; \delta_1\delta_2 \right)}\left(B^{\bullet,\bullet}\right) + \dim_\K \Tot^\bullet H^{\bullet,\bullet}_{\left( \delta_1\delta_2 ; \delta_1 , \delta_2 \right)}\left(B^{\bullet,\bullet}\right) \;\geq\; 2\, \dim_\K H^\bullet_{\left( \delta_1 \pm \delta_2 ; \delta_1 \pm \delta_2 \right)}\left(\Tot^\bullet B^{\bullet,\bullet}\right) \;. 
 \end{equation}
\end{cor}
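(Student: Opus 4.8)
The plan is to obtain the claimed inequality by chaining two results already at our disposal: the inequality \emph{à la} Fr\"olicher of Theorem \ref{thm:disug-frol}, applied to the total complex, and the classical Fr\"olicher inequality of Proposition \ref{prop:frolicher-classico}. First I would regard $\Tot^\bullet B^{\bullet,\bullet}$ as a $\Z$-graded $\K$-vector space carrying the two endomorphisms $\delta_1,\delta_2$, which on the total complex both acquire total degree $1$ (since $\delta_1\in\End^{1,0}$ and $\delta_2\in\End^{0,1}$). By Remark \ref{rem:z2-grad} one has $\Tot^\bullet H^{\bullet,\bullet}_{\sharp}\left(B^{\bullet,\bullet}\right)=H^\bullet_{\sharp}\left(\Tot^\bullet B^{\bullet,\bullet}\right)$ for each of the four cohomologies $\sharp$; in particular, the standing finiteness hypotheses $\dim_\K \Tot^\bullet H^{\bullet,\bullet}_{\left( \delta_1 ; \delta_1 \right)}\left(B^{\bullet,\bullet}\right)<+\infty$ and $\dim_\K \Tot^\bullet H^{\bullet,\bullet}_{\left( \delta_2 ; \delta_2 \right)}\left(B^{\bullet,\bullet}\right)<+\infty$ are precisely the finiteness hypotheses of Theorem \ref{thm:disug-frol} for $\Tot^\bullet B^{\bullet,\bullet}$.

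Applying Theorem \ref{thm:disug-frol} to $\Tot^\bullet B^{\bullet,\bullet}$ and translating back through Remark \ref{rem:z2-grad}, I obtain the bidegree-summed inequality
$$ \dim_\K \Tot^\bullet H^{\bullet,\bullet}_{\left( \delta_1 , \delta_2 ; \delta_1\delta_2 \right)}\left(B^{\bullet,\bullet}\right) + \dim_\K \Tot^\bullet H^{\bullet,\bullet}_{\left( \delta_1\delta_2 ; \delta_1 , \delta_2 \right)}\left(B^{\bullet,\bullet}\right) \;\geq\; \dim_\K \Tot^\bullet H^{\bullet,\bullet}_{\left( \delta_1 ; \delta_1 \right)}\left(B^{\bullet,\bullet}\right) + \dim_\K \Tot^\bullet H^{\bullet,\bullet}_{\left( \delta_2 ; \delta_2 \right)}\left(B^{\bullet,\bullet}\right) \;. $$
This reduces the problem to bounding the right-hand side from below by $2\, \dim_\K H^\bullet_{\left( \delta_1 \pm \delta_2 ; \delta_1 \pm \delta_2 \right)}\left(\Tot^\bullet B^{\bullet,\bullet}\right)$.

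For that final bound I invoke Proposition \ref{prop:frolicher-classico}, which for each choice of sign gives
$$ \min\left\{ \dim_\K \Tot^\bullet H^{\bullet,\bullet}_{\left( \delta_1 ; \delta_1 \right)}\left(B^{\bullet,\bullet}\right) ,\; \dim_\K \Tot^\bullet H^{\bullet,\bullet}_{\left( \delta_2 ; \delta_2 \right)}\left(B^{\bullet,\bullet}\right) \right\} \;\geq\; \dim_\K H^\bullet_{\left( \delta_1 \pm \delta_2 ; \delta_1 \pm \delta_2 \right)}\left(\Tot^\bullet B^{\bullet,\bullet}\right) \;. $$
Since each of the two terms $\dim_\K \Tot^\bullet H^{\bullet,\bullet}_{\left( \delta_1 ; \delta_1 \right)}\left(B^{\bullet,\bullet}\right)$ and $\dim_\K \Tot^\bullet H^{\bullet,\bullet}_{\left( \delta_2 ; \delta_2 \right)}\left(B^{\bullet,\bullet}\right)$ individually dominates the minimum, their sum dominates twice the right-hand side; substituting this into the inequality from Theorem \ref{thm:disug-frol} yields \eqref{eq:disug-frol-double-complexes} for both $\pm\in\{+,-\}$.

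I do not expect a genuine obstacle here: the argument is an assembly of two previously established inequalities. The only points requiring care are bookkeeping ones, namely verifying that both $\delta_1$ and $\delta_2$ descend to degree-$1$ endomorphisms of $\Tot^\bullet B^{\bullet,\bullet}$ so that the $\Z$-graded Theorem \ref{thm:disug-frol} applies verbatim, and confirming via Remark \ref{rem:z2-grad} that the per-total-degree finiteness of the Dolbeault-type cohomologies transfers correctly between the $\Z^2$-graded and total-graded pictures, so that all intermediate cohomology spaces are finite-dimensional and the inequalities may be combined termwise in each degree.
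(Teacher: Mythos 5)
Your proposal is correct and is essentially the paper's own argument: the paper derives Corollary \ref{cor:frolicher-like-double-complexes} precisely by combining Theorem \ref{thm:disug-frol} (applied so that $\delta_1$ and $\delta_2$ both have total degree $1$ on $\Tot^\bullet B^{\bullet,\bullet}$, with the identification $\Tot^\bullet H^{\bullet,\bullet}_{\sharp}\left(B^{\bullet,\bullet}\right)=H^\bullet_{\sharp}\left(\Tot^\bullet B^{\bullet,\bullet}\right)$ of Remark \ref{rem:z2-grad}) with the classical Fr\"olicher inequality of Proposition \ref{prop:frolicher-classico}, summing the two instances of the latter to get the factor $2$. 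Your bookkeeping on the transfer of the finiteness hypotheses and on the validity for both signs $\pm$ is also accurate, so there is nothing to fix.
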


\section{A characterization of $\delta_1\delta_2$-Lemma by means of the inequality {\itshape à la} Fr\"olicher}

With the aim to characterize the validity of the $\delta_1\delta_2$-Lemma in terms of the dimensions of the cohomologies $H^\bullet_{\left( \delta_1 , \delta_2 ; \delta_1\delta_2 \right)}\left(A^\bullet\right)$ and $H^\bullet_{\left( \delta_1\delta_2 ; \delta_1 , \delta_2 \right)}\left(A^\bullet\right)$, we need the following lemmata.

\begin{lemma}\label{lemma:BC-dr-surj}
 Let $B^{\bullet,\bullet}$ be a $\Z^2$-graded $\mathbb{K}$-vector space endowed with two endomorphisms $\delta_1 \in \End^{1,0}\left(B^{\bullet,\bullet}\right)$ and $\delta_2 \in \End^{0,1}\left(B^{\bullet,\bullet}\right)$ such that $\delta_1^2 = \delta_2^2 = \delta_1\delta_2+\delta_2\delta_1 = 0$. If
 $$ \frac{\imm\delta_1 \cap \imm\delta_2}{\imm\delta_1\delta_2} \;=\; \{0\} \;, $$
 then the natural map $\iota\colon \Tot^\bullet H^{\bullet,\bullet}_{\left( \delta_1 , \delta_2 ; \delta_1\delta_2 \right)}\left(B^{\bullet,\bullet}\right) \to H^\bullet_{\left( \delta_1+\delta_2 ; \delta_1+\delta_2 \right)}\left(\Tot^\bullet B^{\bullet,\bullet}\right)$ induced by the identity is surjective.
\end{lemma}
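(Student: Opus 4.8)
The plan is to prove surjectivity of $\iota$ by a staircase (zig-zag) reduction on the bidegree decomposition of a total cocycle. First I would record that $\iota$ is well defined: if $\alpha \in \ker\delta_1 \cap \ker\delta_2$ then $(\delta_1+\delta_2)\alpha = 0$, and $\imm\delta_1\delta_2 \subseteq \imm(\delta_1+\delta_2)$ because $\delta_1\delta_2\beta = (\delta_1+\delta_2)(\delta_2\beta)$. Surjectivity then amounts to the following: \emph{every} $x \in \Tot^k B^{\bullet,\bullet}$ with $(\delta_1+\delta_2)x = 0$ is cohomologous, modulo $\imm(\delta_1+\delta_2)$, to an element of $\ker\delta_1 \cap \ker\delta_2$. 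I would write $x = \sum_{p+q=k} x^{p,q}$, a \emph{finite} sum by the definition of $\Tot$ (so that no boundedness hypothesis is needed), and unwind $(\delta_1+\delta_2)x = 0$ into the zig-zag relations $\delta_1 x^{p-1,q+1} + \delta_2 x^{p,q} = 0$, one for each bidegree.

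Next I would reduce $x$ from the top down. Let $m$ be the largest index with $x^{m,k-m} \neq 0$. The topmost relation forces $\delta_1 x^{m,k-m} = 0$, and the next relation gives $\delta_2 x^{m,k-m} = -\delta_1 x^{m-1,k-m+1} \in \imm\delta_1 \cap \imm\delta_2$. Here is where the hypothesis enters: since $\frac{\imm\delta_1\cap\imm\delta_2}{\imm\delta_1\delta_2}=\{0\}$ is a graded identity, it holds in each bidegree, so there is $w \in B^{m-1,k-m}$ with $\delta_2 x^{m,k-m} = \delta_1\delta_2 w$. Replacing $x$ by the cohomologous cocycle $x + (\delta_1+\delta_2)w$ alters only the components in bidegrees $(m,k-m)$ and $(m-1,k-m+1)$, and, using $\delta_2\delta_1 = -\delta_1\delta_2$, makes the new top component $x^{m,k-m} + \delta_1 w$ lie in $\ker\delta_1 \cap \ker\delta_2$.

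I would then iterate this as a downward induction. Assume $x$ has been arranged so that $x^{p,k-p} \in \ker\delta_1 \cap \ker\delta_2$ for all $p > j$. The relation in bidegree $(j+1,k-j)$, together with $\delta_2 x^{j+1,k-j-1} = 0$, yields $\delta_1 x^{j,k-j} = 0$, while the relation in bidegree $(j,k-j+1)$ places $\delta_2 x^{j,k-j}$ into $\imm\delta_1 \cap \imm\delta_2 = \imm\delta_1\delta_2$; a correction by $(\delta_1+\delta_2)w$ with $w \in B^{j-1,k-j}$ then fixes the component at level $j$ (both its $\delta_1$- and $\delta_2$-closedness, again via $\delta_2\delta_1 = -\delta_1\delta_2$) without touching the already-fixed levels $p > j$, since $(\delta_1+\delta_2)w$ lives only in levels $j$ and $j-1$. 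After finitely many steps every component lies in $\ker\delta_1 \cap \ker\delta_2$, proving surjectivity of $\iota$.

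The main point to get right — and the only place the hypothesis is used — is the solvability of $\delta_2 x^{p,k-p} = \delta_1\delta_2 w$ in the correct bidegree, together with the bookkeeping that the correction $(\delta_1+\delta_2)w$ disturbs only levels $\leq j$, so that the downward sweep terminates and never reopens a component it has already closed. I expect the sign conventions coming from $\delta_2\delta_1 = -\delta_1\delta_2$ to be the easiest thing to get wrong; everything else is routine once the zig-zag relations are written out.
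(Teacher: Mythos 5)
Your proof is correct, but it takes a more laborious route than the paper's, which avoids the bidegree decomposition altogether. The paper's argument is a single global correction: $(\delta_1+\delta_2)(x)=0$ literally says $\delta_1(x)=-\delta_2(x)$, so this one element of $\Tot^{\bullet+1}B^{\bullet,\bullet}$ lies in $\imm\delta_1\cap\imm\delta_2=\imm\delta_1\delta_2$ (the hypothesis, stated globally, applies to total elements because all four subspaces involved are bigraded — the same observation you make bidegree-wise); picking one $y$ with $\delta_1\delta_2(y)=\delta_1(x)=-\delta_2(x)$, the single representative $x-(\delta_1+\delta_2)(y)$ is simultaneously $\delta_1$- and $\delta_2$-closed, since $\delta_1\left(x-(\delta_1+\delta_2)(y)\right)=\delta_1(x)-\delta_1\delta_2(y)=0$ and $\delta_2\left(x-(\delta_1+\delta_2)(y)\right)=\delta_2(x)+\delta_1\delta_2(y)=0$. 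So your entire staircase collapses to one step, with the anticommutation sign used exactly once and no induction or termination bookkeeping. Your zig-zag version does go through — the inductive step is sound — but the one point you only gesture at, termination of the downward sweep, deserves a sentence: a correction at step $j$ touches only level $j-1$, and when the sweep reaches the lowest level $\ell$ of the current support, the bottom zig-zag relation $\delta_1 x^{\ell-1,k-\ell+1}+\delta_2 x^{\ell,k-\ell}=0$ with $x^{\ell-1,k-\ell+1}=0$ shows that component is automatically $\delta_2$-closed, so the correction there may be taken zero and the support never spreads below the original bottom. That is precisely the bookkeeping the global one-shot correction renders unnecessary; what your version buys in exchange is an explicit display of how the hypothesis is used bidegree-by-bidegree, but nothing in the lemma requires that granularity.
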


\begin{proof}
Let $\mathfrak{a} :=: \left[x\right] \in H^\bullet_{\left( \delta_1+\delta_2 ; \delta_1+\delta_2 \right)}\left(\Tot^\bullet B^{\bullet,\bullet}\right)$. Since $\delta_1(x)+\delta_2(x)=0$ and $\imm\delta_1 \cap \imm\delta_2 = \imm\delta_1\delta_2$, then we have  $\delta_1(x) = -\delta_2(x) \in \imm\delta_1\cap\imm\delta_2 \;=\; \imm\delta_1\delta_2$;
let $y \in \Tot^{\bullet-1} B^{\bullet,\bullet}$ be such that $\delta_1(x) = \delta_1\delta_2(y) \;=\; -\delta_2(x)$.
Hence, consider $\mathfrak{a} = \left[x\right] = \left[x-\left(\delta_1+\delta_2\right)(y)\right] \in H^\bullet_{\left( \delta_1+\delta_2 ; \delta_1+\delta_2 \right)}\left(\Tot^\bullet B^{\bullet,\bullet}\right)$,
and note that
$\mathfrak{a} = \iota\left(\left[x-\left(\delta_1+\delta_2\right)(y)\right]\right)$ where $\left[x-\left(\delta_1+\delta_2\right)(y)\right] \in \Tot^\bullet H^{\bullet,\bullet}_{\left( \delta_1 , \delta_2 ; \delta_1\delta_2 \right)}\left(B^{\bullet,\bullet}\right)$,
since $\delta_1\left(x-\left(\delta_1+\delta_2\right)(y)\right)=0$ and $\delta_2\left(x-\left(\delta_1+\delta_2\right)(y)\right)=0$.
\end{proof}

\begin{lemma}\label{lemma:dR-A-inj}
 Let $B^{\bullet,\bullet}$ be a $\Z^2$-graded $\mathbb{K}$-vector space endowed with two endomorphisms $\delta_1 \in \End^{1,0}\left(B^{\bullet,\bullet}\right)$ and $\delta_2 \in \End^{0,1}\left(B^{\bullet,\bullet}\right)$ such that $\delta_1^2 = \delta_2^2 = \delta_1\delta_2+\delta_2\delta_1 = 0$. If
 $$ \frac{\ker\delta_1\delta_2}{\ker\delta_1 + \ker\delta_2} \;=\; \{0\} \;, $$
 then the natural map $\iota\colon H^\bullet_{\left( \delta_1+\delta_2 ; \delta_1+\delta_2 \right)}\left(\Tot^\bullet B^{\bullet,\bullet}\right) \to \Tot^\bullet H^{\bullet,\bullet}_{\left( \delta_1\delta_2 ; \delta_1 , \delta_2 \right)}\left(B^{\bullet,\bullet}\right)$ induced by the identity is injective.
\end{lemma}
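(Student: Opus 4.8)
The plan is to prove injectivity directly, by unwinding what it means for a class in $H^\bullet_{\left( \delta_1+\delta_2 ; \delta_1+\delta_2 \right)}\left(\Tot^\bullet B^{\bullet,\bullet}\right)$ to become trivial in the Aeppli-type cohomology, and then to exploit the hypothesis $\ker\delta_1\delta_2 = \ker\delta_1 + \ker\delta_2$ to upgrade a representative lying in $\imm\delta_1 + \imm\delta_2$ to one lying in $\imm\left(\delta_1+\delta_2\right)$. This is formally dual to the argument carried out in Lemma \ref{lemma:BC-dr-surj}, so I expect the same level of elementary diagram chasing.

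First I would record that $\iota$ is well defined: if $\left(\delta_1+\delta_2\right)(x)=0$, then $\delta_1(x)=-\delta_2(x)$, whence $\delta_1\delta_2(x)=-\delta_1^2(x)=0$, so $\ker\left(\delta_1+\delta_2\right)\subseteq\ker\delta_1\delta_2$, while $\imm\left(\delta_1+\delta_2\right)\subseteq\imm\delta_1+\imm\delta_2$ holds trivially. I would then take a class $\mathfrak{a}=\left[x\right]$ with $\iota(\mathfrak{a})=0$, which means $x\in\ker\left(\delta_1+\delta_2\right)$ together with $x=\delta_1(a)+\delta_2(b)$ for suitable $a,b\in\Tot^{\bullet-1}B^{\bullet,\bullet}$, and the goal becomes showing $x\in\imm\left(\delta_1+\delta_2\right)$.

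The key computation is to apply $\delta_1$ and $\delta_2$ to $x=\delta_1(a)+\delta_2(b)$, using $\delta_1^2=\delta_2^2=0$ and $\delta_2\delta_1=-\delta_1\delta_2$, obtaining $\delta_1(x)=\delta_1\delta_2(b)$ and $\delta_2(x)=-\delta_1\delta_2(a)$. The closedness relation $\left(\delta_1+\delta_2\right)(x)=0$ then reads $\delta_1\delta_2(b-a)=0$, that is, $b-a\in\ker\delta_1\delta_2$. This is exactly the point where the hypothesis enters: by $\ker\delta_1\delta_2=\ker\delta_1+\ker\delta_2$, I would write $b-a=u+v$ with $\delta_1(u)=0$ and $\delta_2(v)=0$.

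Finally, substituting $b=a+u+v$ into $x=\delta_1(a)+\delta_2(b)$ and discarding the vanishing term $\delta_2(v)=0$ gives $x=\delta_1(a)+\delta_2(a)+\delta_2(u)$; since $\delta_1(u)=0$, the last summand equals $\left(\delta_1+\delta_2\right)(u)$, so $x=\left(\delta_1+\delta_2\right)(a+u)\in\imm\left(\delta_1+\delta_2\right)$, forcing $\mathfrak{a}=0$ and proving injectivity. The only genuine content is recognizing that $b-a$ lands in $\ker\delta_1\delta_2$ so that the hypothesis may be applied; I expect that recognition, rather than any computation, to be the crux.
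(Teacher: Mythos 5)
Your proof is correct and coincides with the paper's own argument: the paper likewise deduces $\delta_1\delta_2(z-y)=0$ from closedness of $x=\delta_1(y)+\delta_2(z)$, splits $z-y=u+v$ with $u\in\ker\delta_1$, $v\in\ker\delta_2$ via the hypothesis, and rewrites $x=\left(\delta_1+\delta_2\right)(y+u)$. Your identification of the membership $b-a\in\ker\delta_1\delta_2$ as the crux is exactly where the paper applies the hypothesis as well.
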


\begin{proof}
Let $\mathfrak{a} :=: \left[x\right] \in H^\bullet_{\left( \delta_1+\delta_2 ; \delta_1+\delta_2 \right)}\left(\Tot^\bullet B^{\bullet,\bullet}\right)$.
Suppose that $\iota(\mathfrak{a}) = \left[0\right] \in \Tot^\bullet H^{\bullet,\bullet}_{\left( \delta_1\delta_2 ; \delta_1 , \delta_2 \right)}\left(B^{\bullet,\bullet}\right)$, that is, there exist $y \in \Tot^{\bullet-1}B^{\bullet,\bullet}$ and $z \in \Tot^{\bullet-1}B^{\bullet,\bullet}$ such that $x = \delta_1(y) + \delta_2(z)$. Since $\left(\delta_1+\delta_2\right)(x)=0$ and $\ker\delta_1\delta_2 = \ker\delta_1 + \ker\delta_2$, it follows that $\delta_1\delta_2\left(z-y\right) = 0$, that is, $z-y \in \ker\delta_1\delta_2 = \ker\delta_1 + \ker\delta_2$. Let $u \in \ker\delta_1$ and $v \in \ker\delta_2$ be such that $z-y = u+v$. Then, one has that $x = \delta_1(y) + \delta_2(z) = \delta_1(y) + \delta_2(y+u+v) = \left(\delta_1+\delta_2\right)(y+u) \in \imm\left(\delta_1+\delta_2\right)$, proving that $\mathfrak{a} = \left[0\right] \in H^\bullet_{\left( \delta_1+\delta_2 ; \delta_1+\delta_2 \right)}\left(\Tot^\bullet B^{\bullet,\bullet}\right)$.
\end{proof}

We can now prove the following characterization of the $\delta_1\delta_2$-Lemma for double complexes in terms of the equality in \eqref{eq:disug-frol-double-complexes}.

\begin{thm}\label{thm:caratt-deldelbar-lemma-double}
 Let $B^{\bullet,\bullet}$ be a bounded $\Z^2$-graded $\mathbb{K}$-vector space endowed with two endomorphisms $\delta_1 \in \End^{1,0}\left(B^{\bullet,\bullet}\right)$ and $\delta_2 \in \End^{0,1}\left(B^{\bullet,\bullet}\right)$ such that $\delta_1^2 = \delta_2^2 = \delta_1\delta_2+\delta_2\delta_1 = 0$.
 Suppose that
 $$ \dim_\K H^{\bullet,\bullet}_{\left( \delta_1 ; \delta_1 \right)}\left(B^{\bullet,\bullet}\right) < +\infty  \qquad \text{ and } \qquad  \dim_\K H^{\bullet,\bullet}_{\left( \delta_2 ; \delta_2 \right)}\left(B^{\bullet,\bullet}\right) \;<\; +\infty \;.$$
 The following conditions are equivalent:
 \begin{enumerate}
  \item\label{item:caratt-bi-1} $B^{\bullet,\bullet}$ satisfies the $\delta_1\delta_2$-Lemma;
  \item\label{item:caratt-bi-2} the equality
        \begin{eqnarray*}
        \lefteqn{ \dim_\K \Tot^{\bullet} H^{\bullet,\bullet}_{\left( \delta_1 , \delta_2 ; \delta_1\delta_2 \right)}\left(B^{\bullet,\bullet}\right) + \dim_\K \Tot^{\bullet} H^{\bullet,\bullet}_{\left( \delta_1\delta_2 ; \delta_1 , \delta_2 \right)}\left(B^{\bullet,\bullet}\right) } \\[5pt]
        &=& 2\, \dim_\K H^{\bullet}_{\left( \delta_1+\delta_2 ; \delta_1+\delta_2 \right)}\left(\Tot^\bullet B^{\bullet,\bullet}\right) \;.
        \end{eqnarray*}
        holds.
 \end{enumerate}
\end{thm}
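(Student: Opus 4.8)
The plan is to read the dimension identity established inside the proof of Theorem~\ref{thm:disug-frol} degree by degree, and to combine it with the Fr\"olicher inequality (Proposition~\ref{prop:frolicher-classico}) and with Lemmas~\ref{lemma:BC-dr-surj} and~\ref{lemma:dR-A-inj}. Set $BC^k := \dim_\K \Tot^k H^{\bullet,\bullet}_{\left(\delta_1,\delta_2;\delta_1\delta_2\right)}\left(B^{\bullet,\bullet}\right)$, $A^k := \dim_\K \Tot^k H^{\bullet,\bullet}_{\left(\delta_1\delta_2;\delta_1,\delta_2\right)}\left(B^{\bullet,\bullet}\right)$ and $b^k := \dim_\K H^{k}_{\left(\delta_1+\delta_2;\delta_1+\delta_2\right)}\left(\Tot^\bullet B^{\bullet,\bullet}\right)$, and write $V := \frac{\imm\delta_1\cap\imm\delta_2}{\imm\delta_1\delta_2}$ and $W := \frac{\ker\delta_1\delta_2}{\ker\delta_1+\ker\delta_2}$. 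Since $\delta_1$ and $\delta_2$ are homogeneous of the distinct bidegrees $(1,0)\neq(0,1)$, the additional hypothesis of Lemma~\ref{lemma:equiv} is automatically available here, so conditions~\eqref{item:equiv-5}--\eqref{item:equiv-8} of that lemma may be used freely. The starting observation is that \eqref{eq:BC-A-delta1-delta2} is an alternating-sum identity of the $\Z^2$-graded Varouchas exact sequences in which every connecting map is induced by the identity, so that no degree shift occurs and it refines to each total degree:
\begin{equation*}
A^k + BC^k \;=\; \dim_\K \Tot^k H^{\bullet,\bullet}_{\left(\delta_1;\delta_1\right)}\left(B^{\bullet,\bullet}\right) + \dim_\K \Tot^k H^{\bullet,\bullet}_{\left(\delta_2;\delta_2\right)}\left(B^{\bullet,\bullet}\right) + \dim_\K \Tot^k V + \dim_\K \Tot^k W \;.
\end{equation*}
By Proposition~\ref{prop:frolicher-classico} each of the first two summands on the right is at least $b^k$; hence $A^k+BC^k\geq 2\,b^k$, and for a fixed $k$ the equality holds if and only if both Fr\"olicher spectral sequences degenerate in degree $k$ (the first two summands equal $b^k$) and the two defect spaces satisfy $\Tot^k V = \{0\}$ and $\Tot^k W = \{0\}$.

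For \eqref{item:caratt-bi-1}$\Rightarrow$\eqref{item:caratt-bi-2} I would invoke that, as recalled in the Introduction, the $\delta_1\delta_2$-Lemma is equivalent to all the natural maps of the diagram of \S\ref{subsec:cohom-complexes} being isomorphisms; in particular $\Tot^k H^{\bullet,\bullet}_{\left(\delta_1,\delta_2;\delta_1\delta_2\right)} \cong H^k_{\left(\delta_1+\delta_2;\delta_1+\delta_2\right)}\left(\Tot^\bullet B^{\bullet,\bullet}\right) \cong \Tot^k H^{\bullet,\bullet}_{\left(\delta_1\delta_2;\delta_1,\delta_2\right)}$, so $BC^k = b^k = A^k$ and the desired equality follows in every degree.

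The substantial direction is \eqref{item:caratt-bi-2}$\Rightarrow$\eqref{item:caratt-bi-1}. Assuming equality in every degree, the starting observation forces $\Tot^k V = \{0\}$ and $\Tot^k W = \{0\}$ for all $k$, hence $V = \{0\}$ and $W = \{0\}$. Then Lemma~\ref{lemma:BC-dr-surj} makes $\iota\colon \Tot H^{\bullet,\bullet}_{\left(\delta_1,\delta_2;\delta_1\delta_2\right)} \to H_{\left(\delta_1+\delta_2;\delta_1+\delta_2\right)}\left(\Tot\right)$ surjective, so $BC^k \geq b^k$, while Lemma~\ref{lemma:dR-A-inj} makes $H_{\left(\delta_1+\delta_2;\delta_1+\delta_2\right)}\left(\Tot\right) \to \Tot H^{\bullet,\bullet}_{\left(\delta_1\delta_2;\delta_1,\delta_2\right)}$ injective, so $A^k \geq b^k$. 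Substituting $BC^k\geq b^k$ and $A^k\geq b^k$ into $A^k + BC^k = 2\,b^k$ forces $BC^k = A^k = b^k$ for every $k$. The decisive point is then a bare dimension count: a surjective linear map between finite-dimensional spaces of equal dimension is an isomorphism, so the surjection $\iota$ above is in fact injective; this is precisely condition~\eqref{item:equiv-5} of Lemma~\ref{lemma:equiv}, which --- the additional hypothesis being available --- is equivalent to the $\delta_1\delta_2$-Lemma.

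I expect the only genuine obstacles to be bookkeeping ones: verifying that \eqref{eq:BC-A-delta1-delta2} really refines to each total degree (which hinges on the identity-induced, shift-free nature of the Varouchas maps), and ensuring the finiteness hypotheses propagate so that the dimension count is legitimate. The latter is already guaranteed by $\dim_\K H^{\bullet,\bullet}_{\left(\delta_1;\delta_1\right)}(B^{\bullet,\bullet}) < +\infty$ and $\dim_\K H^{\bullet,\bullet}_{\left(\delta_2;\delta_2\right)}(B^{\bullet,\bullet}) < +\infty$, exactly as in the proof of Theorem~\ref{thm:disug-frol}.
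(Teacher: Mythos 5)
Your proposal is correct and follows essentially the same route as the paper's proof: the degree-wise reading of the identity \eqref{eq:BC-A-delta1-delta2} (legitimate, since the Varouchas sequences consist of identity-induced, degree-preserving maps of $\Z^2$-graded spaces) forces $\frac{\imm\delta_1\cap\imm\delta_2}{\imm\delta_1\delta_2}=\{0\}$ and $\frac{\ker\delta_1\delta_2}{\ker\delta_1+\ker\delta_2}=\{0\}$, after which Lemma \ref{lemma:BC-dr-surj}, Lemma \ref{lemma:dR-A-inj}, the finite-dimensional count turning the surjection into an isomorphism, and Lemma \ref{lemma:equiv} (whose additional hypothesis is indeed automatic for bidegrees $(1,0)\neq(0,1)$) conclude exactly as in the paper. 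The only cosmetic difference is in the direction \eqref{item:caratt-bi-1} $\Rightarrow$ \eqref{item:caratt-bi-2}, where you invoke that the $\delta_1\delta_2$-Lemma makes all natural maps isomorphisms, while the paper combines the two inequalities supplied by Lemma \ref{lemma:equiv} with Corollary \ref{cor:frolicher-like-double-complexes}; both arguments are immediate.
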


\begin{proof}
\paragrafoo{\eqref{item:caratt-bi-1} $\Rightarrow$ \eqref{item:caratt-bi-2}}
By Lemma \ref{lemma:equiv}, it follows that 
$$ \dim_\K \Tot^{\bullet} H^{\bullet,\bullet}_{\left( \delta_1 , \delta_2 ; \delta_1\delta_2 \right)}\left(B^{\bullet,\bullet}\right) \;\leq\; \dim_\K H^{\bullet}_{\left( \delta_1+\delta_2 ; \delta_1+\delta_2 \right)}\left(\Tot^\bullet B^{\bullet,\bullet}\right) $$
and 
$$ \dim_\K \Tot^{\bullet} H^{\bullet,\bullet}_{\left( \delta_1\delta_2 ; \delta_1 , \delta_2 \right)}\left(B^{\bullet,\bullet}\right) \;\leq\; \dim_\K H^{\bullet}_{\left( \delta_1+\delta_2 ; \delta_1+\delta_2 \right)}\left(\Tot^\bullet B^{\bullet,\bullet}\right) \;.$$
By Corollary \ref{cor:frolicher-like-double-complexes}, it follows that
$$ \dim_\K \Tot^\bullet H^{\bullet,\bullet}_{\left( \delta_1 , \delta_2 ; \delta_1\delta_2 \right)}\left(B^{\bullet,\bullet}\right) + \dim_\K \Tot^\bullet H^{\bullet,\bullet}_{\left( \delta_1\delta_2 ; \delta_1 , \delta_2 \right)}\left(B^{\bullet,\bullet}\right) \;\geq\; 2\, \dim_\K H^\bullet_{\left( \delta_1 + \delta_2 ; \delta_1 + \delta_2 \right)}\left(\Tot^\bullet B^{\bullet,\bullet}\right) \;.$$
Hence actually the equality holds.

\paragrafoo{\eqref{item:caratt-bi-2} $\Rightarrow$ \eqref{item:caratt-bi-1}}
Since, by \eqref{eq:BC-A-delta1-delta2} and Proposition \ref{prop:frolicher-classico}, it holds
\begin{eqnarray*}
\lefteqn{\dim_\K \Tot^{\bullet} H^{\bullet,\bullet}_{\left( \delta_1 , \delta_2 ; \delta_1\delta_2 \right)}\left(B^{\bullet,\bullet}\right) + \dim_\K \Tot^{\bullet} H^{\bullet,\bullet}_{\left( \delta_1\delta_2 ; \delta_1 , \delta_2 \right)}\left(B^{\bullet,\bullet}\right)} \\[5pt]
 &=& \dim_\K \Tot^{\bullet} H^{\bullet,\bullet}_{\left( \delta_1 ; \delta_1 \right)}\left(B^{\bullet,\bullet}\right) + \dim_\K \Tot^{\bullet} H^{\bullet,\bullet}_{\left( \delta_2 ; \delta_2 \right)}\left(B^{\bullet,\bullet}\right) \\[5pt]
 && + \dim_\K \frac{\imm\delta_1 \cap \imm\delta_2}{\imm\delta_1\delta_2} + \dim_\K \frac{\ker\delta_1\delta_2}{\ker\delta_1 + \ker\delta_2} \\[5pt]
 &\geq& 2\, \dim_\K H^{\bullet}_{\left( \delta_1+\delta_2 ; \delta_1+\delta_2 \right)}\left(\Tot^\bullet B^{\bullet,\bullet}\right) \;,
\end{eqnarray*}
then, by the hypothesis, it follows that
$$ \frac{\imm\delta_1 \cap \imm\delta_2}{\imm\delta_1\delta_2} \;=\; \left\{0\right\} \qquad \text{ and } \qquad \frac{\ker\delta_1\delta_2}{\ker\delta_1 + \ker\delta_2} \;=\; \left\{0\right\} \;.$$
By Lemma \ref{lemma:BC-dr-surj}, one gets that the natural map $H^\bullet_{\left( \delta_1 , \delta_2 ; \delta_1\delta_2 \right)}\left(\Tot^\bullet B^{\bullet,\bullet}\right) \to H^\bullet_{\left( \delta_1+\delta_2 ; \delta_1+\delta_2 \right)}\left(\Tot^\bullet B^{\bullet,\bullet}\right)$ induced by the identity is surjective; by Lemma \ref{lemma:dR-A-inj}, one gets that the natural map $H^\bullet_{\left( \delta_1+\delta_2 ; \delta_1+\delta_2 \right)}\left(\Tot^\bullet B^{\bullet,\bullet}\right) \to H^\bullet_{\left( \delta_1\delta_2 ; \delta_1 , \delta_2 \right)}\left(\Tot^\bullet B^{\bullet,\bullet}\right)$ induced by the identity is injective. In particular, one has that
$$ \dim_\K \Tot^{\bullet} H^{\bullet,\bullet}_{\left( \delta_1 , \delta_2 ; \delta_1\delta_2 \right)}\left(B^{\bullet,\bullet}\right) \;\geq\; \dim_\K H^{\bullet}_{\left( \delta_1+\delta_2 ; \delta_1+\delta_2 \right)}\left(\Tot^\bullet B^{\bullet,\bullet}\right) $$
and that
$$ \dim_\K \Tot^{\bullet} H^{\bullet,\bullet}_{\left( \delta_1\delta_2 ; \delta_1 , \delta_2 \right)}\left(B^{\bullet,\bullet}\right) \;\geq\; \dim_\K H^{\bullet}_{\left( \delta_1+\delta_2 ; \delta_1+\delta_2 \right)}\left(\Tot^\bullet B^{\bullet,\bullet}\right) \;.$$
Hence, by the hypothesis, it holds in fact that
$$ \dim_\K \Tot^{\bullet} H^{\bullet,\bullet}_{\left( \delta_1 , \delta_2 ; \delta_1\delta_2 \right)}\left(B^{\bullet,\bullet}\right) \;=\; \dim_\K H^{\bullet}_{\left( \delta_1+\delta_2 ; \delta_1+\delta_2 \right)}\left(\Tot^\bullet B^{\bullet,\bullet}\right) \;=\; \dim_\K \Tot^{\bullet} H^{\bullet,\bullet}_{\left( \delta_1\delta_2 ; \delta_1 , \delta_2 \right)}\left(B^{\bullet,\bullet}\right) \;.$$
Since $H^{\bullet,\bullet}_{\left( \delta_1 , \delta_2 ; \delta_1\delta_2 \right)}\left(B^{\bullet,\bullet}\right)$ and $H^{\bullet,\bullet}_{\left( \delta_1\delta_2 ; \delta_1 , \delta_2 \right)}\left(B^{\bullet,\bullet}\right)$ are finite-dimensional by hypothesis, it follows that the natural maps $H^\bullet_{\left( \delta_1 , \delta_2 ; \delta_1\delta_2 \right)}\left(\Tot^\bullet B^{\bullet,\bullet}\right) \to H^\bullet_{\left( \delta_1+\delta_2 ; \delta_1+\delta_2 \right)}\left(\Tot^\bullet B^{\bullet,\bullet}\right)$ and $H^\bullet_{\left( \delta_1+\delta_2 ; \delta_1+\delta_2 \right)}\left(\Tot^\bullet B^{\bullet,\bullet}\right) \to H^\bullet_{\left( \delta_1\delta_2 ; \delta_1 , \delta_2 \right)}\left(\Tot^\bullet B^{\bullet,\bullet}\right)$ induced by the identity are in fact isomorphisms. By Lemma \ref{lemma:equiv}, one gets the theorem.
\end{proof}

\medskip

In order to apply Theorem \ref{thm:caratt-deldelbar-lemma-double} to $\Z$-graded $\K$-vector spaces to get geometric applications, e.g., for compact symplectic manifolds, we need to record the following corollaries.

\begin{cor}\label{cor:charact-hodge-frolicher-double-1}
 Let $A^\bullet$ be a bounded $\Z$-graded $\mathbb{K}$-vector space endowed with two endomorphisms $\delta_1 \in \End^{\hat\delta_1}\left(A^\bullet\right)$ and $\delta_2 \in \End^{\hat\delta_2}\left(A^\bullet\right)$ such that $\delta_1^2 = \delta_2^2 = \delta_1\delta_2+\delta_2\delta_1 = 0$. Denote the greatest common divisor of $\hat\delta_1$ and $\hat\delta_2$ by $\GCD{\hat\delta_1}{\hat\delta_2}$.
 Suppose that
 $$ \dim_\K H^{\bullet}_{\left( \delta_1 ; \delta_1 \right)}\left(A^{\bullet}\right) < +\infty  \qquad \text{ and } \qquad  \dim_\K H^{\bullet}_{\left( \delta_2 ; \delta_2 \right)}\left(A^{\bullet}\right) \;<\; +\infty \;.$$
 The following conditions are equivalent:
 \begin{enumerate}
  \item $A^{\GCD{\hat\delta_1}{\hat\delta_2}\,\bullet}$ satisfies the $\delta_1\delta_2$-Lemma;
  \item the equality
        \begin{eqnarray*}
        \lefteqn{\sum_{p+q=\bullet} \left( \dim_\K H^{\hat\delta_1 \, p + \hat\delta_2 \, q}_{\left( \delta_1 , \delta_2 ; \delta_1\delta_2 \right)}\left(A^{\bullet}\right) + \dim_\K H^{\hat\delta_1 \, p + \hat\delta_2 \, q}_{\left( \delta_1\delta_2 ; \delta_1 , \delta_2 \right)}\left(A^{\bullet}\right) \right)} \\[5pt]
        &=& 2\, \dim_\K H^\bullet_{\left( \delta_1\otimes_\K\id + \delta_2\otimes_\K\beta ; \delta_1\otimes_\K\id + \delta_2\otimes_\K\beta \right)}\left(\Tot^\bullet\Doub^{\bullet,\bullet}A^\bullet\right) \;.
        \end{eqnarray*}
        holds.
 \end{enumerate}
\end{cor}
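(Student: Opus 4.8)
The plan is to reduce the statement to Theorem~\ref{thm:caratt-deldelbar-lemma-double} by passing to the canonical double complex. Explicitly, I would set $B^{\bullet,\bullet} := \Doub^{\bullet,\bullet}\left(A^\bullet\right)$, endowed with $\delta_1\otimes_\K\id \in \End^{1,0}\left(B^{\bullet,\bullet}\right)$ and $\delta_2\otimes_\K\beta \in \End^{0,1}\left(B^{\bullet,\bullet}\right)$, and apply Theorem~\ref{thm:caratt-deldelbar-lemma-double} to $B^{\bullet,\bullet}$, reading off both of its conditions through the dictionary furnished by Lemma~\ref{lemma:cohom-a-doub} and Lemma~\ref{lemma:deldelbar-lemma-a-doub}. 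Observe first that on $\Tot^\bullet B^{\bullet,\bullet}$ both $\delta_1\otimes_\K\id$ and $\delta_2\otimes_\K\beta$ have degree $1$, so that $H^\bullet_{\left( \delta_1\otimes_\K\id + \delta_2\otimes_\K\beta ; \delta_1\otimes_\K\id + \delta_2\otimes_\K\beta \right)}\left(\Tot^\bullet B^{\bullet,\bullet}\right)$ is genuinely $\Z$-graded; this is why the $+$-version of the theorem is the relevant one.

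First I would transport the hypotheses. By Lemma~\ref{lemma:cohom-a-doub} one has, for each of the four relevant $\sharp$, a natural isomorphism $H^{\bullet_1,\bullet_2}_{\sharp_{\delta_1\otimes_\K\id,\delta_2\otimes_\K\beta}}\left(B^{\bullet,\bullet}\right) \simeq H^{\hat\delta_1\,\bullet_1+\hat\delta_2\,\bullet_2}_{\sharp_{\delta_1,\delta_2}}\left(A^\bullet\right)\otimes_\K\K\,\beta^{\bullet_2}$; in particular the assumptions $\dim_\K H^\bullet_{\left( \delta_1 ; \delta_1 \right)}\left(A^\bullet\right) < +\infty$ and $\dim_\K H^\bullet_{\left( \delta_2 ; \delta_2 \right)}\left(A^\bullet\right) < +\infty$ force every bigraded component of $H_{\left( \delta_1\otimes_\K\id ; \delta_1\otimes_\K\id \right)}\left(B^{\bullet,\bullet}\right)$ and of $H_{\left( \delta_2\otimes_\K\beta ; \delta_2\otimes_\K\beta \right)}\left(B^{\bullet,\bullet}\right)$ to be finite-dimensional, which is exactly the finiteness required by Theorem~\ref{thm:caratt-deldelbar-lemma-double}. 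The two conditions then match directly: for condition~(i), Lemma~\ref{lemma:deldelbar-lemma-a-doub} says that $A^{\GCD{\hat\delta_1}{\hat\delta_2}\,\bullet}$ satisfies the $\delta_1\delta_2$-Lemma if and only if $B^{\bullet,\bullet}$ satisfies the $\left(\delta_1\otimes_\K\id\right)\left(\delta_2\otimes_\K\beta\right)$-Lemma, that is, condition~(i) of the theorem for $B^{\bullet,\bullet}$.

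For condition~(ii), the same isomorphism gives $\dim_\K \Tot^\bullet H_{\left( \delta_1\otimes_\K\id , \delta_2\otimes_\K\beta ; \left(\delta_1\otimes_\K\id\right)\left(\delta_2\otimes_\K\beta\right) \right)}\left(B^{\bullet,\bullet}\right) = \sum_{p+q=\bullet}\dim_\K H^{\hat\delta_1\,p+\hat\delta_2\,q}_{\left( \delta_1 , \delta_2 ; \delta_1\delta_2 \right)}\left(A^\bullet\right)$, together with the analogous identity for the Aeppli-type cohomology, while the right-hand side of the theorem's equality is literally $2\,\dim_\K H^\bullet_{\left( \delta_1\otimes_\K\id + \delta_2\otimes_\K\beta ; \delta_1\otimes_\K\id + \delta_2\otimes_\K\beta \right)}\left(\Tot^\bullet\Doub^{\bullet,\bullet}A^\bullet\right)$. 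Hence the equality in Theorem~\ref{thm:caratt-deldelbar-lemma-double} for $B^{\bullet,\bullet}$ coincides term by term with condition~(ii) of the corollary, and, granting the theorem's applicability, the asserted equivalence follows.

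The delicate point, and the only genuine obstacle, is that $\Doub^{\bullet,\bullet}A^\bullet$ is \emph{not} bounded even when $A^\bullet$ is, so Theorem~\ref{thm:caratt-deldelbar-lemma-double} cannot be quoted verbatim. What I would have to check is that boundedness enters that theorem only through Corollary~\ref{cor:frolicher-like-double-complexes} and the Fr\"olicher inequality of Proposition~\ref{prop:frolicher-classico}, whose spectral-sequence proofs require merely that the two filtrations of $\Tot^\bullet B^{\bullet,\bullet}$ be bounded on each total degree and that the $E_1$-pages be finite-dimensional. Since $A^\bullet$ has finite support, each component $\Tot^k\Doub^{\bullet,\bullet}A^\bullet$ meets only finitely many bidegrees --- the conditions $\bullet_1+\bullet_2=k$ and $\hat\delta_1\,\bullet_1+\hat\delta_2\,\bullet_2\in\mathrm{supp}\left(A^\bullet\right)$ leave finitely many solutions when $\hat\delta_1\neq\hat\delta_2$ --- so the induced filtrations are bounded degreewise, while the $E_1$-pages are finite-dimensional by the transported hypotheses. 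Thus the spectral-sequence machinery, and with it Theorem~\ref{thm:caratt-deldelbar-lemma-double}, remains valid on $B^{\bullet,\bullet}$; verifying that ``bounded'' may be weakened to ``bounded in each total degree'' throughout is the step I would write out most carefully, everything else being the bookkeeping recorded above.
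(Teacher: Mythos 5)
Your proposal follows exactly the paper's route: the paper's entire proof is the one-line citation ``The Corollary follows from Lemma~\ref{lemma:deldelbar-lemma-a-doub}, Theorem~\ref{thm:caratt-deldelbar-lemma-double}, and Lemma~\ref{lemma:cohom-a-doub}'', i.e.\ precisely your reduction to the canonical double complex $\Doub^{\bullet,\bullet}A^\bullet$ with the two transport lemmata. Your closing observation is the one genuinely valuable addition: $\Doub^{\bullet,\bullet}A^\bullet$ is indeed \emph{not} bounded, so Theorem~\ref{thm:caratt-deldelbar-lemma-double} (and Proposition~\ref{prop:frolicher-classico}, which the paper had already applied to $\Doub^{\bullet,\bullet}A^\bullet$ without comment) does not apply verbatim, and your fix --- that boundedness is only used to make the two filtrations bounded in each total degree, which holds here since for $\hat\delta_1\neq\hat\delta_2$ each line $\bullet_1+\bullet_2=k$ meets $\mathrm{supp}\left(A^{\hat\delta_1\bullet_1+\hat\delta_2\bullet_2}\right)$ in finitely many bidegrees --- is correct and repairs a gap the paper silently elides (note that in the degenerate case $\hat\delta_1=\hat\delta_2$, which the paper also ignores and which does not occur in its applications, this argument, and the statement itself, would need separate care).
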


\begin{proof}
The Corollary follows from Lemma \ref{lemma:deldelbar-lemma-a-doub}, Theorem \ref{thm:caratt-deldelbar-lemma-double}, and Lemma \ref{lemma:cohom-a-doub}.
\end{proof}

\begin{cor}\label{cor:charact-hodge-frolicher-double}
 Let $A^\bullet$ be a bounded $\Z$-graded $\mathbb{K}$-vector space endowed with two endomorphisms $\delta_1 \in \End^{\hat\delta_1}\left(A^\bullet\right)$ and $\delta_2 \in \End^{\hat\delta_2}\left(A^\bullet\right)$ such that $\delta_1^2 = \delta_2^2 = \delta_1\delta_2+\delta_2\delta_1 = 0$.
 Suppose that the greatest common divisor of $\hat\delta_1$ and $\hat\delta_2$ is $\GCD{\hat\delta_1}{\hat\delta_2}=1$, and that
 $$ \dim_\K H^{\bullet}_{\left( \delta_1 ; \delta_1 \right)}\left(A^{\bullet}\right) < +\infty  \qquad \text{ and } \qquad  \dim_\K H^{\bullet}_{\left( \delta_2 ; \delta_2 \right)}\left(A^{\bullet}\right) \;<\; +\infty \;.$$
 The following conditions are equivalent:
 \begin{enumerate}
  \item $A^{\bullet}$ satisfies the $\delta_1\delta_2$-Lemma;
  \item \begin{enumerate}[\itshape (a)]
        \item both the Hodge and Fr\"olicher spectral sequences of $\left(\Doub^{\bullet,\bullet}A^{\bullet} ,\, \delta_1\otimes_\K\id ,\, \delta_2\otimes_\K\beta\right)$ degenerate at the first level, equivalently, the equalities
        \begin{eqnarray*}
        \lefteqn{\dim_\K H^\bullet_{\left( \delta_1\otimes_\K\id + \delta_2\otimes_\K\beta ; \delta_1\otimes_\K\id + \delta_2\otimes_\K\beta \right)} \left(\Tot^\bullet\Doub^{\bullet,\bullet}A^\bullet\right)} \\[5pt]
        &=& \dim_\K \Tot^\bullet H^{\bullet,\bullet}_{\left( \delta_2\otimes_\K\beta ; \delta_2\otimes_\K\beta \right)}\left(\Doub^{\bullet,\bullet}A^{\bullet}\right) \\[5pt]
        &=& \dim_\K \Tot^\bullet H^{\bullet,\bullet}_{\left( \delta_1\otimes_\K\id ; \delta_1\otimes_\K\id \right)}\left(\Doub^{\bullet,\bullet}A^{\bullet}\right)
        \end{eqnarray*}
        hold;
        \item the equality
        \begin{eqnarray*}
        \lefteqn{\dim_\K H^{\bullet}_{\left( \delta_1 , \delta_2 ; \delta_1\delta_2 \right)}\left(A^{\bullet}\right) + \dim_\K H^{\bullet}_{\left( \delta_1\delta_2 ; \delta_1 , \delta_2 \right)}\left(A^{\bullet}\right)} \\[5pt]
        &=& \dim_\K H^{\bullet}_{\left( \delta_1 ; \delta_1 \right)}\left(A^{\bullet}\right) + \dim_\K H^{\bullet}_{\left( \delta_2 ; \delta_2 \right)}\left(A^{\bullet}\right)
        \end{eqnarray*}
        holds.
        \end{enumerate}
 \end{enumerate}
\end{cor}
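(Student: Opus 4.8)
The plan is to transport the statement to the canonical double complex $\Doub^{\bullet,\bullet}A^\bullet$ and to dissect the chain of inequalities that underlies Corollary~\ref{cor:frolicher-like-double-complexes}. Since $\GCD{\hat\delta_1}{\hat\delta_2}=1$ one has $A^{\GCD{\hat\delta_1}{\hat\delta_2}\,\bullet}=A^\bullet$, so Corollary~\ref{cor:charact-hodge-frolicher-double-1} already reduces condition (i) to a single numerical equality. Write $a$, $b$, $h_1$, $h_2$, $d$ for the dimensions, in each fixed total degree, of respectively the Bott-Chern cohomology $H_{\left(\delta_1\otimes_\K\id,\delta_2\otimes_\K\beta;\cdots\right)}\left(\Doub^{\bullet,\bullet}A^\bullet\right)$, the Aeppli cohomology, the $\left(\delta_1\otimes_\K\id\right)$-cohomology, the $\left(\delta_2\otimes_\K\beta\right)$-cohomology, and the total cohomology $H_{\left(\delta_1\otimes_\K\id+\delta_2\otimes_\K\beta;\cdots\right)}\left(\Tot^\bullet\Doub^{\bullet,\bullet}A^\bullet\right)$; by Lemma~\ref{lemma:cohom-a-doub} these coincide with the sums $\sum_{p+q=\bullet}$ of the corresponding cohomologies of $A^\bullet$ in degree $\hat\delta_1\,p+\hat\delta_2\,q$ appearing in the statement. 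In this notation Corollary~\ref{cor:charact-hodge-frolicher-double-1} reads: condition (i) holds if and only if $a+b=2\,d$ in every total degree.

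Next I would recover the two elementary inequalities whose combination gives Corollary~\ref{cor:frolicher-like-double-complexes}. Applying the classical Fr\"olicher inequality, Proposition~\ref{prop:frolicher-classico}, to $\left(\Doub^{\bullet,\bullet}A^\bullet,\,\delta_1\otimes_\K\id,\,\delta_2\otimes_\K\beta\right)$ gives $d\leq h_1$ and $d\leq h_2$, while applying Theorem~\ref{thm:disug-frol} (in the $\Z^2$-graded form of the Remark following it, summed over the total degree) gives $a+b\geq h_1+h_2$. Hence the chain
$$ a+b \;\geq\; h_1+h_2 \;\geq\; 2\,d $$
holds in every total degree, and the equality $a+b=2\,d$ forces both links to be equalities: $h_1=h_2=d$ together with $a+b=h_1+h_2$. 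The first of these is, by the very wording of the statement, the degeneration of the two spectral sequences at the first level, that is condition (ii)(a); the second is condition (ii)(b) read in total degree. Conversely $h_1=h_2=d$ and $a+b=h_1+h_2$ give back $a+b=2\,d$, so, granting the grading identification discussed below, the two are equivalent.

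The only delicate point is to match the total-degree equality $a+b=h_1+h_2$ with the $A$-degreewise equality actually written in (ii)(b). For this I would invoke Theorem~\ref{thm:disug-frol} applied to $A^\bullet$ itself, which yields, in each \emph{single} degree $m$ of $A^\bullet$, the inequality $\dim_\K H^m_{\left(\delta_1,\delta_2;\delta_1\delta_2\right)}(A)+\dim_\K H^m_{\left(\delta_1\delta_2;\delta_1,\delta_2\right)}(A)\geq\dim_\K H^m_{\left(\delta_1;\delta_1\right)}(A)+\dim_\K H^m_{\left(\delta_2;\delta_2\right)}(A)$. By Lemma~\ref{lemma:cohom-a-doub} the identity $a+b=h_1+h_2$ in a fixed total degree says precisely that the sum of these nonnegative defects over all pairs $(p,q)$ with $p+q$ fixed vanishes; as each summand is nonnegative, the sum vanishes if and only if the defect at every $m=\hat\delta_1\,p+\hat\delta_2\,q$ vanishes. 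Because $\GCD{\hat\delta_1}{\hat\delta_2}=1$, B\'ezout's identity guarantees that every integer $m$ arises as $\hat\delta_1\,p+\hat\delta_2\,q$ for some $(p,q)$, so $a+b=h_1+h_2$ in every total degree is equivalent to the vanishing of the defect in every degree $m$ of $A^\bullet$, which is exactly (ii)(b). Combining the three steps yields (i) $\iff a+b=2\,d \iff$ [(ii)(a) and (ii)(b)].

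I expect the main obstacle to be precisely this grading bookkeeping: the canonical double complex reindexes $A^\bullet$ through the many-to-one map $(p,q)\mapsto\hat\delta_1\,p+\hat\delta_2\,q$, so one must argue carefully that an equality obtained after summing over total degrees descends to an equality in each separate degree of $A^\bullet$, and it is here that the coprimality hypothesis $\GCD{\hat\delta_1}{\hat\delta_2}=1$ is indispensable. One should also track finiteness throughout: the hypotheses $\dim_\K H^\bullet_{\left(\delta_1;\delta_1\right)}(A)<+\infty$ and $\dim_\K H^\bullet_{\left(\delta_2;\delta_2\right)}(A)<+\infty$ together with the boundedness of $A^\bullet$ ensure that $a$, $b$, $h_1$, $h_2$, $d$ are finite in each degree, so that the subtraction of dimensions and the equality-in-a-chain arguments are legitimate.
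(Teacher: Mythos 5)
Your proposal is correct and takes essentially the same route as the paper, whose proof consists precisely of citing Corollary \ref{cor:charact-hodge-frolicher-double-1}, Proposition \ref{prop:frolicher-classico}, Theorem \ref{thm:disug-frol}, and Lemma \ref{lemma:cohom-a-doub}: your squeeze $a+b\geq h_1+h_2\geq 2\,d$ on the canonical double complex is exactly the intended way of combining these ingredients. The degreewise-descent argument — nonnegative defects from Theorem \ref{thm:disug-frol} summed along anti-diagonals, with $\GCD{\hat\delta_1}{\hat\delta_2}=1$ and B\'ezout guaranteeing every degree of $A^\bullet$ is reached — correctly fills in the bookkeeping the paper leaves implicit.
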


\begin{proof}
The Corollary follows from Corollary \ref{cor:charact-hodge-frolicher-double-1}, Proposition \ref{prop:frolicher-classico}, Theorem \ref{thm:disug-frol}, and Lemma \ref{lemma:cohom-a-doub}.
\end{proof}

\section{Applications}

In this section, we prove or recover applications of the inequality {\itshape à la} Fr\"olicher, Theorem \ref{thm:disug-frol} and
Theorem \ref{thm:caratt-deldelbar-lemma-double}, to the complex, symplectic, and generalized complex cases.

\subsection{Complex structures}
Let $X$ be a compact complex manifold.
Consider the $\Z^2$-graded $\C$-vector space $\wedge^{\bullet,\bullet}X$ of bi-graded complex differential forms endowed with the endomorphisms $\del\in\End^{1,0}\left(\wedge^{\bullet,\bullet}X\right)$ and $\delbar\in\End^{0,1}\left(\wedge^{\bullet,\bullet}X\right)$, which satisfy $\del^2=\delbar^2=\del\delbar+\delbar\del=0$. As usual, define the Dolbeault cohomologies as
$$ H^{\bullet,\bullet}_{\del}(X) \;:=\; H^{\bullet,\bullet}_{\left( \del ; \del \right)}\left(\wedge^{\bullet,\bullet}X\right) \;, \qquad H^{\bullet,\bullet}_{\delbar}(X) \;:=\; H^{\bullet,\bullet}_{\left( \delbar ; \delbar \right)}\left(\wedge^{\bullet,\bullet}X\right) \;, $$
and the \emph{Bott-Chern cohomology} and the \emph{Aeppli cohomology} as, respectively, \cite{bott-chern, aeppli},
$$ H^{\bullet,\bullet}_{BC}(X) \;:=\; H^{\bullet,\bullet}_{\left( \del , \delbar ; \del\delbar \right)}\left(\wedge^{\bullet,\bullet}X\right) \;, \qquad H^{\bullet,\bullet}_{A}(X) \;:=\; H^{\bullet,\bullet}_{\left( \del\delbar ; \del , \delbar \right)}\left(\wedge^{\bullet,\bullet}X\right) \;. $$

Note that, since $X$ is a compact manifold, $\dim_\C \Tot^\bullet H^{\bullet,\bullet}_{\delbar}(X)<+\infty$: indeed, for any Hermitian metric $g$ with $\C$-linear Hodge-$*$-operator $*_g\colon \wedge^{\bullet_1,\bullet_2}X\to \wedge^{\dim_\C X - \bullet_2, \dim_\C X - \bullet_1}$, one has an isomorphism $\ker \left[\delbar,\, \delbar^*\right] \stackrel{\simeq}{\to} H^\bullet_{\delbar}(X)$, where $\delbar^*$ is the adjoint operator of $\delbar$ with respect to the inner product induced on $\wedge^{\bullet,\bullet}X$ by $g$, and the $2^{\text{nd}}$-order self-adjoint differential operator $\left[\delbar,\, \delbar^*\right]$ is elliptic.
Furthermore, $\dim_\C \Tot^\bullet H^{\bullet,\bullet}_{\del}(X)=\dim_\C \Tot^\bullet H^{\bullet,\bullet}_{\delbar}(X)<+\infty$, since conjugation induces the ($\C$-anti-linear) isomorphism $H^{\bullet_1,\bullet_2}_{\del}(X) \simeq H^{\bullet_2,\bullet_1}_{\delbar}(X)$ of $\R$-vector spaces.

Note also that $\dim_\C \Tot^\bullet H^{\bullet,\bullet}_{BC}(X) = \dim_\C \Tot^{2\,\dim_\C X - \bullet} H^{\bullet,\bullet}_{A}(X) < +\infty$, \cite[Corollaire 2.3, \S2.c]{schweitzer}: indeed, for any Hermitian metric $g$ on $X$, the $\C$-linear Hodge-$*$-operator $*_g\colon \wedge^{\bullet_1,\bullet_2}X\to \wedge^{\dim_\C X - \bullet_2, \dim_\C X - \bullet_1}X$ induces the isomorphism $*_g\colon H^{\bullet_1,\bullet_2}_{BC}(X) \stackrel{\simeq}{\to} H^{\dim_\C X - \bullet_2, \dim_\C X - \bullet_1}_{A}(X)$, \cite[\S2.c]{schweitzer}, and $\ker\tilde\Delta_{BC} \stackrel{\simeq}{\to} H^{\bullet,\bullet}_{BC}(X)$, \cite[Théorème 2.2]{schweitzer}, where $\tilde\Delta_{BC}:=\left(\del\delbar\right)\left(\del\delbar\right)^*+\left(\del\delbar\right)^*\left(\del\delbar\right)+\left(\delbar^*\del\right)\left(\delbar^*\del\right)^*+\left(\delbar^*\del\right)^*\left(\delbar^*\del\right)+\delbar^*\delbar+\del^*\del$ is a $4^{\text{th}}$-order self-adjoint elliptic differential operator,
\cite[Proposition 5]{kodaira-spencer-3}, see also \cite[\S2.b]{schweitzer}.

By abuse of notation, one says that $X$ \emph{satisfies the $\del\delbar$-Lemma} if the double complex $\left(\wedge^{\bullet,\bullet}X,\, \del,\, \delbar\right)$ satisfies the $\del\delbar$-Lemma, and one says that $X$ \emph{satisfies the $\de\de^{c}$-Lemma} if the $\Z$-graded $\C$-vector space $\wedge^\bullet X \otimes_\R\C$ endowed with the endomorphisms $\de\in\End^1\left(\wedge^\bullet X\otimes_\R\C\right)$ and $\de^c:=-\im\left(\del-\delbar\right)\in\End^1\left(\wedge^\bullet X\otimes_\R\C\right)$ such that $\de^2=\left(\de^c\right)^2=\left[\de,\, \de^c\right]=0$ satisfies the $\de\de^c$-Lemma. Actually, it turns out that $X$ satisfies the $\de\de^{c}$-Lemma if and only if $X$ satisfies the $\del\delbar$-Lemma, \cite[Remark 5.14]{deligne-griffiths-morgan-sullivan}: indeed, note that $\del = \frac{1}{2} \, \left(\de+\im\de^c\right)$ and $\delbar = \frac{1}{2}\, \left(\de-\im\de^c\right)$, and $\del\delbar = -\frac{\im}{2}\, \de\de^c$.

From Corollary \ref{cor:frolicher-like-double-complexes} and Theorem \ref{thm:caratt-deldelbar-lemma-double}, one gets straightforwardly the following inequality {\itshape à la} Fr\"olicher for the Bott-Chern cohomology of a compact complex manifolds and the corresponding characterization of the $\del\delbar$-Lemma by means of the Bott-Chern cohomology, first proved by the authors in \cite{angella-tomassini-3}.

\begin{cor}[{\cite[Theorem A, Theorem B]{angella-tomassini-3}}]\label{cor:cplx}
Let $X$ be a compact complex manifold. The inequality
\begin{equation}\label{eq:cplx}
\dim_\C \Tot^\bullet H^{\bullet,\bullet}_{BC}(X) + \dim_\C \Tot^\bullet H^{\bullet,\bullet}_{A}(X) \;\geq\; 2\, \dim_\C H^\bullet_{dR}(X;\C)
\end{equation}
holds.
Furthermore, the equality in \eqref{eq:cplx} holds if and only if $X$ satisfies the $\del\delbar$-Lemma.
\end{cor}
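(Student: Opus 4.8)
The plan is to apply the general machinery already established, namely Corollary \ref{cor:frolicher-like-double-complexes} and Theorem \ref{thm:caratt-deldelbar-lemma-double}, directly to the double complex $\left(\wedge^{\bullet,\bullet}X,\, \del,\, \delbar\right)$, taking $\delta_1 := \del \in \End^{1,0}\left(\wedge^{\bullet,\bullet}X\right)$ and $\delta_2 := \delbar \in \End^{0,1}\left(\wedge^{\bullet,\bullet}X\right)$. These satisfy $\del^2 = \delbar^2 = \del\delbar + \delbar\del = 0$, and since $X$ is a compact, hence finite-dimensional, manifold, $\wedge^{\bullet,\bullet}X$ is a bounded $\Z^2$-graded $\C$-vector space. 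Under this dictionary the cohomologies match up by definition as $H^{\bullet,\bullet}_{\left( \del , \delbar ; \del\delbar \right)} = H^{\bullet,\bullet}_{BC}(X)$ and $H^{\bullet,\bullet}_{\left( \del\delbar ; \del , \delbar \right)} = H^{\bullet,\bullet}_{A}(X)$, while, because $\del + \delbar = \de$, the total cohomology $H^\bullet_{\left( \del+\delbar ; \del+\delbar \right)}\left(\Tot^\bullet \wedge^{\bullet,\bullet}X\right)$ is exactly $H^\bullet_{dR}(X;\C)$.

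The one point requiring verification is the finiteness hypothesis, and this is precisely the analytic input recorded in the discussion preceding the statement. By Hodge theory, for any Hermitian metric the second-order self-adjoint operator $\left[\delbar,\, \delbar^*\right]$ is elliptic, so $\dim_\C \Tot^\bullet H^{\bullet,\bullet}_{\delbar}(X) < +\infty$; conjugation then gives the $\R$-linear isomorphism $H^{\bullet_1,\bullet_2}_{\del}(X) \simeq H^{\bullet_2,\bullet_1}_{\delbar}(X)$, whence $\dim_\C \Tot^\bullet H^{\bullet,\bullet}_{\del}(X) = \dim_\C \Tot^\bullet H^{\bullet,\bullet}_{\delbar}(X) < +\infty$. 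Thus both finiteness assumptions demanded by Corollary \ref{cor:frolicher-like-double-complexes} and Theorem \ref{thm:caratt-deldelbar-lemma-double} are satisfied.

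With the hypotheses in place, inequality \eqref{eq:cplx} is immediate from Corollary \ref{cor:frolicher-like-double-complexes} upon choosing the sign $+$, so that the right-hand side specializes to $2\, \dim_\C H^\bullet_{dR}(X;\C)$. For the second assertion, Theorem \ref{thm:caratt-deldelbar-lemma-double} states that equality in \eqref{eq:disug-frol-double-complexes} (with the $+$ sign) holds if and only if $\left(\wedge^{\bullet,\bullet}X,\, \del,\, \delbar\right)$ satisfies the $\del\delbar$-Lemma, which by the convention fixed above is exactly the statement that $X$ satisfies the $\del\delbar$-Lemma. This yields the desired characterization of the equality in \eqref{eq:cplx}.

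Since every nontrivial ingredient has already been isolated in the general results, there is no serious obstacle: the work is confined to the bookkeeping of matching the abstract notation to the geometric one and to confirming that the boundedness and finiteness hypotheses hold. The only genuinely geometric step is the finite-dimensionality of the Dolbeault groups, which rests on the compactness of $X$ through elliptic regularity; everything else is a formal specialization of the algebraic framework.
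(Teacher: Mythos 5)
Your proposal is correct and coincides with the paper's own argument: the paper likewise deduces Corollary~\ref{cor:cplx} by applying Corollary~\ref{cor:frolicher-like-double-complexes} and Theorem~\ref{thm:caratt-deldelbar-lemma-double} to the bounded double complex $\left(\wedge^{\bullet,\bullet}X,\,\del,\,\delbar\right)$, with the finiteness hypotheses supplied exactly as you say, by ellipticity of $\left[\delbar,\,\delbar^*\right]$ and the conjugation isomorphism $H^{\bullet_1,\bullet_2}_{\del}(X)\simeq H^{\bullet_2,\bullet_1}_{\delbar}(X)$. Nothing is missing.
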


\subsection{Symplectic structures}
Let $X$ be a $2n$-dimensional compact manifold endowed with a \emph{symplectic structure} $\omega$, namely, a non-degenerate $\de$-closed $2$-form on $X$. The symplectic form $\omega$ induces a natural isomorphism $I \colon TX \stackrel{\simeq}{\to} T^*X$; more precisely, $I(\sspace)(\ssspace) := \omega(\sspace,\ssspace)$. Set $\Pi :=: \omega^{-1} := \omega\left(I^{-1}\sspace,I^{-1}\ssspace\right) \in \wedge^2 TX$ the \emph{canonical Poisson bi-vector} associated to $\omega$, namely, in a Darboux chart with local coordinates $\left\{x^1,\ldots,x^n,y^1,\ldots,y^n\right\}$ such that $\omega\stackrel{\text{loc}}{=}\sum_{j=1}^{n}\de x^j\wedge \de y^j$, one has $\omega^{-1}\stackrel{\text{loc}}{=}\sum_{j=1}^{n}\frac{\del}{\del x^j}\wedge\frac{\del}{\del y^j}$. One gets a bi-$\R$-linear form on $\wedge^k X$, denoted by $\left(\omega^{-1}\right)^k$, by defining it on the simple elements $\alpha^1\wedge\cdots \wedge \alpha^k \in \wedge^kX$ and $\beta^1\wedge\cdots \wedge \beta^k\in\wedge^kX$ as
$$
\left(\omega^{-1}\right)^k\left(\alpha^1\wedge\cdots \wedge \alpha^k,\, \beta^1\wedge\cdots \wedge \beta^k\right) \;:=\; \det\left(\omega^{-1}\left(\alpha^\ell,\, \beta^m\right)\right)_{\ell,m\in\{1,\ldots,k\}} \;;
$$
note that $\left(\omega^{-1}\right)^k$ is skew-symmetric, respectively symmetric, according to $k$ is odd, respectively even.

We recall that the operators
\begin{eqnarray*}
L \;\in\; \End^2\left(\wedge^\bullet X\right) \;, \quad && L(\alpha) \;:=\; \omega\wedge\alpha \;,\\[5pt]
\Lambda \;\in\; \End^{-2}\left(\wedge^\bullet X\right) \;, \quad && \Lambda(\alpha) \;:=\; -\iota_\Pi\alpha \;,\\[5pt]
H \;\in\; \End^0\left(\wedge^\bullet X\right) \;, \quad && H(\alpha) \;:=\; \sum_{k\in\Z} \left(n-k\right)\,\pi_{\wedge^kX}\alpha \;,
\end{eqnarray*}
yield an $\mathfrak{sl}(2;\R)$-representation on $\wedge^\bullet X$ (where $\iota_{\xi}\colon\wedge^{\bullet}X\to\wedge^{\bullet-2}X$ denotes the interior product with $\xi\in\wedge^2\left(TX\right)$, and $\pi_{\wedge^kX}\colon\wedge^\bullet X\to\wedge^kX$ denotes the natural projection onto $\wedge^kX$, for $k\in\Z$).

Define the \emph{symplectic co-differential operator} as
$$ \de^\Lambda \;:=\; \left[\de,\, \Lambda\right] \;\in\; \End^{-1}\left(\wedge^\bullet X \right) \;; $$
one has that $\left(\de^\Lambda\right)^2 = \left[\de,\, \de^\Lambda\right] = 0$, see \cite[page 266, page 265]{koszul}, \cite[Proposition 1.2.3, Theorem 1.3.1]{brylinski}.

\medskip

As a matter of notation, for $\sharp \in \left\{ \left( \de, \de^\Lambda ; \de\de^\Lambda \right) ,\, \left( \de ; \de \right) ,\, \left( \de^\Lambda ; \de^\Lambda \right) ,\, \left( \de\de^\Lambda ; \de , \de^\Lambda \right) \right\}$, we shorten $H^\bullet_{\sharp}(X):=H^\bullet_{\sharp}\left(\wedge^\bullet X\right)$. Note that $H^\bullet_{\left( \de ; \de \right)}(X)=H^\bullet_{dR}(X;\R)$. As regards notation introduced by L.-S. Tseng and S.-T. Yau in \cite[\S3]{tseng-yau-1}, note that $H^\bullet_{\left( \de^\Lambda ; \de^\Lambda\right)}(X) = H^\bullet_{\de^\Lambda}(X)$, and that $H^\bullet_{\left( \de , \de^\Lambda ; \de\de^\Lambda\right)}(X) = H^\bullet_{\de+\de^\Lambda}(X)$, and that $H^\bullet_{\left( \de\de^\Lambda ; \de , \de^\Lambda\right)}(X) = H^\bullet_{\de\de^\Lambda}(X)$.

Note also that, as a consequence of the Hodge theory developed by L.-S. Tseng and S.-T. Yau in \cite[Proposition 3.3, Theorem 3.5, Theorem 3.16]{tseng-yau-1}, one has that, \cite[Corollary 3.6, 
Corollary 3.17]{tseng-yau-1}, $X$ being compact, for $\sharp \in \left\{ \left( \de, \de^\Lambda ; \de\de^\Lambda \right) ,\, \left( \de ; \de \right) ,\, \left( \de^\Lambda ; \de^\Lambda \right) ,\, \left( \de\de^\Lambda ; \de , \de^\Lambda \right) \right\}$,
$$ \dim_\R H^\bullet_{\sharp}(X) \;<\; +\infty \;.$$

\medskip

With the aim to develop a symplectic counterpart of Riemannian Hodge theory for compact symplectic manifolds, J.-L. Brylinski defined the \emph{symplectic-$\star$-operator}, \cite[\S2]{brylinski},
$$
\star_\omega\colon \wedge^\bullet X\to \wedge^{2n-\bullet}X
$$
requiring that, for every $\alpha,\,\beta\in\wedge^k X$,
$$ \alpha\wedge\star_\omega \beta \;=\; \left(\omega^{-1}\right)^k\left(\alpha,\beta\right)\,\omega^n \;.$$

Since $\de^\Lambda\lfloor_{\wedge^kX}=(-1)^{k+1}\,\star_\omega\,\de\,\star_\omega$, \cite[Theorem 2.2.1]{brylinski}, and $\star_\omega^2=\id$, \cite[Lemma 2.1.2]{brylinski}, then one gets that $\star_\omega$ induces the isomorphism
$$ \star_\omega \colon H^\bullet_{\left( \de ; \de \right)}(X) \stackrel{\simeq}{\to} H^{2n-\bullet}_{\left( \de^\Lambda ; \de^\Lambda \right)}(X) \;.$$
In particular, by the Poincaré duality, it follows that
$$ \dim_\R H^\bullet_{\left( \de ; \de \right)}(X) \;=\; \dim_\R H^\bullet_{\left( \de^\Lambda ; \de^\Lambda \right)}(X) \;<\; +\infty \;.$$

Furthermore, by choosing an almost-complex structure $J$ compatible with $\omega$ (namely, such that $\omega(\sspace, \, J\sspace)$ is positive definite and $\omega(J\sspace,\, J\ssspace)=\omega$), and by considering the $J$-Hermitian metric $g:=\omega(\sspace,\, J\ssspace)$, one gets that, \cite[Corollary 3.25]{tseng-yau-1}, the Hodge-$*$-operator $*_g\colon \wedge^\bullet X \to \wedge^{2n-\bullet}X$ associated to $g$ induces the isomorphism, \cite[Corollary 2.2.2]{brylinski},
$$ *_g \colon H^\bullet_{\left( \de , \de^\Lambda ; \de\de^\Lambda \right)}(X) \stackrel{\simeq}{\to} H^{2n-\bullet}_{\left( \de\de^\Lambda ; \de , \de^\Lambda \right)}(X) \;.$$
In particular, it follows that
$$ \dim_\R H^\bullet_{\left( \de , \de^\Lambda ; \de\de^\Lambda \right)}(X) \;=\; \dim_\R H^{2n-\bullet}_{\left( \de\de^\Lambda ; \de , \de^\Lambda \right)}(X) \;<\; +\infty \;.$$

\medskip

Recall that one says that the \emph{Hard Lefschetz Condition} holds on $X$ if
\begin{equation}\label{eq:hlc}
\tag{HLC}
 \text{for every } k\in\N\;, \qquad L^k\colon H^{n-k}_{dR}(X;\R) \stackrel{\simeq}{\to} H^{n+k}_{dR}(X;\R) \;.
\end{equation}

\medskip

As in \cite{angella-tomassini-4}, and miming \cite{li-zhang} in the almost-complex case, define, for $r,s\in\N$,
$$ H^{(r,s)}_\omega(X;\R) \;:=\; \left\{\left[L^r\,\gamma^{(s)}\right]\in H^{2r+s}_{dR}(X;\R) \st \Lambda \gamma^{(s)} = 0 \right\} \;\subseteq\; H^{2r+s}_{dR}(X;\R) \;; $$
one has that
$$ \sum_{2r+s=\bullet} H^{(r,s)}_\omega(X;\R) \;\subseteq\; H^{\bullet}_{dR}(X;\R) \;,$$
but in general neither the sum is direct, nor the inclusion is an equality.

As proved by Y. Lin in \cite[Proposition A.5]{lin}, if the Hard Lefschetz Condition holds on $X$, then
$$ H^{(0,\bullet)}_\omega(X;\R) \;=\; \mathrm{P}H^\bullet_{\de}(X;\R) \;, $$
where
$$ \mathrm{P}H^\bullet_{\de}(X;\R) \;:=\; \frac{\ker\de\cap\ker\de^\Lambda\cap\ker\Lambda}{\imm\de\lfloor_{\ker\de^\Lambda\cap\ker\Lambda}}  $$
is the \emph{primitive cohomology} introduced by L.-S. Tseng and S.-T. Yau in \cite[\S4.1]{tseng-yau-1}.

\medskip

We recall the following result.

\begin{thm}[{\cite[Corollary 2]{mathieu}, \cite[Theorem 0.1]{yan}, \cite[Proposition 1.4]{merkulov}, \cite{guillemin}, \cite[Proposition 3.13]{tseng-yau-1}, \cite[Theorem 5.4]{cavalcanti}, \cite[Remark 2.3]{angella-tomassini-4}}]
 Let $X$ be a compact manifold endowed with a symplectic structure $\omega$. The following conditions are equivalent:
 \begin{enumerate}
  \item every de Rham cohomology class of $X$ admits a representative being both $\de$-closed and $\de^\Lambda$-closed, namely, Brylinski's conjecture \cite[Conjecture 2.2.7]{brylinski} holds on $X$;
  \item the Hard Lefschetz Condition holds on $X$;
  \item the natural map $H_{\left(\de , \de^\Lambda ; \de\de^\Lambda\right)}^\bullet\left(\wedge^\bullet X\right) \to H_{dR}^{\bullet}(X;\R)$ induced by the identity is surjective;
  \item the natural map $H_{\left(\de , \de^\Lambda ; \de\de^\Lambda\right)}^\bullet\left(\wedge^\bullet X\right) \to H_{dR}^{\bullet}(X;\R)$ induced by the identity is an isomorphism;
  \item the bounded $\Z$-graded $\R$-vector space $\wedge^\bullet X$ endowed with the endomorphisms $\de\in\End^1\left(\wedge^\bullet X\right)$ and $\de^\Lambda \in \End^{-1}\left(\wedge^\bullet X \right)$ satisfies the $\de\de^\Lambda$-Lemma;
  \item the decomposition
    $$ H^\bullet_{dR}(X;\R) \;=\; \bigoplus_{r\in\N} L^r\, H^{(0,\bullet-2r)}_\omega(X;\R) \;, $$
    holds.
 \end{enumerate}
\end{thm}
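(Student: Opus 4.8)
The plan is to deduce all the equivalences by feeding the symplectic structure into the abstract machinery of Lemma \ref{lemma:equiv}, and then to close the resulting cycle with the symplectic Hodge theory of \cite{brylinski, tseng-yau-1} and the Mathieu--Yan theorem \cite{mathieu, yan}. I would apply the $\Z$-graded theory with $\delta_1 = \de \in \End^1\left(\wedge^\bullet X\right)$ and $\delta_2 = \de^\Lambda \in \End^{-1}\left(\wedge^\bullet X\right)$: here $\hat\delta_1 = 1 \neq -1 = \hat\delta_2$, all the cohomologies are finite-dimensional since $X$ is compact, and, crucially, $H^\bullet_{\left( \de ; \de \right)}(X) = H^\bullet_{dR}(X;\R)$, while $\star_\omega$ gives $H^\bullet_{\left( \de^\Lambda ; \de^\Lambda \right)}(X) \simeq H^{2n-\bullet}_{dR}(X;\R)$, so that both ``Dolbeault'' pieces reduce to de Rham cohomology.

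First I would dispatch the formal links. Since $\imm\de\de^\Lambda \subseteq \imm\de$, the identity induces a well-defined map $\iota\colon H^\bullet_{\left( \de , \de^\Lambda ; \de\de^\Lambda \right)}(X) \to H^\bullet_{dR}(X;\R)$, whose image is exactly the set of de Rham classes admitting a representative in $\ker\de\cap\ker\de^\Lambda$; thus Brylinski's conjecture and the surjectivity of $\iota$ are the very same statement, giving the equivalence of the first and third conditions, and the fourth condition trivially implies the third. Conversely, if $\wedge^\bullet X$ satisfies the $\de\de^\Lambda$-Lemma, then $\iota$ is injective by Lemma \ref{lemma:equiv}, and it is also surjective: given $\alpha \in \ker\de$, one has $\de^\Lambda\alpha \in \ker\de\cap\ker\de^\Lambda\cap\imm\de^\Lambda \subseteq \imm\de\de^\Lambda$, so writing $\de^\Lambda\alpha = \de\de^\Lambda\gamma$ the form $\alpha + \de\gamma$ is a representative of $[\alpha]_{dR}$ that is both $\de$-closed and $\de^\Lambda$-closed. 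Hence the fifth condition implies the fourth, and the chain fifth $\Rightarrow$ fourth $\Rightarrow$ third $\Leftrightarrow$ first is established.

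It remains to close the cycle, and here I would route through the Hard Lefschetz Condition. The equivalence of Brylinski's conjecture with the Hard Lefschetz Condition is precisely the theorem of Mathieu and Yan \cite[Corollary 2]{mathieu}, \cite[Theorem 0.1]{yan}, which I invoke. The implication from the Hard Lefschetz Condition to the $\de\de^\Lambda$-Lemma is supplied by the symplectic Hodge theory of \cite[\S3]{tseng-yau-1} (equivalently \cite[Proposition 1.4]{merkulov}, \cite{guillemin}, \cite[Theorem 5.4]{cavalcanti}): under the Hard Lefschetz Condition the $\mathfrak{sl}(2;\R)$-representation on $\wedge^\bullet X$ yields symplectic-harmonic representatives and a Lefschetz decomposition at the level of forms, from which one reads off $\ker\de\cap\ker\de^\Lambda\cap\left(\imm\de+\imm\de^\Lambda\right) = \imm\de\de^\Lambda$, that is, the $\de\de^\Lambda$-Lemma; this closes the cycle. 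Finally, the equivalence with the sixth condition is standard $\mathfrak{sl}(2;\R)$-representation theory: the Hard Lefschetz isomorphisms are exactly what turn $H^\bullet_{dR}(X;\R)$ into an $\mathfrak{sl}(2;\R)$-module, whose primitive decomposition is $\bigoplus_{r\in\N} L^r\, H^{(0,\bullet-2r)}_\omega(X;\R)$ once the primitive part is identified with $H^{(0,\bullet)}_\omega(X;\R)$ through \cite[Proposition A.5]{lin}, while the converse is immediate.

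The main obstacle is the passage from the Hard Lefschetz Condition to the $\de\de^\Lambda$-Lemma, which is the one step genuinely outside the formal framework of Lemma \ref{lemma:equiv}. Indeed, the surjectivity of $\iota$ onto $H^\bullet_{dR}(X;\R)$ is not among the abstract criteria of that lemma; under the dualities $\star_\omega\colon H^\bullet_{\left( \de ; \de \right)}(X) \simeq H^{2n-\bullet}_{\left( \de^\Lambda ; \de^\Lambda \right)}(X)$ and $*_g\colon H^\bullet_{\left( \de , \de^\Lambda ; \de\de^\Lambda \right)}(X) \simeq H^{2n-\bullet}_{\left( \de\de^\Lambda ; \de , \de^\Lambda \right)}(X)$ it transposes only to the \emph{injectivity}, rather than the surjectivity, of the natural map $H^\bullet_{\left( \de^\Lambda ; \de^\Lambda \right)}(X) \to H^\bullet_{\left( \de\de^\Lambda ; \de , \de^\Lambda \right)}(X)$, which does not by itself force the Lemma. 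This is precisely why the genuine analytic input of symplectic Hodge theory is indispensable, and it is the step I would expect to demand the most care.
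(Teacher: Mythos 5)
Your proposal is correct, but note first that the paper itself offers no proof of this statement: it is explicitly a recollection from the literature, with the equivalences attributed to Mathieu, Yan, Merkulov, Guillemin, Tseng--Yau, Cavalcanti, and the authors' earlier work, so there is no internal argument to compare against. Your assembly of the cycle is sound and uses the paper's formal machinery exactly where it applies: conditions (i) and (iii) coincide by definition of the map induced by the identity; (iv) trivially gives (iii); for (v) $\Rightarrow$ (iv) you correctly get injectivity from Lemma \ref{lemma:equiv} (the implication (i) $\Rightarrow$ (iii) there, which needs no auxiliary graduation hypothesis), and your surjectivity computation is right, including the sign: since $\de\de^\Lambda=-\de^\Lambda\de$, one has $\de^\Lambda\left(\alpha+\de\gamma\right)=\de\de^\Lambda\gamma+\de^\Lambda\de\gamma=0$. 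The two genuinely non-formal inputs are correctly isolated and attributed: (i) $\Leftrightarrow$ (ii) is Mathieu--Yan, and (ii) $\Rightarrow$ (v) is the symplectic Hodge-theoretic step. Your closing observation --- that the dualities $\star_\omega$ and $*_g$ transpose the desired surjectivity only into injectivity of $H^\bullet_{\left(\de^\Lambda;\de^\Lambda\right)}(X)\to H^\bullet_{\left(\de\de^\Lambda;\de,\de^\Lambda\right)}(X)$, which is not among the criteria of Lemma \ref{lemma:equiv} --- is accurate and correctly explains why the analytic input cannot be avoided by formal means.

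The one point you should expand is the equivalence with (vi), which you dispatch as ``standard $\mathfrak{sl}(2;\R)$-representation theory''. The direction (ii) $\Rightarrow$ (vi) is not pure $\mathfrak{sl}(2;\R)$ formalism on cohomology: $H^{(r,s)}_\omega(X;\R)$ consists of classes represented by $L^r$ of \emph{pointwise} primitive forms, so one needs harmonic representatives, i.e., precisely the results of Yan and Lin that you cite. Conversely, (vi) $\Rightarrow$ (ii) deserves its one line rather than ``immediate'': pointwise primitive $s$-forms vanish for $s>n$, hence $H^{(0,s)}_\omega(X;\R)=\{0\}$ for $s>n$, so the decomposition yields $H^{n+k}_{dR}(X;\R)=\bigoplus_{r\geq k} L^r\, H^{(0,n+k-2r)}_\omega(X;\R)\subseteq \imm L^k$, and surjectivity of $L^k\colon H^{n-k}_{dR}(X;\R)\to H^{n+k}_{dR}(X;\R)$ upgrades to bijectivity by Poincaré duality. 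With these two glosses your proposal is a complete and correctly sourced proof.
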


\medskip

In order to apply Corollary \ref{cor:charact-hodge-frolicher-double} to the $\Z$-graded $\R$-vector space $\wedge^\bullet X$ endowed with the endomorphisms $\de \in \End^1\left(\wedge^\bullet X\right)$ and $\de^\Lambda \in \End^{-1}\left(\wedge^\bullet X \right)$, satisfying $\de^2 = \left(\de^\Lambda\right)^2 = \left[\de,\, \de^\Lambda\right] = 0$, we need the following result.

\begin{lemma}[{\cite[Theorem 2.3.1]{brylinski}, \cite[Theorem 2.5]{fernandez-ibanez-deleon-IsrJMath}; see also \cite[Theorem 2.9]{fernandez-ibanez-deleon-IsrJMath}, \cite[Theorem 5.2]{cavalcanti-jgp}}]
 Let $X$ be a compact manifold endowed with a symplectic structure $\omega$. Consider the $\Z^2$-graded $\R$-vector space $\wedge^{\bullet}X$ endowed with the endomorphisms $\de \in \End^1\left(\wedge^\bullet X\right)$ and $\de^\Lambda \in \End^{-1}\left(\wedge^\bullet X\right)$. Both the spectral sequences associated to the canonical double complex $\left(\Doub^{\bullet,\bullet}\wedge^{\bullet}X ,\, \de\otimes_\R\id ,\, \de^\Lambda\otimes_\R\beta\right)$ degenerate at the first level.
\end{lemma}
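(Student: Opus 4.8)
The plan is to compute the cohomology of the total complex of the canonical double complex directly, by conjugating the total differential to each of its two ``edge'' differentials by means of the Lefschetz $\mathfrak{sl}(2;\R)$-action, and then to read off the degeneration from a dimension count.

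First I would record the set-up. With $\delta_1=\de$ and $\delta_2=\de^\Lambda$, so that $\hat\delta_1=1$ and $\hat\delta_2=-1$, the canonical double complex is
$$ \Doub^{\bullet_1,\bullet_2}\wedge^\bullet X \;=\; \wedge^{\bullet_1-\bullet_2}X\otimes_\R\R\,\beta^{\bullet_2}\;, $$
with the two differentials $\de\otimes_\R\id$ and $\de^\Lambda\otimes_\R\beta$, and total differential $D:=\de\otimes_\R\id+\de^\Lambda\otimes_\R\beta$ on $\Tot^\bullet\Doub^{\bullet,\bullet}\wedge^\bullet X$ (which is finite-dimensional in each total degree, $X$ being compact, so both spectral sequences converge). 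By Lemma~\ref{lemma:cohom-a-doub}, the first pages of the two associated spectral sequences are
$$ {}''E_1^{\bullet,\bullet}\;\simeq\;\Doub^{\bullet,\bullet}H^\bullet_{\left(\de;\de\right)}(X) \qquad\text{and}\qquad {}'E_1^{\bullet,\bullet}\;\simeq\;\Doub^{\bullet,\bullet}H^\bullet_{\left(\de^\Lambda;\de^\Lambda\right)}(X)\;, $$
both abutting to $\ker D/\imm D$. Since $E_\infty$ is a subquotient of $E_1$ and is the associated graded of the abutment, in each total degree one has $\dim_\R E_\infty\le\dim_\R E_1$ and $\dim_\R\ker D/\imm D=\dim_\R E_\infty$; hence degeneration at the first level is equivalent to the equality $\dim_\R E_1=\dim_\R\left(\ker D/\imm D\right)$ in each total degree, and it suffices to compute $\dim_\R\ker D/\imm D$.

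The key step uses that, $\omega$ being $\de$-closed, the differentials $\de$ and $\de^\Lambda=\left[\de,\,\Lambda\right]$ together with the triple $(L,\Lambda,H)$ of the $\mathfrak{sl}(2;\R)$-representation recalled above satisfy the symplectic analogues of the Kähler identities
$$ \left[\de,\,L\right]=0\;,\qquad \left[\Lambda,\,\de\right]=-\de^\Lambda\;,\qquad \left[\Lambda,\,\de^\Lambda\right]=0\;,\qquad \left[L,\,\de^\Lambda\right]=-\de\;; $$
equivalently, $\de$ and $\de^\Lambda$ span a two-dimensional irreducible $\mathfrak{sl}(2;\R)$-submodule of $\End^\bullet\left(\wedge^\bullet X\right)$. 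I would then consider the total-degree-preserving, invertible automorphisms $e^{\Lambda\otimes_\R\beta}$ and $e^{L\otimes_\R\beta^{-1}}$ of $\Tot^\bullet\Doub^{\bullet,\bullet}\wedge^\bullet X$ (the exponents are nilpotent, since $\Lambda^{n+1}=L^{n+1}=0$). By the identities above the Baker--Campbell--Hausdorff series truncates after the first commutator, the relevant double commutators $\left[\Lambda,\,\de^\Lambda\right]$ and $\left[L,\,\de\right]$ being zero, and one finds
$$ e^{\Lambda\otimes_\R\beta}\,D\,e^{-\Lambda\otimes_\R\beta}\;=\;D+\left[\Lambda,\,\de\right]\otimes_\R\beta\;=\;\de\otimes_\R\id\;, $$
$$ e^{L\otimes_\R\beta^{-1}}\,D\,e^{-L\otimes_\R\beta^{-1}}\;=\;D+\left[L,\,\de^\Lambda\right]\otimes_\R\id\;=\;\de^\Lambda\otimes_\R\beta\;. $$

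These two conjugations exhibit $\left(\Tot^\bullet\Doub^{\bullet,\bullet}\wedge^\bullet X,\,D\right)$ as isomorphic, as a complex, both to $\left(\Tot^\bullet\Doub^{\bullet,\bullet}\wedge^\bullet X,\,\de\otimes_\R\id\right)$ and to $\left(\Tot^\bullet\Doub^{\bullet,\bullet}\wedge^\bullet X,\,\de^\Lambda\otimes_\R\beta\right)$; being total-degree-preserving, they yield, in each total degree,
$$ \dim_\R\ker D/\imm D\;=\;\dim_\R\Tot^\bullet\,{}''E_1\;=\;\dim_\R\Tot^\bullet\,{}'E_1\;, $$
which by the second paragraph gives the degeneration at the first level of both spectral sequences. (Alternatively, a single conjugation, combined with the isomorphism $\star_\omega\colon H^\bullet_{\left(\de;\de\right)}(X)\stackrel{\simeq}{\to}H^{2n-\bullet}_{\left(\de^\Lambda;\de^\Lambda\right)}(X)$ recorded above, which together with Poincaré duality forces $\dim_\R{}'E_1=\dim_\R{}''E_1$, already suffices.) I expect the only real subtlety to be the careful verification of the four commutation relations and of the Koszul signs carried by the tensor factors; once these are settled, the truncation of the exponential series and the ensuing dimension count are immediate. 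A more classical alternative would replace the conjugation argument by the symplectic Hodge theory of Brylinski and Tseng--Yau, but the conjugation by $e^{\Lambda\otimes_\R\beta}$ and $e^{L\otimes_\R\beta^{-1}}$ seems the most economical.
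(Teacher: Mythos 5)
Your argument is correct, but it is genuinely different from the route the paper takes: the paper offers no proof of this lemma at all, citing instead Brylinski and Fern\'andez--Ib\'a\~nez--de Le\'on, whose strategy (sketched in the paper's subsequent Remark on Poisson manifolds) is to prove degeneration of the \emph{first} spectral sequence ${'E}_r^{\bullet,\bullet}$ for arbitrary compact Poisson manifolds, and then, in the symplectic case, to transfer it to the second via the symplectic-$\star$-operator isomorphism $\star_\omega\colon {'E}^{\bullet_1,\bullet_2}_r \stackrel{\simeq}{\to} {''E}^{\bullet_2,2n+\bullet_1}_r$. Your conjugation of the total differential $D$ by $e^{\Lambda\otimes_\R\beta}$ and $e^{L\otimes_\R\beta^{-1}}$ is instead a self-contained, purely symplectic argument in the spirit of Cavalcanti's intertwining $\varphi=e^{\im\omega}e^{\Lambda/2\im}$ (which the paper uses later, in the generalized-complex example, to identify $GH^\bullet_{\delbar_{\mathcal{J}_\omega}}(X)$ with de Rham cohomology), and it checks out: with the paper's conventions $\Lambda=-\iota_{\omega^{-1}}$ and $\de^\Lambda=\left[\de,\Lambda\right]$, the relation $\left[\Lambda,\de\right]=-\de^\Lambda$ is the definition, while $\left[\Lambda,\de^\Lambda\right]=0$ and $\left[L,\de^\Lambda\right]=-\de$ are the standard identities of Yan's $\mathfrak{sl}(2;\R)$-module (the only input from $\de\omega=0$ being $\left[\de,L\right]=0$); since $\Lambda$ and $L$ are of even degree there are no Koszul signs to worry about, the operators $\Lambda\otimes_\R\beta$ and $L\otimes_\R\beta^{-1}$ have bidegrees $(-1,1)$ and $(1,-1)$ and so preserve total degree, and the adjoint series truncates exactly as you say. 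What each approach buys: the cited route yields the stronger Poisson-general statement for ${'E}_r$, whereas yours avoids both Poisson generality and symplectic Hodge theory, working directly at the level of the complex; your closing observation that a single conjugation plus $\star_\omega$-duality and Poincar\'e duality already suffices is also correct, since these force $\dim_\R {'E}_1^{p,q}=\dim_\R {''E}_1^{p,q}$.

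One small misstatement to repair: $\Tot^k\Doub^{\bullet,\bullet}\wedge^\bullet X$ is \emph{not} finite-dimensional (it is a finite direct sum of infinite-dimensional spaces of forms), so compactness of $X$ is not the reason the spectral sequences converge. Convergence holds because in each total degree $k$ only finitely many bidegrees are nonzero (one needs $0\leq \bullet_1-\bullet_2\leq 2n$), so the two filtrations are bounded; compactness enters only to guarantee the finite-dimensionality of the $E_1$-terms, $\dim_\R H^\bullet_{\left(\de;\de\right)}(X)<+\infty$ and $\dim_\R H^\bullet_{\left(\de^\Lambda;\de^\Lambda\right)}(X)<+\infty$, without which your concluding dimension count (equality of $\dim E_1$ and $\dim E_\infty$ forcing all $d_r$, $r\geq 1$, to vanish) would not go through.
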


Hence, by applying Theorem \ref{thm:disug-frol} and Corollary \ref{cor:charact-hodge-frolicher-double} to the $\Z$-graded $\R$-vector space $\wedge^\bullet X$ endowed with $\de\in\End^{1}\left(\wedge^\bullet X\right)$ and $\de^\Lambda\in\End^{-1}\left(\wedge^\bullet X\right)$, we get the following result.

\begin{thm}\label{thm:sympl}
Let $X$ be a compact manifold endowed with a symplectic structure $\omega$.
The inequality
\begin{equation}\label{eq:sympl}
\dim_\R H^{\bullet}_{\left( \de , \de^\Lambda ; \de\de^\Lambda \right)}\left(X\right) + \dim_\R H^{\bullet}_{\left( \de\de^\Lambda ; \de , \de^\Lambda \right)}\left(X\right) \;\geq\; 2\, \dim_\R H^{\bullet}_{dR}(X;\R)
\end{equation}
holds. Furthermore, the equality in \eqref{eq:sympl} holds if and only if $X$ satisfies the Hard Lefschetz Condition.
\end{thm}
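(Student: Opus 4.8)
The plan is to specialize the general machinery of Section~3 to the bounded $\Z$-graded $\R$-vector space $\wedge^\bullet X$ equipped with $\delta_1 := \de \in \End^1\left(\wedge^\bullet X\right)$ and $\delta_2 := \de^\Lambda \in \End^{-1}\left(\wedge^\bullet X\right)$, which satisfy $\de^2 = \left(\de^\Lambda\right)^2 = \de\de^\Lambda + \de^\Lambda\de = 0$. Since $X$ is compact, $\wedge^\bullet X$ is bounded and, as recorded above, all four cohomologies $H^\bullet_{\left( \de ; \de \right)}(X)$, $H^\bullet_{\left( \de^\Lambda ; \de^\Lambda \right)}(X)$, $H^\bullet_{\left( \de , \de^\Lambda ; \de\de^\Lambda \right)}(X)$, $H^\bullet_{\left( \de\de^\Lambda ; \de , \de^\Lambda \right)}(X)$ are finite-dimensional; in particular the finiteness hypotheses of Theorem~\ref{thm:disug-frol} and Corollary~\ref{cor:charact-hodge-frolicher-double} are met. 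Note also that $\GCD{\hat\delta_1}{\hat\delta_2} = \GCD{1}{-1} = 1$, so that Corollary~\ref{cor:charact-hodge-frolicher-double} applies with $A^{\GCD{\hat\delta_1}{\hat\delta_2}\,\bullet} = \wedge^\bullet X$.

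For the inequality \eqref{eq:sympl}, I would invoke Theorem~\ref{thm:disug-frol} directly with $(\delta_1,\delta_2) = (\de,\de^\Lambda)$, obtaining, degree by degree,
\[
\dim_\R H^\bullet_{\left( \de , \de^\Lambda ; \de\de^\Lambda \right)}(X) + \dim_\R H^\bullet_{\left( \de\de^\Lambda ; \de , \de^\Lambda \right)}(X) \;\geq\; \dim_\R H^\bullet_{\left( \de ; \de \right)}(X) + \dim_\R H^\bullet_{\left( \de^\Lambda ; \de^\Lambda \right)}(X) \;.
\]
It then remains only to rewrite the right-hand side as $2\,\dim_\R H^\bullet_{dR}(X;\R)$. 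This is exactly the content of two facts recorded before the theorem: $H^\bullet_{\left( \de ; \de \right)}(X) = H^\bullet_{dR}(X;\R)$, while the symplectic-$\star$ isomorphism $\star_\omega\colon H^\bullet_{\left( \de ; \de \right)}(X) \stackrel{\simeq}{\to} H^{2n-\bullet}_{\left( \de^\Lambda ; \de^\Lambda \right)}(X)$ combined with Poincaré duality yields $\dim_\R H^\bullet_{\left( \de^\Lambda ; \de^\Lambda \right)}(X) = \dim_\R H^{2n-\bullet}_{dR}(X;\R) = \dim_\R H^\bullet_{dR}(X;\R)$. Substituting gives \eqref{eq:sympl}.

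For the characterization of equality, I would apply Corollary~\ref{cor:charact-hodge-frolicher-double}. Its hypothesis $\GCD{\hat\delta_1}{\hat\delta_2}=1$ holds, and its condition~(a)---degeneration at the first level of both the Hodge and the Fr\"olicher spectral sequences of the canonical double complex $\left(\Doub^{\bullet,\bullet}\wedge^{\bullet}X ,\, \de\otimes_\R\id ,\, \de^\Lambda\otimes_\R\beta\right)$---is automatically satisfied here, by the degeneration lemma of J.-L. Brylinski and Fern\'andez--Ib\'a\~nez--de Le\'on recalled immediately above. Hence the corollary collapses to the single equivalence: $\wedge^\bullet X$ satisfies the $\de\de^\Lambda$-Lemma if and only if its condition~(b) holds, that is, if and only if
\[
\dim_\R H^\bullet_{\left( \de , \de^\Lambda ; \de\de^\Lambda \right)}(X) + \dim_\R H^\bullet_{\left( \de\de^\Lambda ; \de , \de^\Lambda \right)}(X) = \dim_\R H^\bullet_{\left( \de ; \de \right)}(X) + \dim_\R H^\bullet_{\left( \de^\Lambda ; \de^\Lambda \right)}(X) \;,
\]
which, by the grading identity used above, is precisely the equality in \eqref{eq:sympl}. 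Finally, I would close the loop by quoting the equivalences theorem recalled earlier in this subsection, according to which the $\de\de^\Lambda$-Lemma for $\wedge^\bullet X$ is equivalent to the Hard Lefschetz Condition on $X$; chaining the two equivalences gives that equality in \eqref{eq:sympl} holds if and only if the Hard Lefschetz Condition holds.

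The two applications of the general results and the passage through the equivalences theorem are immediate once the dictionary is set up. The only point requiring care, and the main (if modest) obstacle, is the grading bookkeeping needed to identify $\dim_\R H^\bullet_{\left( \de^\Lambda ; \de^\Lambda \right)}(X)$ with $\dim_\R H^\bullet_{dR}(X;\R)$: one must track the degree shift $\bullet \mapsto 2n-\bullet$ introduced by $\star_\omega$ and correct it via Poincaré duality, and then verify that the degree-wise right-hand side of Theorem~\ref{thm:disug-frol} equals $2\,\dim_\R H^\bullet_{dR}(X;\R)$ in every degree, so that equality of totals is equivalent to equality in each degree and the characterization transfers cleanly to \eqref{eq:sympl}.
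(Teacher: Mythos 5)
Your proposal is correct and takes essentially the same route as the paper, which likewise obtains \eqref{eq:sympl} by applying Theorem \ref{thm:disug-frol} to $\left(\wedge^\bullet X,\, \de,\, \de^\Lambda\right)$ and derives the equality characterization from Corollary \ref{cor:charact-hodge-frolicher-double}, whose degeneration hypothesis is discharged by the Brylinski and Fern\'andez--Ib\'a\~nez--de Le\'on lemma, and whose $\de\de^\Lambda$-Lemma conclusion is translated into the Hard Lefschetz Condition via the recalled equivalences theorem. Your explicit bookkeeping with $\star_\omega$ and Poincar\'e duality to identify $\dim_\R H^\bullet_{\left( \de^\Lambda ; \de^\Lambda \right)}(X)$ with $\dim_\R H^\bullet_{dR}(X;\R)$ degree by degree is exactly the dictionary the paper sets up before the theorem.
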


\medskip

Consider $X = \left. \Gamma \middle\backslash G \right.$ a solvmanifold endowed with a $G$-left-invariant symplectic structure $\omega$; in particular, $\omega$ induces a linear symplectic structure on $\mathfrak{g}$; therefore the endomorphisms $\de \in \End^1\left(\wedge^\bullet X\right)$ and $\de^\Lambda \in \End^{-1}\left(\wedge^\bullet X \right)$ yield endomorphisms $\de \in \End^1\left(\wedge^\bullet \duale{\g}\right)$ and $\de^\Lambda \in \End^{-1}\left(\wedge^\bullet \duale{\g} \right)$ on the $\Z$-graded $\R$-vector sub-space $\wedge^\bullet\duale{\g} \hookrightarrow \wedge^\bullet X$, where we identify objects on $\mathfrak{g}$ with $G$-left-invariant objects on $X$ by means of left-translations. For $\sharp \in \left\{ \left( \de, \de^\Lambda ; \de\de^\Lambda \right) ,\, \left( \de ; \de \right) ,\, \left( \de^\Lambda ; \de^\Lambda \right) ,\, \left( \de\de^\Lambda ; \de , \de^\Lambda \right) \right\}$, one has the natural map $\iota\colon H^\bullet_{\sharp}\left(\wedge^\bullet\mathfrak{g}^*\right)
 \to H^\bullet_{\sharp}\left(X\right)$.
We recall the following result, which allows to compute the cohomologies of a completely-solvable solvmanifold by using just left-invariant forms; recall, e.g., that, by A. Hattori's theorem \cite[Corollary 4.2]{hattori}, if $G$ is \emph{completely-solvable} (that is, for any $g\in G$, all the eigenvalues of $\mathrm{Ad}_g:=\de\left(\psi_g\right)_e\in\mathrm{Aut}(\g)$ are real, equivalently, if, for any $X\in\g$, all the eigenvalues of $\mathrm{ad}_X:=\left[X,\sspace\right]\in\End(\g)$ are real, where $\psi\colon G \ni g \mapsto \left( \psi_g \colon h \mapsto g\,h\,g^{-1} \right) \in \mathrm{Aut}(G)$ and $e$ is the identity element of $G$), then the natural map $H^\bullet_{dR}\left(\wedge^\bullet\duale{\g}\right) \to H^\bullet_{dR}\left(X;\R\right)$ is an isomorphism.

\begin{thm}[{\cite[Theorem 3, Remark 4]{macri}, see also \cite{angella-kasuya}}]
 Let $X = \left. \Gamma \middle\backslash G \right.$ be a completely-solvable solvmanifold endowed with a $G$-left-invariant symplectic structure $\omega$. Then, for $\sharp \in \left\{ \left( \de, \de^\Lambda ; \de\de^\Lambda \right) ,\, \left( \de ; \de \right) ,\, \left( \de^\Lambda ; \de^\Lambda \right) ,\, \left( \de\de^\Lambda ; \de , \de^\Lambda \right) \right\}$, the natural map
 $$ \iota\colon H^\bullet_{\sharp}\left(\wedge^\bullet\mathfrak{g}^*\right) \to H^\bullet_{\sharp}\left(X\right) $$
 is an isomorphism.
\end{thm}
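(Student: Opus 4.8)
The plan is to prove the statement by the classical symmetrization technique, reducing all four cohomologies to the already-known de Rham case. Since $X=\left.\Gamma\middle\backslash G\right.$ is compact and carries a lattice, $G$ is unimodular, so $X$ admits a $G$-left-invariant volume form of total mass $1$; averaging forms against it defines a symmetrization operator $\mu\colon \wedge^\bullet X \to \wedge^\bullet\duale{\g}$ projecting onto the invariant forms. As $\de$ commutes with pullbacks by left-translations and with integration, $\mu$ commutes with $\de$; and as the Poisson bi-vector $\Pi=\omega^{-1}$ is $G$-left-invariant, so are $\Lambda=-\iota_\Pi$ and hence $\de^\Lambda=\left[\de,\,\Lambda\right]$, which therefore also commute with $\mu$. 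Thus both $\mu$ and the inclusion $\iota$ are morphisms of the bi-differential structure $\left(\wedge^\bullet X,\,\de,\,\de^\Lambda\right)$ with $\mu\circ\iota=\id$, so $\mu$ is idempotent and $\wedge^\bullet X=\wedge^\bullet\duale{\g}\oplus C$ splits as a direct sum of sub-bi-complexes, where $C:=\ker\mu$. Every cohomology then splits as $H^\bullet_{\sharp}\left(\wedge^\bullet X\right)=H^\bullet_{\sharp}\left(\wedge^\bullet\duale{\g}\right)\oplus H^\bullet_{\sharp}\left(C\right)$, with $\iota$ inducing the inclusion of the first summand; hence it suffices to prove $H^\bullet_{\sharp}(C)=\{0\}$ for each of the four $\sharp$.

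Next I would dispatch the two ``Dolbeault-type'' cases. For $\sharp=\left(\de ; \de\right)$, Hattori's theorem (recalled above) asserts that $\iota\colon H^\bullet_{dR}\left(\wedge^\bullet\duale{\g}\right)\to H^\bullet_{dR}(X;\R)$ is an isomorphism, so the complementary summand vanishes, i.e. $H^\bullet_{\left( \de ; \de \right)}(C)=\{0\}$. For $\sharp=\left(\de^\Lambda ; \de^\Lambda\right)$, I would use the symplectic-$\star$ operator $\star_\omega$: being built pointwise from the invariant form $\omega$, it is $G$-left-invariant, commutes with $\mu$ and preserves $C$; since $\star_\omega^2=\id$ and $\de^\Lambda=\pm\,\star_\omega\,\de\,\star_\omega$, conjugation by $\star_\omega$ identifies the $\de^\Lambda$-complex of $C$ with its $\de$-complex up to the regrading $\bullet\mapsto 2n-\bullet$, whence $H^\bullet_{\left( \de^\Lambda ; \de^\Lambda \right)}(C)\cong H^{2n-\bullet}_{\left( \de ; \de \right)}(C)=\{0\}$.

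The crux, and what I expect to be the main obstacle, is upgrading these two ``quasi-isomorphisms'' to the Bott-Chern and Aeppli cases: the inequality {\itshape à la} Fr\"olicher of Theorem \ref{thm:disug-frol} applied to $C$ only yields $\dim_\R H^\bullet_{\left( \de , \de^\Lambda ; \de\de^\Lambda \right)}(C)+\dim_\R H^\bullet_{\left( \de\de^\Lambda ; \de , \de^\Lambda \right)}(C)\geq 0$, so it cannot by itself force vanishing. Instead I would run a bounded staircase (zig-zag) argument on $C$, which is $\Z$-graded and bounded ($C^k=\{0\}$ for $k\notin\{0,\ldots,2n\}$). Given $w\in\left(\ker\de\cap\ker\de^\Lambda\right)\cap C^k$, using $H^\bullet_{\left( \de^\Lambda ; \de^\Lambda \right)}(C)=\{0\}$ write $w=\de^\Lambda b_0$; since $\de\de^\Lambda=-\de^\Lambda\de$, one produces inductively $b_i\in C^{k+1+2i}$ with $\de b_i=\de^\Lambda b_{i+1}$, and boundedness forces $b_i=0$ for $i$ large. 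Folding the staircase back down, at each rung $\de b_{i-1}=\de^\Lambda b_i=-\de\de^\Lambda c_i$ puts $b_{i-1}+\de^\Lambda c_i\in\ker\de=\imm\de$ (using $H^\bullet_{\left( \de ; \de \right)}(C)=\{0\}$), giving $b_{i-1}=\de c_{i-1}-\de^\Lambda c_i$; at the bottom $b_0=\de c_0-\de^\Lambda c_1$, so $w=\de^\Lambda b_0=-\de\de^\Lambda c_0\in\imm\de\de^\Lambda$, i.e. $H^\bullet_{\left( \de , \de^\Lambda ; \de\de^\Lambda \right)}(C)=\{0\}$. The Aeppli case follows dually, or at once from the Bott-Chern case via the Tseng-Yau duality $\ast_g\colon H^\bullet_{\left( \de , \de^\Lambda ; \de\de^\Lambda \right)}\to H^{2n-\bullet}_{\left( \de\de^\Lambda ; \de , \de^\Lambda \right)}$ for a $G$-invariant compatible metric $g=\omega(\sspace,\,J\ssspace)$ (with $J$ an invariant $\omega$-compatible almost-complex structure), which preserves $C$ and exchanges the two groups. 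Combining with the splitting completes the proof. Equivalently, one may pass to the canonical double complex $\Doub^{\bullet,\bullet}(C)$, which by Lemma \ref{lemma:cohom-a-doub} has exact rows and columns and is supported in a bounded diagonal strip, and run the same zig-zag there.
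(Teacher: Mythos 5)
You should first note a mismatch of expectations: the paper does not prove this theorem at all — it is recalled from \cite[Theorem 3, Remark 4]{macri} (see also \cite{angella-kasuya}) and used as a black box to compute the example of the Iwasawa manifold — so there is no internal proof to compare against, and your argument has to be judged on its own merits. On those merits it is correct, and it is a genuinely self-contained route. The symmetrization step is sound: the existence of the lattice $\Gamma$ forces $G$ to be unimodular, the averaging map $\mu\colon\wedge^\bullet X\to\wedge^\bullet\duale{\g}$ commutes with $\de$ by the standard Belgun-type lemma, and it commutes with $\Lambda=-\iota_{\omega^{-1}}$ (hence with $\de^\Lambda=\left[\de,\Lambda\right]$) and with $\star_\omega$ because these operators have constant coefficients in a left-invariant co-frame while $\mu$ only averages the coefficient functions; thus $\wedge^\bullet X=\wedge^\bullet\duale{\g}\oplus C$ splits as bi-differential complexes and all four cohomologies split accordingly. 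Complete solvability enters exactly where it must (Hattori, giving $H^\bullet_{\left(\de;\de\right)}(C)=\{0\}$), the $\star_\omega$-conjugation correctly transports this to $H^\bullet_{\left(\de^\Lambda;\de^\Lambda\right)}(C)=\{0\}$, and your staircase is airtight: I checked both the termination (once $k+1+2i>2n$ one has $C^{k+1+2i}=\{0\}$, and exactness of the $\de^\Lambda$-complex forces $\de b_{i-1}=0$ there, so the ladder really stops) and the descent $b_{i-1}=\de c_{i-1}-\de^\Lambda c_i$ down to $w=-\de\de^\Lambda c_0$. Your self-diagnosis is also right that Theorem \ref{thm:disug-frol} applied to $C$ gives nothing, so the zig-zag — equivalently, exactness of rows and columns of the bounded-strip double complex $\Doub^{\bullet,\bullet}(C)$ via Lemma \ref{lemma:cohom-a-doub} — is the correct replacement.

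The one step I would flag is the second route you offer for the Aeppli case: invoking the Tseng--Yau duality $*_g\colon H^\bullet_{\left(\de,\de^\Lambda;\de\de^\Lambda\right)}\to H^{2n-\bullet}_{\left(\de\de^\Lambda;\de,\de^\Lambda\right)}$ \emph{on the abstract summand $C$} is not immediate, because that isomorphism is established via elliptic Hodge theory on the compact manifold, not by a cochain-level conjugation; to use it you would have to check that the relevant fourth-order Laplacians (built from the invariant $g$) commute with $\mu$, so that the harmonic decompositions themselves split along $\wedge^\bullet X=\wedge^\bullet\duale{\g}\oplus C$. This is fixable but not free. Fortunately your first alternative — the dual staircase (given $w\in\ker\de\de^\Lambda\cap C^k$, produce $v_i\in C^{k-2i}$ with $\de^\Lambda v_{i-1}=\de v_i$, terminate by boundedness below, and fold back to get $w\in\imm\de+\imm\de^\Lambda$) — works verbatim, so the proof closes without any elliptic input beyond Hattori's theorem. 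Compared with the cited source, whose argument runs through Tseng--Yau's harmonic theory on the solvmanifold, your purely algebraic vanishing argument on the complement $C$ is arguably more elementary and makes transparent exactly which hypotheses (lattice, complete solvability, invariance of $\omega$, boundedness of degree) are used where.
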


\begin{ex}
Let $\mathbb{I}_3:=\left. \Z\left[\im\right]^3 \middle\backslash \left(\C^3,\,*\right) \right.$ be the {\em Iwasawa manifold}, where the group structure $*$ on $\C^3$ is defined by
$$ \left(z_1,\,z_2,\,z_3\right) * \left(w_1,\,w_2,\,w_3\right) := \left(z_1+w_1 ,\, z_2+w_2 ,\, z_3+z_1w_2+w_3\right) \;.$$
There exists a $\left(\C^3,\,*\right)$-left-invariant co-frame $\left\{e^j\right\}_{j\in\{1,\ldots,6\}}$ of $T^*X$ such that
$$
\de e^1 \;=\; \de e^2 \;=\; \de e^3 \;=\; \de e^4 \;=\; 0 \;, \qquad \de e^5 \;=\; -e^{13}+e^{24} \;, \qquad \de e^6 \;=\; -e^{14}-e^{23}
$$
(in order to simplify notation, we shorten, e.g., $e^{12} := e^1\wedge e^2$).

Consider the $\left(\C^3,\,*\right)$-left-invariant almost-K\"ahler structure $\left(J,\, \omega,\, g\right)$ on $\mathbb{I}_3$ defined by
$$ Je^1 \;:=\; -e^6\;,\quad  Je^2 \;:=\; -e^5\;,\quad Je^3 \;:=\; -e^4 \;, \qquad \omega \;:=\; e^{16}+e^{25}+e^{34}\;, \qquad g \;:=\; \omega\left(\sspace,\, J\ssspace\right) \;; $$
it has been studied in \cite[\S4]{angella-tomassini-zhang} as an example of an almost-K\"ahler structure non-inducing a decomposition in cohomology according to the almost-complex structure, \cite[Proposition 4.1]{angella-tomassini-zhang}.

The symplectic cohomologies of the Iwasawa manifold $\mathbb{I}_3$ endowed with the $\left(\C^3,\,*\right)$-left-invariant symplectic structure $\omega$ can be computed using just $\left(\C^3,\,*\right)$-left-invariant forms, and their real dimensions are summarized in Table \ref{table:iwasawa-alm-kahler}.

\begin{center}
\begin{table}[h]
 \centering
\begin{tabular}{>{$\mathbf\bgroup}c<{\mathbf\egroup$} || >{$}c<{$} | >{$}c<{$} | >{$}c<{$} | >{$}c<{$}}
\toprule
\dim_\C H_{\sharp}^{\bullet}\left(\mathbb{I}_3\right) & \left( \de ; \de \right) & \left( \de^\Lambda ; \de^\Lambda \right) & \left( \de , \de^\Lambda ; \de\de^\Lambda \right) & \left( \de\de^\Lambda ; \de , \de^\Lambda \right) \\
\toprule
0 & 1 &  1 & 1 & 1 \\
\midrule[0.02em]
1 & 4 & 4 & 4 & 4 \\
\midrule[0.02em]
2 & 8 & 8 & 9 & 10 \\
\midrule[0.02em]
3 & 10 & 10 & 11 & 11 \\
\midrule[0.02em]
4 & 8 & 8 & 10 & 9 \\
\midrule[0.02em]
5 & 4 & 4 & 4 & 4 \\
\midrule[0.02em]
6 & 1 & 1 & 1 & 1 \\
\bottomrule
\end{tabular}
\caption{The symplectic cohomologies of the Iwasawa manifold $\mathbb{I}_3:=\left. \Z\left[\im\right]^3 \middle\backslash \left(\C^3,\,*\right) \right.$ endowed with the symplectic structure $\omega := e^{1} \wedge e^{6} + e^{2} \wedge e^{5} + e^{3} \wedge e^{4}$.}
\label{table:iwasawa-alm-kahler}
\end{table}
\end{center}

In particular, note that
\begin{eqnarray*}
\dim_\K H^{1}_{\left( \de , \de^\Lambda ; \de\de^\Lambda \right)}\left(X\right) + \dim_\K H^{1}_{\left( \de\de^\Lambda ; \de , \de^\Lambda \right)}\left(X\right) - 2\, \dim_\K H^{1}_{dR}(X;\R) &=& 0  \;, \\[5pt]
\dim_\K H^{2}_{\left( \de , \de^\Lambda ; \de\de^\Lambda \right)}\left(X\right) + \dim_\K H^{2}_{\left( \de\de^\Lambda ; \de , \de^\Lambda \right)}\left(X\right) - 2\, \dim_\K H^{2}_{dR}(X;\R) &=& 3  \;, \\[5pt]
\dim_\K H^{3}_{\left( \de , \de^\Lambda ; \de\de^\Lambda \right)}\left(X\right) + \dim_\K H^{3}_{\left( \de\de^\Lambda ; \de , \de^\Lambda \right)}\left(X\right) - 2\, \dim_\K H^{3}_{dR}(X;\R) &=& 2  \;.
\end{eqnarray*}
\end{ex}

\begin{rem}
 More in general, let $X$ be a compact manifold endowed with a Poisson bracket $\left\{\sspace, \ssspace\right\}$, and denote by $G$ the Poisson tensor associated to $\left\{\sspace, \ssspace\right\}$.
 By following J.-L. Koszul, \cite{koszul}, one defines $\delta:=\left[\iota_G,\, \de\right]\in\End^{-1}\left(\wedge^\bullet X\right)$. One has that $\delta^2=0$ and $\left[\de,\delta\right]=0$, \cite[page 266, page 265]{koszul}, see also \cite[Proposition 1.2.3, Theorem 1.3.1]{brylinski}.
 
 One has that, on any compact Poisson manifold, the first spectral sequence ${'E}_r^{\bullet,\bullet}$ associated to the canonical double complex $\left(\Doub^{\bullet,\bullet}\wedge^\bullet X,\, \de\otimes_\R\id,\, \delta\otimes_\R\beta\right)$ degenerates at the first level, \cite[Theorem 2.5]{fernandez-ibanez-deleon-IsrJMath}.

 On the other hand, M. Fernández, R. Ibáñez, and M. de León provided an example of a compact Poisson manifold (more precisely, of a nilmanifold endowed with a co-symplectic structure) such that the second spectral sequence ${''E}_r^{\bullet,\bullet}\left(\Doub^{\bullet,\bullet}\wedge^\bullet X,\, \de\otimes_\R\id,\, \delta\otimes_\R\beta\right)$ does not degenerate at the first level, \cite[Theorem 5.1]{fernandez-ibanez-deleon-IsrJMath}.

 In fact, on a compact $2n$-dimensional manifold $X$ endowed with a symplectic structure $\omega$, the symplectic-$\star$-operator $\star_\omega\colon \wedge^\bullet X \to \wedge^{2n-\bullet}X$ induces the isomorphism $\star_\omega\colon {'E}^{\bullet_1,\bullet_2}_r \stackrel{\simeq}{\to} {''E}^{\bullet_2,2n+\bullet_1}_r$, \cite[Theorem 2.9]{fernandez-ibanez-deleon-IsrJMath}; it follows that, on a compact symplectic manifold, also the second spectral sequence ${''E}_r^{\bullet,\bullet}\left(\Doub^{\bullet,\bullet}\wedge^\bullet X,\, \de\otimes_\R\id,\, \delta\otimes_\R\beta\right)$ actually degenerates at the first level, \cite[Theorem 2.3.1]{brylinski}, see also \cite[Theorem 2.8]{fernandez-ibanez-deleon-IsrJMath}.
\end{rem}

\subsection{Generalized complex structures}\label{subsec:gen-cplx}

Let $X$ be a compact differentiable manifold of dimension $2n$.
Consider the bundle $TX\oplus T^*X$ endowed with the natural symmetric pairing
$$ \scalar{\sspace}{\ssspace} \colon \left(TX\oplus T^*X\right) \times \left(TX\oplus T^*X\right) \to \R \;, \qquad \scalar{X+\xi}{Y+\eta}\;:=\; \frac{1}{2}\,\left(\xi(Y)+\eta(X)\right) \;. $$
Fix a $\de$-closed $3$-form $H$ on $X$.
On the space $\mathcal{C}^{\infty}\left(X; \, TX\oplus T^*X\right)$ of smooth sections of $TX\oplus T^*X$ over $X$, define the \emph{$H$-twisted Courant bracket} as
\begin{eqnarray*}
&& \left[\sspace ,\, \ssspace\right]_H \colon \mathcal{C}^{\infty}\left(X; \, TX\oplus T^*X\right) \times \mathcal{C}^{\infty}\left(X; \, TX\oplus T^*X\right) \to \mathcal{C}^{\infty}\left(X; \, TX\oplus T^*X\right) \;, \\[5pt]
&& \left[X+\xi,\, Y+\eta\right]_H \;:=\; \left[X,\, Y\right] + \mathcal{L}_X\eta - \mathcal{L}_Y\xi - \frac{1}{2}\, \de \left(\iota_X\eta-\iota_Y\xi\right) + \iota_Y\iota_X H
\end{eqnarray*}
(where $\iota_{X}\in \End^{-1}\left(\wedge^\bullet X\right)$ denotes the interior product with $X\in \mathcal{C}^\infty(X;TX)$ and $\mathcal{L}_X:=\left[\iota_X,\, \de\right]\in \End^0\left(\wedge^\bullet X\right)$ denotes the Lie derivative along $X\in \mathcal{C}^\infty(X;TX)$); the $H$-twisted Courant bracket can be seen also as a derived bracket induced by the $H$-twisted differential $\de_H:=\de+H\wedge\sspace$,
see \cite[\S3.2]{gualtieri-phdthesis}, \cite[\S2]{gualtieri}.

Furthermore, consider the \emph{Clifford action} of $TX\oplus T^*X$ on the space of differential forms with respect to $\scalar{\sspace}{\ssspace}$,
$$ \Cliff\left(TX\oplus T^*X\right) \times \wedge^\bullet X \to \wedge^{\bullet-1} X \oplus \wedge^{\bullet+1} X \;, \qquad (X+\xi) \cdot \varphi \;:=\; \iota_X\varphi + \xi\wedge\varphi \;, $$
and its bi-$\C$-linear extension $\Cliff\left(\left(TX\oplus T^*X\right)\otimes_\R\C\right) \times \left(\wedge^\bullet X\otimes_\R\C\right) \to \left(\wedge^{\bullet-1} X\otimes_\R\C\right) \oplus \left(\wedge^{\bullet+1} X\otimes_\R\C\right)$, where $\Cliff\left(TX\oplus T^*X\right) := \left.\left(\bigoplus_{k\in\Z}\bigotimes_{j=1}^{k}\left(TX\oplus T^*X\right)\right)\middle\slash\left\{v\otimes_\R v - \scalar{v}{v} \st v\in TX\oplus T^*X\right\} \right.$ is the Clifford algebra associated to $TX\oplus T^*X$ and $\scalar{\sspace}{\ssspace}$.

\medskip

Recall that an \emph{$H$-twisted generalized complex structure} on $X$, \cite[Definition 4.14, Definition 4.18]{gualtieri-phdthesis}, \cite[Definition 3.1]{gualtieri} is an endomorphism $\mathcal{J}\in\End\left(TX\oplus T^*X\right)$ such that
\begin{inparaenum}[\itshape (i)]
 \item $\mathcal{J}^2 = -\id_{TX\oplus T^*X}$, and
 \item $\mathcal{J}$ is orthogonal with respect to $\scalar{\sspace}{\ssspace}$, and
 \item the Nijenhuis tensor
       $$ \mathrm{Nij}_{\mathcal{J},H} \;:=\; -\left[ \mathcal{J}\,\sspace ,\, \mathcal{J}\,\ssspace\right]_H + \mathcal{J} \left[ \mathcal{J}\,\sspace ,\, \ssspace \right]_H + \mathcal{J} \left[ \sspace ,\, \mathcal{J}\,\ssspace \right]_H + \mathcal{J} \left[ \sspace ,\, \ssspace \right]_H \;\in\; \left(TX\oplus T^*X\right) \otimes_\R \left(TX\oplus T^*X\right) \otimes_\R \left(TX\oplus T^*X\right)^* $$
       of $\mathcal{J}$ with respect to the $H$-twisted Courant bracket vanishes identically.
\end{inparaenum}

Equivalently, \cite[Proposition 4.3]{gualtieri-phdthesis}, (by setting $L:=:L_{\mathcal{J}}$ the $\im$-eigen-bundle of the $\C$-linear extension of $\mathcal{J}$ to $\left(TX \oplus T^*X\right) \otimes_\R \C$), a generalized complex structure on $X$ is identified by a sub-bundle $L$ of $\left(TX \oplus T^*X\right) \otimes_\R \C$ such that
\begin{inparaenum}[\itshape (i)]
 \item $L$ is maximal isotropic with respect to $\scalar{\sspace}{\ssspace}$, and
 \item $L$ is involutive with respect to the $H$-twisted Courant bracket, and
 \item $L \cap \bar{L} = \{0\}$.
\end{inparaenum}

Equivalently, \cite[Theorem 4.8]{gualtieri-phdthesis}, (by choosing a complex form $\rho$ whose Clifford annihilator
$$ L_\rho \;:=\; \left\{v\in \left(TX \oplus T^*X\right) \otimes_\R \C \st v\cdot \rho=0\right\} $$
is the $\im$-eigen-bundle $L_\mathcal{J}$ of the $\C$-linear extension of $\mathcal{J}$ to $\left(TX \oplus T^*X\right) \otimes_\R \C$), a generalized complex structure on $X$ is identified by a sub-bundle $U:=:U_{\mathcal{J}}$ (which is called the \emph{canonical bundle}, \cite[\S4.1]{gualtieri-phdthesis}, \cite[Definition 3.7]{gualtieri}) of complex rank $1$ of $\wedge^\bullet X \otimes_\R \C$ being locally generated by a form $\rho = \exp{\left( B+\im\omega \right)}\wedge\Omega$, where $B\in\wedge^2X$, and $\omega\in\wedge^2X$, and $\Omega=\theta^1\wedge\cdots\wedge\theta^k\in\wedge^kX\otimes_\R\C$ with $\left\{\theta^1,\ldots,\theta^k\right\}$ a set of linearly independent complex $1$-forms, such that
\begin{inparaenum}[\itshape (i)]
 \item $\Omega \wedge \bar\Omega \wedge \omega^{n-k} \neq 0$, and
 \item there exists $v\in\left(TX \oplus T^*X\right) \otimes_\R \C$ such that $\de_H\rho = v \cdot \rho$, where $\de_H:=\de+H\wedge\sspace$.
\end{inparaenum}

By definition, the \emph{type} of a generalized complex structure $\mathcal{J}$ on $X$, \cite[\S4.3]{gualtieri-phdthesis}, \cite[Definition 3.5]{gualtieri}, is the upper-semi-continuous function
$$ \mathrm{type}\left(\mathcal{J}\right) \;:=\; \frac{1}{2}\, \dim_\R \left( T^*X \cap \mathcal{J}T^*X \right) $$
on $X$, equivalently, \cite[Definition 1.1]{gualtieri}, the degree of the form $\Omega$.

\medskip

A generalized complex structure $\mathcal{J}$ on $X$ induces a $\Z$-graduation on the space of complex differential forms on $X$, \cite[\S4.4]{gualtieri-phdthesis}, \cite[Proposition 3.8]{gualtieri}. Namely, define, for $k\in\Z$,
$$ U^k \;:=:\; U^k_{\mathcal{J}} \;:=\; \wedge^{n-k}\bar L_{\mathcal{J}} \cdot U_{\mathcal{J}} \;\subseteq\; \wedge^\bullet X \otimes_\R \C \;, $$
where $L_{\mathcal{J}}$ is the $\im$-eigenspace of the $\C$-linear extension of $\mathcal{J}$ to $\left(TX \oplus T^*X\right)\otimes_\R \C$ and $U^n_{\mathcal{J}}:=U_{\mathcal{J}}$ is the canonical bundle of $\mathcal{J}$.

\medskip

For a $\scalar{\sspace}{\ssspace}$-orthogonal endomorphism $\mathcal{J}\in\End\left(TX\oplus T^*X\right)$ satisfying $\mathcal{J}^2 = -\id_{TX\oplus T^*X}$,
the $\Z$-graduation $U^\bullet_{\mathcal{J}}$ still makes sense,
and the condition that $\mathrm{Nij}_{\mathcal{J},H}=0$ turns out to be equivalent, \cite[Theorem 4.3]{gualtieri-phdthesis}, \cite[Theorem 3.14]{gualtieri}, to
$$ \de_H\colon U^\bullet_{\mathcal{J}} \to U^{\bullet+1}_{\mathcal{J}} \oplus U^{\bullet-1}_{\mathcal{J}} \;. $$

Therefore, on a compact differentiable manifold endowed with a generalized complex structure $\mathcal{J}$, one has, \cite[\S4.4]{gualtieri-phdthesis}, \cite[\S3]{gualtieri},
$$ \de_H \;=\; \del_{\mathcal{J},H} + \delbar_{\mathcal{J},H} \qquad \text{ where } \qquad \del_{\mathcal{J},H} \colon U^\bullet_{\mathcal{J}} \to U^{\bullet+1}_{\mathcal{J}} \quad \text{ and } \quad \delbar_{\mathcal{J},H} \colon U^\bullet_{\mathcal{J}} \to U^{\bullet-1}_{\mathcal{J}} \;.$$
Define also, \cite[page 52]{gualtieri-phdthesis}, \cite[Remark at page 97]{gualtieri},
$$ \de_H^{\mathcal{J}} \;:=\; -\im\left(\del_{\mathcal{J},H} - \delbar_{\mathcal{J},H}\right) \colon U^\bullet_{\mathcal{J}} \to U^{\bullet+1}_{\mathcal{J}} \oplus U^{\bullet-1}_{\mathcal{J}} \;.$$

\medskip

By abuse of notation, one says that $X$ \emph{satisfies the $\del_{\mathcal{J},H}\delbar_{\mathcal{J},H}$-Lemma} if $\left(U^\bullet,\, \del_{\mathcal{J},H},\, \delbar_{\mathcal{J},H}\right)$ satisfies the $\del_{\mathcal{J},H}\delbar_{\mathcal{J},H}$-Lemma, and one says that $X$ \emph{satisfies the $\de_H\de_H^{\mathcal{J}}$-Lemma} if $\left(U^\bullet,\, \de_H,\, \de_H^{\mathcal{J}}\right)$ satisfies the $\de_H\de_H^{\mathcal{J}}$-Lemma. Actually, it turns out that $X$ satisfies the $\de_H\de_H^{\mathcal{J}}$-Lemma if and only if $X$ satisfies the $\del_{\mathcal{J},H}\delbar_{\mathcal{J},H}$-Lemma, \cite[Remark at page 129]{cavalcanti-jgp}: indeed, note that $\ker\del_{\mathcal{J},H}\delbar_{\mathcal{J},H} = \ker \de_H\de_H^{\mathcal{J}}$, and $\ker\del_{\mathcal{J},H} \cap \ker\delbar_{\mathcal{J},H} = \ker\de_H \cap \ker\de_H^{\mathcal{J}}$, and $\imm\del_{\mathcal{J},H} + \imm\delbar_{\mathcal{J},H} = \imm\de_H + \imm\de_H^{\mathcal{J}}$.

Moreover, the following result by G.~R. Cavalcanti holds.

\begin{thm}[{\cite[Theorem 4.2]{cavalcanti}, \cite[Theorem 4.1, Corollary 2]{cavalcanti-jgp}}]
 A manifold $X$ endowed with an $H$-twisted generalized complex structure $\mathcal{J}$ satisfies the $\de_H\de_H^{\mathcal{J}}$-Lemma if and only if $\left(\ker\de_H^{\mathcal{J}},\, \de\right)\hookrightarrow\left(U^\bullet,\, \de_H\right)$ is a quasi-isomorphism of differential $\Z$-graded $\C$-vector spaces. In this case, it follows that the splitting $\wedge^\bullet X \otimes_\R \C = \bigoplus_{k\in\Z}U^k$ gives rise to a decomposition in cohomology.
\end{thm}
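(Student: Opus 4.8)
The plan is to adapt the Deligne--Griffiths--Morgan--Sullivan ``principle of two types'', \cite[\S5]{deligne-griffiths-morgan-sullivan}, to the present setting, using only the formal relations between $\de_H$ and $\de_H^{\mathcal{J}}$. Write $\del:=\del_{\mathcal{J},H}$ and $\delbar:=\delbar_{\mathcal{J},H}$, so that $\de_H=\del+\delbar$ and $\de_H^{\mathcal{J}}=-\im\left(\del-\delbar\right)$; a direct computation gives $\de_H\de_H^{\mathcal{J}}=2\,\im\,\del\delbar$, whence $\imm\de_H\de_H^{\mathcal{J}}=\imm\del\delbar$, and moreover $\de_H$ and $\de_H^{\mathcal{J}}$ anti-commute while $\ker\de_H\cap\ker\de_H^{\mathcal{J}}=\ker\del\cap\ker\delbar$. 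Since the anti-commutation makes $\de_H$ preserve $\ker\de_H^{\mathcal{J}}$, the inclusion $\iota$ is a well-defined morphism of complexes, and the cohomology of $\left(\ker\de_H^{\mathcal{J}},\,\de_H\right)$ in each degree is $\ker\del\cap\ker\delbar$ modulo $\de_H\left(\ker\de_H^{\mathcal{J}}\right)$. I also recall, as noted before the statement, that the $\de_H\de_H^{\mathcal{J}}$-Lemma and the $\del\delbar$-Lemma are equivalent.

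For the direction ``$\del\delbar$-Lemma $\Rightarrow$ $\iota$ a quasi-isomorphism'' I would feed two reformulations of the Lemma into the obvious diagram chase. Applying Lemma \ref{lemma:equiv} with $\delta_1=\del$, $\delta_2=\delbar$ (its additional $\Z$-grading hypothesis holds here, as $\del$ and $\delbar$ have different degrees) and rewriting through $\imm\de_H=\imm\left(\del+\delbar\right)$ and $\imm\de_H^{\mathcal{J}}=\imm\left(\del-\delbar\right)$, the $\del\delbar$-Lemma is equivalent to each of the conditions $\ker\de_H\cap\imm\de_H^{\mathcal{J}}=\imm\del\delbar$ (item \eqref{item:equiv-7}) and $\ker\de_H^{\mathcal{J}}\cap\imm\de_H=\imm\del\delbar$ (item \eqref{item:equiv-5}). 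Surjectivity of $\iota^*$ then follows: if $\de_H a=0$, then $\de_H^{\mathcal{J}}a\in\ker\de_H\cap\imm\de_H^{\mathcal{J}}=\imm\del\delbar$, say $\de_H^{\mathcal{J}}a=\de_H\de_H^{\mathcal{J}}\beta$, so that $a+\de_H\beta$ is a $\de_H$-cohomologous representative lying in $\ker\de_H^{\mathcal{J}}$. Symmetrically, injectivity follows: if $a\in\ker\del\cap\ker\delbar$ is $\de_H$-exact, then $a\in\ker\de_H^{\mathcal{J}}\cap\imm\de_H=\imm\del\delbar$, say $a=\de_H\de_H^{\mathcal{J}}e$, so $a=\de_H\left(\de_H^{\mathcal{J}}e\right)$ with $\de_H^{\mathcal{J}}e\in\ker\de_H^{\mathcal{J}}$, i.e.\ $a$ is already a boundary in the subcomplex.

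For the converse I would extract and recombine the two set-theoretic consequences of $\iota^*$ being bijective. Injectivity gives $\ker\de_H\cap\ker\de_H^{\mathcal{J}}\cap\imm\de_H=\de_H\left(\ker\de_H^{\mathcal{J}}\right)$, and surjectivity gives $\left\{\de_H^{\mathcal{J}}a\st\de_H a=0\right\}=\imm\del\delbar$. Now take $x=\de_H^{\mathcal{J}}b\in\ker\de_H\cap\imm\de_H^{\mathcal{J}}$; anti-commutation forces $\de_H b\in\ker\de_H^{\mathcal{J}}$, and of course $\de_H b\in\ker\de_H\cap\imm\de_H$, so the injectivity consequence yields $\de_H b=\de_H c$ with $c\in\ker\de_H^{\mathcal{J}}$. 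Then $b-c\in\ker\de_H$ and $x=\de_H^{\mathcal{J}}\left(b-c\right)$, which by the surjectivity consequence lies in $\imm\del\delbar$. This is exactly item \eqref{item:equiv-7} of Lemma \ref{lemma:equiv}, so the $\del\delbar$-Lemma holds. I expect this recombination to be the main obstacle: surjectivity only controls $\de_H^{\mathcal{J}}$ applied to $\de_H$-closed elements, and the point is to use injectivity first to replace $b$ by the genuine $\de_H$-cocycle $b-c$ before invoking it.

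Finally, for the asserted decomposition I would observe that, with the $\del\delbar$-Lemma now available, Lemma \ref{lemma:equiv} (together with the dimension argument of Theorem \ref{thm:caratt-deldelbar-lemma-double}) renders all the identity-induced maps in the fundamental diagram isomorphisms; in particular the natural map $\Tot^\bullet GH^\bullet_{BC_{\mathcal{J},H}}(X)\to H^\bullet_{\left(\de_H;\de_H\right)}\left(U^\bullet\right)$ is an isomorphism. Because $\del$ and $\delbar$ shift the generalized-complex degree by $+1$ and $-1$, the operator $\del\delbar$ preserves each $U^k$ and $GH^\bullet_{BC_{\mathcal{J},H}}(X)$ is $\Z$-graded by $k$; transporting this grading through the isomorphism exhibits the $H$-twisted de Rham cohomology $H^\bullet_{\left(\de_H;\de_H\right)}\left(U^\bullet\right)$ as the direct sum, over $k\in\Z$, of the images of the $U^k$-summands. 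This is precisely the claimed decomposition in cohomology induced by $\wedge^\bullet X\otimes_\R\C=\bigoplus_k U^k$.
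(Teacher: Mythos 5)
Your proof is correct; note, though, that there is no internal proof in the paper to compare it with: the statement is recalled with attribution to \cite[Theorem 4.2]{cavalcanti} and \cite[Theorem 4.1, Corollary 2]{cavalcanti-jgp}, and Cavalcanti's argument is, like yours, a ``principle of two types'' chase in the style of \cite{deligne-griffiths-morgan-sullivan}. What distinguishes your write-up is that it runs entirely inside this paper's algebraic framework: you reduce both implications to items \eqref{item:equiv-5} and \eqref{item:equiv-7} of Lemma \ref{lemma:equiv}, correctly observing that the additional grading hypothesis there is satisfied because $\hat\delta_1=+1\neq-1=\hat\delta_2$ on $U^\bullet_{\mathcal{J}}$. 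The computational pivots all check: $\de_H\de_H^{\mathcal{J}}=2\,\im\,\del_{\mathcal{J},H}\delbar_{\mathcal{J},H}$, the anticommutation $\de_H\de_H^{\mathcal{J}}+\de_H^{\mathcal{J}}\de_H=0$, the corrected representative $a+\de_H\beta$ (indeed $\de_H^{\mathcal{J}}\left(a+\de_H\beta\right)=\de_H\de_H^{\mathcal{J}}\beta-\de_H\de_H^{\mathcal{J}}\beta=0$), and, in the converse, the recombination in which injectivity is applied to $\de_H b$ so as to trade $b$ for the honest cocycle $b-c$ before surjectivity is invoked --- that is indeed the only non-formal step, and your version is watertight, giving $\ker\de_H\cap\imm\de_H^{\mathcal{J}}\subseteq\imm\del_{\mathcal{J},H}\delbar_{\mathcal{J},H}$ with the reverse inclusion trivial. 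One identification you use silently deserves an explicit line: your two conditions match items \eqref{item:equiv-7} and \eqref{item:equiv-5} because $\left(\de_H^{\mathcal{J}}\right)^2=0=\de_H^2$ forces any $\de_H$-closed element of $\imm\de_H^{\mathcal{J}}$ into $\ker\de_H\cap\ker\de_H^{\mathcal{J}}=\ker\del_{\mathcal{J},H}\cap\ker\delbar_{\mathcal{J},H}$, so that $\ker\de_H\cap\imm\de_H^{\mathcal{J}}=\ker\del_{\mathcal{J},H}\cap\ker\delbar_{\mathcal{J},H}\cap\imm\left(\del_{\mathcal{J},H}-\delbar_{\mathcal{J},H}\right)$, and likewise for $\ker\de_H^{\mathcal{J}}\cap\imm\de_H$. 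Two further remarks: since neither $\de_H$ nor $\de_H^{\mathcal{J}}$ is homogeneous for the $U^\bullet$-grading, your chase lives on the total complex, which is all the statement requires; and for the final decomposition you do not actually need the dimension count of Theorem \ref{thm:caratt-deldelbar-lemma-double} (whose finiteness and boundedness hypotheses you would then have to verify): injectivity of $\Tot GH^{\bullet}_{BC_{\mathcal{J},H}}(X)\to GH_{dR_{H}}(X)$ is exactly item \eqref{item:equiv-5}, and surjectivity is already furnished by your direction-one representative $a+\de_H\beta\in\ker\del_{\mathcal{J},H}\cap\ker\delbar_{\mathcal{J},H}$; since $\del_{\mathcal{J},H}\delbar_{\mathcal{J},H}$ preserves each $U^k$, the Bott--Chern groups are graded by $k$, and directness and spanning of the subspaces of classes with representatives in $U^k$ follow at once, as you conclude.
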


An application of \cite[Proposition 5.17, 5.21]{deligne-griffiths-morgan-sullivan} yields the following result.
\begin{thm}[{\cite[Theorem 4.4]{cavalcanti}, \cite[Theorem 5.1]{cavalcanti-jgp}}]
 A manifold $X$ endowed with an $H$-twisted generalized complex structure $\mathcal{J}$ satisfies the $\de\de^{\mathcal{J}}$-lemma if and only if the canonical spectral sequence degenerates at the first level and the decomposition of complex forms into sub-bundles $U^k$ induces a decomposition in cohomology.
\end{thm}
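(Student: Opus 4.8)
The plan is to reduce the statement to the classical characterization of the $\del\delbar$-Lemma for bounded double complexes and then transport it back along the canonical double complex construction. First I would use the equivalence recalled just above, namely that $X$ satisfies the $\de_H\de_H^{\mathcal{J}}$-Lemma if and only if $\left(U^\bullet,\, \del_{\mathcal{J},H},\, \delbar_{\mathcal{J},H}\right)$ satisfies the $\del_{\mathcal{J},H}\delbar_{\mathcal{J},H}$-Lemma, so that it suffices to characterize the latter. Since $\del_{\mathcal{J},H}\in\End^{1}\left(U^\bullet\right)$ and $\delbar_{\mathcal{J},H}\in\End^{-1}\left(U^\bullet\right)$, one has $\GCD{1}{-1}=1$, and hence Lemma \ref{lemma:deldelbar-lemma-a-doub} applies: the $\del_{\mathcal{J},H}\delbar_{\mathcal{J},H}$-Lemma for $U^\bullet$ is equivalent to the $\left(\del_{\mathcal{J},H}\otimes_\C\id\right)\left(\delbar_{\mathcal{J},H}\otimes_\C\beta\right)$-Lemma for the canonical double complex $\left(\Doub^{\bullet,\bullet}U^\bullet,\, \del_{\mathcal{J},H}\otimes_\C\id,\, \delbar_{\mathcal{J},H}\otimes_\C\beta\right)$, which is an honest bounded double complex.

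Next I would invoke the classical result of P. Deligne, Ph.~A. Griffiths, J. Morgan, and D.~P. Sullivan, \cite[Proposition 5.17, 5.21]{deligne-griffiths-morgan-sullivan}: a bounded double complex satisfies the $\del\delbar$-Lemma if and only if both the spectral sequences attached to its two natural filtrations degenerate at the first level and the two induced filtrations on the cohomology of the associated total complex are opposite, equivalently the bi-grading induces a decomposition of the total cohomology. Applied to $\Doub^{\bullet,\bullet}U^\bullet$, this produces, verbatim, a degeneration condition together with a decomposition condition equivalent to the $\del_{\mathcal{J},H}\delbar_{\mathcal{J},H}$-Lemma.

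It then remains to set up the dictionary back to $U^\bullet$. On the one hand, the two spectral sequences of $\Doub^{\bullet,\bullet}U^\bullet$ are, by construction, the Hodge and Fr\"olicher spectral sequences associated to the $\Z$-graduation $\Tot\wedge^\bullet X\otimes_\R\C=\bigoplus_{k\in\Z}U^k_{\mathcal{J}}$; since conjugation interchanges the two, degeneration of the canonical spectral sequence at the first level is equivalent to degeneration of both. On the other hand, unwinding $\Doub^{\bullet_1,\bullet_2}U^\bullet=U^{\bullet_1-\bullet_2}\otimes_\C\C\,\beta^{\bullet_2}$ shows that the total differential $\del_{\mathcal{J},H}\otimes_\C\id+\delbar_{\mathcal{J},H}\otimes_\C\beta$ acts as $\de_H$ on $\bigoplus_{k\in\Z}U^k_{\mathcal{J}}$, so that the cohomology of $\Tot^\bullet\Doub^{\bullet,\bullet}U^\bullet$ is identified, up to the two-periodicity introduced by $\beta$, with the twisted de Rham cohomology $H^\bullet_{\left( \de_H ; \de_H \right)}\left(\Tot\wedge^\bullet X\otimes_\R\C\right)$, its bigraded pieces being, by Lemma \ref{lemma:cohom-a-doub}, the generalized-complex cohomologies built from the $U^k$. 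Under these identifications the decomposition of the total cohomology furnished by DGMS becomes exactly the decomposition of the twisted de Rham cohomology induced by the sub-bundles $U^k$.

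The hard part will be precisely this last translation, rather than the invocation of the classical characterization. One has to match the $\Z^2$-graded opposite-filtration decomposition of DGMS with the $\Z$-graded splitting into the $U^k$, keeping careful track of the two-periodicity coming from $\beta$ (so that $\Tot^n\Doub^{\bullet,\bullet}U^\bullet$ collects precisely the sub-bundles $U^k$ with $k\equiv n\pmod 2$), and to verify that the two filtrations of the double complex do correspond to the single canonical spectral sequence on the generalized-complex side. Once these identifications are secured, the asserted equivalence follows at once from the classical characterization.
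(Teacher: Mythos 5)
Your proposal matches the paper's route: the paper presents this theorem as a recalled result of G.~R. Cavalcanti and indicates exactly the argument you give, namely an application of \cite[Proposition 5.17, 5.21]{deligne-griffiths-morgan-sullivan} to the canonical double complex $\left(\Doub^{\bullet,\bullet}U^\bullet_{\mathcal{J}},\, \del_{\mathcal{J},H}\otimes_\C\id,\, \delbar_{\mathcal{J},H}\otimes_\C\beta\right)$, combined with the equivalence between the $\de_H\de_H^{\mathcal{J}}$-Lemma and the $\del_{\mathcal{J},H}\delbar_{\mathcal{J},H}$-Lemma recorded just before the statement. Your translation details --- conjugation interchanging the two spectral sequences, the two-periodicity introduced by $\beta$, and the identification of the opposite-filtration decomposition with the splitting induced by the sub-bundles $U^k$ --- are sound and consistent with \cite[Theorem 5.1]{cavalcanti-jgp}.
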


\medskip

Given a compact complex manifold $X$ endowed with an $H$-twisted generalized complex structure, consider the following cohomologies:
$$ GH_{dR_{H}}(X) \;:=\; H_{\left( \de_H ; \de_H \right)}\left(\Tot U^\bullet_{\mathcal{J}}\right) \;, $$
and
$$ GH^{\bullet}_{\delbar_{\mathcal{J},H}}(X) \;:=\; H^\bullet_{\left( \delbar_{\mathcal{J},H} ; \delbar_{\mathcal{J},H} \right)}\left(U^\bullet_{\mathcal{J}}\right) \;, \quad GH^{\bullet}_{\del_{\mathcal{J},H}}(X) \;:=\; H^\bullet_{\left( \del_{\mathcal{J},H} ; \del_{\mathcal{J},H} \right)}\left(U^\bullet_{\mathcal{J}}\right) \;, $$
and
$$ GH^{\bullet}_{BC_{\mathcal{J},H}}(X) \;:=\; H^\bullet_{\left( \del_{\mathcal{J},H} , \delbar_{\mathcal{J},H} ; \del_{\mathcal{J},H}\delbar_{\mathcal{J},H} \right)}\left(U^\bullet_{\mathcal{J}}\right) \;, \qquad GH^{\bullet}_{A_{\mathcal{J},H}}(X) \;:=\; H^\bullet_{\left( \del_{\mathcal{J},H}\delbar_{\mathcal{J},H} ; \del_{\mathcal{J},H} , \delbar_{\mathcal{J},H} \right)}\left(U^\bullet_{\mathcal{J}}\right) \;. $$
Note that, for $H=0$, one has $GH_{dR_{0}}(X)=\Tot H^\bullet_{dR}(X;\R)$.

By \cite[Proposition 5.1]{gualtieri-phdthesis}, \cite[Proposition 3.15]{gualtieri}, it follows that $\dim_\C GH^{\bullet}_{\del_{\mathcal{J},H}}(X)<+\infty$ and $\dim_\C GH^{\bullet}_{\delbar_{\mathcal{J},H}}(X)<+\infty$.

As an application of Theorem \ref{thm:disug-frol}, we get the following result.

\begin{thm}\label{thm:gen-frol-ineq}
 Let $X$ be a compact differentiable manifold endowed with an $H$-twisted generalized complex structure $\mathcal{J}$. Then
 \begin{equation}\label{eq:ineq-frol-cplx-gen}
 \dim_\C GH^{\bullet}_{BC_{\mathcal{J},H}}(X) + \dim_\C GH^{\bullet}_{A_{\mathcal{J},H}}(X) \;\geq\; \dim_\C GH^{\bullet}_{\delbar_{\mathcal{J},H}}(X) + \dim_\C GH^{\bullet}_{\del_{\mathcal{J},H}}(X) \;.
 \end{equation}
\end{thm}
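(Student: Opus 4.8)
The plan is to recognize Theorem \ref{thm:gen-frol-ineq} as a direct instance of the abstract inequality {\itshape \`a la} Fr\"olicher, Theorem \ref{thm:disug-frol}. Concretely, I would take the $\Z$-graded $\C$-vector space $A^\bullet := U^\bullet_{\mathcal{J}}$, together with the two endomorphisms $\delta_1 := \del_{\mathcal{J},H} \in \End^{1}\left(U^\bullet_{\mathcal{J}}\right)$ and $\delta_2 := \delbar_{\mathcal{J},H} \in \End^{-1}\left(U^\bullet_{\mathcal{J}}\right)$. By the defining properties of the generalized complex structure recalled above, these satisfy $\del_{\mathcal{J},H}^2 = \delbar_{\mathcal{J},H}^2 = \del_{\mathcal{J},H}\delbar_{\mathcal{J},H} + \delbar_{\mathcal{J},H}\del_{\mathcal{J},H} = 0$, so that the algebraic hypotheses on $\left(A^\bullet, \delta_1, \delta_2\right)$ in Theorem \ref{thm:disug-frol} are met. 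Under this identification, the four cohomologies appearing in the statement are precisely the four cohomologies of Theorem \ref{thm:disug-frol}: straight from the definitions given above, one has $GH^\bullet_{\del_{\mathcal{J},H}}(X) = H^\bullet_{\left(\delta_1;\delta_1\right)}\left(A^\bullet\right)$, $GH^\bullet_{\delbar_{\mathcal{J},H}}(X) = H^\bullet_{\left(\delta_2;\delta_2\right)}\left(A^\bullet\right)$, $GH^\bullet_{BC_{\mathcal{J},H}}(X) = H^\bullet_{\left(\delta_1,\delta_2;\delta_1\delta_2\right)}\left(A^\bullet\right)$, and $GH^\bullet_{A_{\mathcal{J},H}}(X) = H^\bullet_{\left(\delta_1\delta_2;\delta_1,\delta_2\right)}\left(A^\bullet\right)$.

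The only hypothesis of Theorem \ref{thm:disug-frol} that requires a separate justification is the finite-dimensionality of the two Dolbeault-type cohomologies $H^\bullet_{\left(\delta_1;\delta_1\right)}\left(A^\bullet\right)$ and $H^\bullet_{\left(\delta_2;\delta_2\right)}\left(A^\bullet\right)$. This is supplied by the remark immediately preceding the statement: by \cite[Proposition 5.1]{gualtieri-phdthesis}, \cite[Proposition 3.15]{gualtieri}, one has $\dim_\C GH^\bullet_{\del_{\mathcal{J},H}}(X) < +\infty$ and $\dim_\C GH^\bullet_{\delbar_{\mathcal{J},H}}(X) < +\infty$, which are exactly the two finiteness conditions required. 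With these in hand, Theorem \ref{thm:disug-frol} applies verbatim and yields
\begin{equation*}
\dim_\C H^\bullet_{\left(\delta_1,\delta_2;\delta_1\delta_2\right)}\left(A^\bullet\right) + \dim_\C H^\bullet_{\left(\delta_1\delta_2;\delta_1,\delta_2\right)}\left(A^\bullet\right) \;\geq\; \dim_\C H^\bullet_{\left(\delta_1;\delta_1\right)}\left(A^\bullet\right) + \dim_\C H^\bullet_{\left(\delta_2;\delta_2\right)}\left(A^\bullet\right) \;,
\end{equation*}
which, rewritten through the dictionary above, is exactly \eqref{eq:ineq-frol-cplx-gen}.

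Since the argument is a formal specialization, I do not expect any genuine obstacle in the proof proper: the substantive content has already been absorbed into Theorem \ref{thm:disug-frol} (itself built on J. Varouchas' exact sequences) and into the cited ellipticity/compactness results guaranteeing finite-dimensionality. The one point worth flagging is that here the two differentials sit in degrees $+1$ and $-1$ rather than in the bidegrees $\left(1,0\right)$ and $\left(0,1\right)$ of a genuine double complex; this is harmless, because Theorem \ref{thm:disug-frol} is stated for a single $\Z$-grading with arbitrary $\hat\delta_1,\hat\delta_2$ and imposes no relation between the two degrees, so for this particular inequality no passage to the canonical double complex $\Doub^{\bullet,\bullet}$ is needed (such a passage becomes relevant only for the accompanying characterization result, Theorem \ref{thm:gen-charact}).
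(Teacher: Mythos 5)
Your proposal is correct and coincides with the paper's own argument: the paper proves Theorem \ref{thm:gen-frol-ineq} precisely by applying Theorem \ref{thm:disug-frol} to the $\Z$-graded $\C$-vector space $U^\bullet_{\mathcal{J}}$ endowed with $\del_{\mathcal{J},H}\in\End^{1}\left(U^\bullet_{\mathcal{J}}\right)$ and $\delbar_{\mathcal{J},H}\in\End^{-1}\left(U^\bullet_{\mathcal{J}}\right)$, with finite-dimensionality of $GH^{\bullet}_{\del_{\mathcal{J},H}}(X)$ and $GH^{\bullet}_{\delbar_{\mathcal{J},H}}(X)$ supplied by the same citations to Gualtieri. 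Your closing observation is also accurate: since Theorem \ref{thm:disug-frol} places no constraint on the degrees $\hat\delta_1,\hat\delta_2$, no passage to the canonical double complex $\Doub^{\bullet,\bullet}$ is required here, that device entering only in the characterization of Theorem \ref{thm:gen-charact}.
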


As an application of Corollary \ref{cor:charact-hodge-frolicher-double}, we get the following result; compare it also with \cite[Theorem 4.4]{cavalcanti}.

\begin{thm}\label{thm:gen-charact}
 Let $X$ be a compact differentiable manifold endowed with an $H$-twisted generalized complex structure $\mathcal{J}$.
 The following conditions are equivalent:
 \begin{itemize}
  \item $X$ satisfies the $\del_{\mathcal{J},H}\delbar_{\mathcal{J},H}$-Lemma;
  \item the Hodge and Fr\"olicher spectral sequences associated to the canonical double complex $\left(\Doub^{\bullet,\bullet}U^\bullet_{\mathcal{J}},\, \del_{\mathcal{J},H} \otimes_\C \id,\, \delbar_{\mathcal{J},H} \otimes_\C \beta\right)$ degenerate at the first level and the equality in \eqref{eq:ineq-frol-cplx-gen},
  $$ \dim_\C GH^{\bullet}_{BC_{\mathcal{J},H}}(X) + \dim_\C GH^{\bullet}_{A_{\mathcal{J},H}}(X) \;=\; \dim_\C GH^{\bullet}_{\delbar_{\mathcal{J},H}}(X) + \dim_\C GH^{\bullet}_{\del_{\mathcal{J},H}}(X) \;, $$
  holds.
 \end{itemize}
\end{thm}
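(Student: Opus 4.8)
The plan is to apply Corollary~\ref{cor:charact-hodge-frolicher-double} verbatim to the bounded $\Z$-graded $\C$-vector space $A^\bullet := U^\bullet_{\mathcal{J}}$ endowed with the two anti-commuting square-zero endomorphisms $\delta_1 := \del_{\mathcal{J},H} \in \End^{1}\left(U^\bullet_{\mathcal{J}}\right)$ and $\delta_2 := \delbar_{\mathcal{J},H} \in \End^{-1}\left(U^\bullet_{\mathcal{J}}\right)$, whose relations $\delta_1^2 = \delta_2^2 = \delta_1\delta_2 + \delta_2\delta_1 = 0$ are guaranteed by the integrability of $\mathcal{J}$. First I would verify the hypotheses of that Corollary. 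Boundedness of $U^\bullet_{\mathcal{J}}$ is immediate, since $X$ is compact of dimension $2n$ and $U^k_{\mathcal{J}} = \{0\}$ for $\left|k\right| > n$; the two finiteness requirements $\dim_\C H^\bullet_{\left( \del_{\mathcal{J},H} ; \del_{\mathcal{J},H} \right)}(X) < +\infty$ and $\dim_\C H^\bullet_{\left( \delbar_{\mathcal{J},H} ; \delbar_{\mathcal{J},H} \right)}(X) < +\infty$ are precisely the finite-dimensionality of $GH^{\bullet}_{\del_{\mathcal{J},H}}(X)$ and $GH^{\bullet}_{\delbar_{\mathcal{J},H}}(X)$ recalled above from \cite{gualtieri-phdthesis, gualtieri}. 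Finally, the degrees $\hat\delta_1 = 1$ and $\hat\delta_2 = -1$ give $\GCD{\hat\delta_1}{\hat\delta_2} = 1$, which is exactly the coprimality assumption in Corollary~\ref{cor:charact-hodge-frolicher-double} and, crucially, ensures that its condition~(1) reads literally as ``$U^\bullet_{\mathcal{J}}$ satisfies the $\del_{\mathcal{J},H}\delbar_{\mathcal{J},H}$-Lemma'' rather than as a statement about a reindexed graduation $A^{c\,\bullet}$ with $c > 1$.

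Next I would spell out the translation of the four cohomologies occurring in the Corollary into the generalized-complex notation. For the above choice of $\delta_1$ and $\delta_2$ one has, directly from the definitions in~\S\ref{subsec:cohom-complexes},
\[
 H^\bullet_{\left( \delta_1 ; \delta_1 \right)}\left(A^\bullet\right) = GH^{\bullet}_{\del_{\mathcal{J},H}}(X), \qquad H^\bullet_{\left( \delta_2 ; \delta_2 \right)}\left(A^\bullet\right) = GH^{\bullet}_{\delbar_{\mathcal{J},H}}(X),
\]
and
\[
 H^\bullet_{\left( \delta_1 , \delta_2 ; \delta_1\delta_2 \right)}\left(A^\bullet\right) = GH^{\bullet}_{BC_{\mathcal{J},H}}(X), \qquad H^\bullet_{\left( \delta_1\delta_2 ; \delta_1 , \delta_2 \right)}\left(A^\bullet\right) = GH^{\bullet}_{A_{\mathcal{J},H}}(X).
\]
Under this dictionary, condition~(1) of Corollary~\ref{cor:charact-hodge-frolicher-double} becomes the assertion that $X$ satisfies the $\del_{\mathcal{J},H}\delbar_{\mathcal{J},H}$-Lemma, part~(b) of its condition~(2) becomes exactly the equality in \eqref{eq:ineq-frol-cplx-gen}, and part~(a) of its condition~(2) becomes the degeneration at the first level of the Hodge and Fr\"olicher spectral sequences of the canonical double complex $\left(\Doub^{\bullet,\bullet}U^\bullet_{\mathcal{J}},\, \del_{\mathcal{J},H} \otimes_\C \id,\, \delbar_{\mathcal{J},H} \otimes_\C \beta\right)$.

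Having matched all hypotheses and all four cohomologies, the equivalence claimed in the statement is then nothing but the equivalence of conditions~(1) and~(2) furnished by Corollary~\ref{cor:charact-hodge-frolicher-double}, and the proof concludes. I do not anticipate any real obstacle: the entire argument is a check of hypotheses followed by an unwinding of notation, and the only genuinely analytic ingredient, namely the finite-dimensionality of the $\del_{\mathcal{J},H}$- and $\delbar_{\mathcal{J},H}$-cohomologies coming from the ellipticity of the associated complexes, has already been recorded. The single point that repays a moment's care is the degree bookkeeping securing $\GCD{\hat\delta_1}{\hat\delta_2} = 1$, since this is what makes the $\del_{\mathcal{J},H}\delbar_{\mathcal{J},H}$-Lemma for $U^\bullet_{\mathcal{J}}$ coincide with the one for its reindexing and hence lets Corollary~\ref{cor:charact-hodge-frolicher-double} apply on the nose.
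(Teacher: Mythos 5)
Your proposal is correct and takes essentially the same route as the paper, which proves Theorem \ref{thm:gen-charact} precisely as an application of Corollary \ref{cor:charact-hodge-frolicher-double} to $A^\bullet := U^\bullet_{\mathcal{J}}$ with $\delta_1 := \del_{\mathcal{J},H} \in \End^{1}\left(U^\bullet_{\mathcal{J}}\right)$ and $\delta_2 := \delbar_{\mathcal{J},H} \in \End^{-1}\left(U^\bullet_{\mathcal{J}}\right)$, so that $\GCD{\hat\delta_1}{\hat\delta_2}=1$. Your explicit checks (boundedness of $U^\bullet_{\mathcal{J}}$, finite-dimensionality of $GH^{\bullet}_{\del_{\mathcal{J},H}}(X)$ and $GH^{\bullet}_{\delbar_{\mathcal{J},H}}(X)$ via \cite{gualtieri-phdthesis, gualtieri}, and the coprimality of the degrees) simply make explicit the hypothesis verification the paper leaves implicit.
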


\medskip

Symplectic structures and complex structures provide the fundamental examples of generalized complex structures; in fact, the following generalized Darboux theorem by M. Gualtieri holds. (Recall that a \emph{regular point} of a generalized complex manifold is a point at which the type of the generalized complex structure is locally constant.)

\begin{thm}[{\cite[Theorem 4.35]{gualtieri-phdthesis}, \cite[Theorem 3.6]{gualtieri}}]
 For any regular point of a $2n$-dimensional generalized complex manifold with type equal to $k$, there is an open neighbourhood endowed with a set of local coordinates such that the generalized complex structure is a $B$-field transform of the standard generalized complex structure of $\C^{k}\times\R^{2n-2k}$.
\end{thm}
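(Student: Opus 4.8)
The plan is to reduce the local classification to the two model cases, complex and symplectic, by means of the Poisson structure canonically attached to $\mathcal{J}$, to normalize each piece by a classical Darboux-type argument, and finally to recombine them modulo a $B$-field. As a preliminary reduction, since $H$ is $\de$-closed it is locally exact, $H \stackrel{\text{loc}}{=} \de B'$; applying the $B'$-field transform I may assume $H=0$ and work with an untwisted structure throughout, reinstating the twist at the end.

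First I would extract the real Poisson bivector attached to $\mathcal{J}$. Writing $\mathcal{J}$ in block form $\left(\begin{smallmatrix} A & \pi \\ \sigma & -A^{*} \end{smallmatrix}\right)$ relative to the splitting $TX\oplus T^{*}X$, the upper-right block $\pi\in\wedge^{2}TX$ is a Poisson bivector; this is a direct consequence of the vanishing of $\mathrm{Nij}_{\mathcal{J},H}$. A standard computation shows that at a point of type $k$ the rank of $\pi$ equals $2(n-k)$, so that the symplectic (type-$0$) directions lie in the image of $\pi$ and the complex (type-$k$) directions lie in its kernel. Regularity means precisely that this rank is locally constant, hence $\pi$ defines a \emph{regular} symplectic foliation whose leaves have dimension $2(n-k)$.

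Next I would invoke Weinstein's splitting theorem for Poisson manifolds. Near the regular point it yields coordinates exhibiting $X$ locally as a product $S\times N$, where $S$ is a symplectic leaf of dimension $2(n-k)$ and $N$ is a $2k$-dimensional transversal on which $\pi$ vanishes identically (the rank being locally constant). I would then restrict $\mathcal{J}$ to each factor: on $S$ the structure has full Poisson rank, hence type $0$, so it is a symplectic structure, normalized to the standard $\omega_{0}$ on $\R^{2n-2k}$ by the usual Darboux theorem; on $N$ the bivector is zero, so $\mathcal{J}$ is of pure type $k=\dim_{\C}N$, which by Gualtieri's description of constant-type structures is an integrable complex structure, straightened to the standard one on $\C^{k}$ via the Newlander--Nirenberg theorem. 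Reassembling, the only datum not yet fixed is the off-diagonal coupling between the two factors, which is encoded by a $2$-form $B$; integrability forces the relevant combination to be closed, so that $\mathcal{J}$ is exactly a $B$-field transform of the standard product structure, with canonical pure spinor $e^{B+\im\omega_{0}}\wedge \de z^{1}\wedge\cdots\wedge \de z^{k}$.

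I expect the main obstacle to be the transverse complex factor. Weinstein's theorem only yields a \emph{real} product decomposition of the underlying Poisson manifold, and one must upgrade the Poisson-trivial transversal $N$ to a genuine complex manifold, checking that the almost-complex structure induced by $\mathcal{J}$ on $N$ is integrable and that it is compatible with the symplectic Moser/Darboux normalization carried out on $S$. Coordinating these two normalizations simultaneously, and verifying that the residual coupling can be absorbed into a single $B$-field that is closed relative to the twist $H$, is the delicate analytic heart of the argument.
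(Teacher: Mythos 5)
A preliminary caveat: the paper offers no proof of this statement. It is recalled verbatim from M.~Gualtieri (\cite[Theorem 4.35]{gualtieri-phdthesis}, \cite[Theorem 3.6]{gualtieri}) as background for the discussion of generalized complex structures of type $n$ and type $0$, so the only meaningful comparison is with the cited proof. That proof does not follow your route: Gualtieri works with the pure spinor description, taking a local generator $\rho=\exp\left(B+\im\omega\right)\wedge\Omega$ of the canonical bundle, deducing from the integrability condition $\de_H\rho=v\cdot\rho$ that the ideal generated by the decomposable $k$-form $\Omega$ is differentially closed, applying Nirenberg's \emph{complex Frobenius theorem} to obtain coordinates in which $\Omega=\de z^1\wedge\cdots\wedge\de z^k$, and then running a leafwise Moser argument to normalize $\omega$ along the leaves $\left\{z=\mathrm{const}\right\}$; the leftover $2$-form is the $B$-field. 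Your route via the Poisson bivector and the (regular) Weinstein splitting is instead the strategy used by Abouzaid and Boyarchenko in their strengthening of the theorem to arbitrary points, so the plan is in principle viable; your preliminary reduction of the twist (writing $H\stackrel{\text{loc}}{=}\de B'$ and absorbing it) is also fine.

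As written, however, your argument has a genuine gap at its central step. Weinstein splitting is a statement about the bivector $\pi$ alone: it produces coordinates in which $\pi=\pi_S\oplus 0$, but it does not split $\mathcal{J}$, and ``restricting $\mathcal{J}$ to each factor'' is not a defined operation --- $\mathcal{J}$ couples $TS$, $TN$ and their duals through the blocks $A$ and $\sigma$, and a generalized complex structure does not induce one on a submanifold by naive restriction. The actual content of the theorem is exactly what you defer: constructing from $L_{\mathcal{J}}$ an almost complex structure transverse to the symplectic foliation, showing it is well defined (constant along the leaves --- this is where regularity is used), and deriving its integrability from Courant involutivity. Moreover, plain Newlander--Nirenberg on a fixed transversal $N$ is insufficient, because the transverse almost complex structure a priori varies with the leaf coordinates and the complex and symplectic normalizations must be performed simultaneously and smoothly; one needs the parametric statement, i.e.\ precisely the complex Frobenius theorem that Gualtieri invokes. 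Likewise ``normalized by the usual Darboux theorem'' must be a Moser argument with smooth dependence on the transverse parameters, and your final claim that ``integrability forces the relevant combination to be closed'' is asserted rather than derived: in the spinor picture it drops out of $\de_H\rho=v\cdot\rho$ applied to $\rho=\exp\left(B+\im\omega\right)\wedge\de z^1\wedge\cdots\wedge\de z^k$, but in your block-matrix picture it requires a computation you have not indicated. In short, the skeleton is that of a known alternative proof, but every load-bearing step --- the transverse integrable structure, the parametric normalizations, the closedness of the coupling form --- is named rather than carried out.
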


The standard generalized complex structure of constant type $n$ (that is, locally equivalent to the standard complex structure of $\C^n$), the generalized complex structure of constant type $0$ (that is, locally equivalent to the standard symplectic structure of $\R^{2n}$), and the $B$-field transform of a generalized complex structure are recalled in the following examples. See also \cite[Example 4.12]{gualtieri-phdthesis}.

\begin{ex}[{Generalized complex structures of type $n$, \cite[Example 4.11, Example 4.25]{gualtieri-phdthesis}}] Let $X$ be a compact $2n$-dimensional manifold endowed with a complex structure $J\in\End(TX)$. Consider the ($0$-twisted) generalized complex structure
$$ \mathcal{J}_J
\;:=\;
\left(
\begin{array}{c|c}
 -J & 0 \\
\hline
 0 & J^*
\end{array}
\right)
\;\in\; \End\left(TX\oplus T^*X\right)
\;,$$
where $J^*\in\End(T^*X)$ denotes the dual endomorphism of $J\in\End(TX)$. Note that the $\im$-eigenspace of the $\C$-linear extension of $\mathcal{J}_J$ to $\left(TX \oplus T^*X \right)\otimes_\C \C$ is
$$ L_{\mathcal{J}_J} \;=\; T^{0,1}_JX \oplus \left(T^{1,0}_JX\right)^* \;,$$
and the canonical bundle is
$$ U^n_{\mathcal{J}_J} \;=\; \wedge^{n,0}_JX \;.$$

Hence, one gets that, \cite[Example 4.25]{gualtieri-phdthesis},
$$ U^\bullet_{\mathcal{J}_J} \;=\; \bigoplus_{p-q=\bullet}\wedge^{p,q}_JX \;,$$
and that
$$ \del_{\mathcal{J}_J} \;=\; \del_J \qquad \text{ and } \qquad \delbar_{\mathcal{J}_J} \;=\; \delbar_J \;; $$
note that $\de^{\mathcal{J}_J}$ is the operator $\de^c_J:=-\im(\del-\delbar)$, \cite[Remark 4.26]{gualtieri-phdthesis}.
Note also that $X$ satisfies the $\de\de^{\mathcal{J}_J}$-Lemma if and only if $X$ satisfies the $\de\de^c_J$-Lemma, and that the Hodge and Fr\"olicher spectral sequence associated to the canonical double complex $\left(\Doub^{\bullet,\bullet}U^\bullet_{\mathcal{J}_J},\, \del_{\mathcal{J}_J} \otimes_\R \id,\, \delbar_{\mathcal{J}_J} \otimes_\R\beta\right)$ degenerates at the first level if and only if the Hodge and Fr\"olicher spectral sequence associated to the double complex $\left(\wedge^{\bullet,\bullet}_JX,\, \del_J,\, \delbar_J\right)$ does, \cite[Remark at page 76]{cavalcanti}.

In particular, it follows that, for $\sharp\in\left\{\delbar,\, \del,\, BC,\, A\right\}$,
$$ GH_{\sharp_{\mathcal{J}_J}}^\bullet(X) \;=\; \Tot^\bullet H_{\sharp_J}^{\bullet,-\bullet}(X) \;=\; \bigoplus_{p-q=\bullet} H_{\sharp_J}^{p,q}(X) \;. $$

Therefore, by Theorem \ref{thm:gen-frol-ineq} and Theorem \ref{thm:gen-charact}, and by using the equalities $\dim_\C H^{\bullet_1,\bullet_2}_{{BC}_J}(X) = \dim_\C H^{n-\bullet_2,n-\bullet_1}_{A_J}(X)$ and $\dim_\C H^{\bullet_1,\bullet_2}_{\delbar_J}(X) = \dim_\C H^{n-\bullet_2,n-\bullet_1}_{\del_J}(X)$, one gets the following result, compare Corollary \ref{cor:cplx}, \cite[Theorem A, Theorem B]{angella-tomassini-3}.

\begin{cor}
 Let $X$ be a compact complex manifold. Then the inequality
 $$ \sum_{p-q=\bullet} \dim_\C H_{{BC}_J}^{p,q}(X) \;\geq\; \sum_{p-q=\bullet} \dim_\C H^{p,q}_{\delbar_J}(X) $$
 holds. Furthermore, $X$ satisfies the $\del_J\delbar_J$-Lemma if and only if
 \begin{inparaenum}[\itshape (i)]
  \item the Hodge and Fr\"olicher spectral sequence of $X$ degenerates at the first level, namely,
  $$ \dim_\C H^\bullet_{dR}(X;\C) \;=\; \dim_\C \Tot^\bullet H^{\bullet,\bullet}_{\delbar_J}(X) \;,$$
  and
  \item the equality
  $$ \sum_{p-q=\bullet} \dim_\C H^{p,q}_{{BC}_J}(X) \;=\; \sum_{p-q=\bullet} \dim_\C H^{p,q}_{\delbar_J}(X) $$
  holds.
 \end{inparaenum}
\end{cor}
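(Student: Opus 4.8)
The plan is to specialize the general machinery already developed for $H$-twisted generalized complex structures, namely Theorem \ref{thm:gen-frol-ineq} and Theorem \ref{thm:gen-charact}, to the untwisted structure $\mathcal{J}_J$ induced by the complex structure $J$, taking $H=0$. The finiteness hypotheses of those theorems hold here, since $\dim_\C GH^\bullet_{\del_{\mathcal{J}_J}}(X)<+\infty$ and $\dim_\C GH^\bullet_{\delbar_{\mathcal{J}_J}}(X)<+\infty$. The crucial input is the identification recorded in the Example above, that for $\sharp\in\left\{\delbar,\, \del,\, BC,\, A\right\}$ one has $GH^\bullet_{\sharp_{\mathcal{J}_J}}(X)=\bigoplus_{p-q=\bullet}H^{p,q}_{\sharp_J}(X)$; thus every generalized cohomology reduces to a sum of the corresponding complex cohomologies along the antidiagonals $p-q=\bullet$.

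First I would prove the inequality. Feeding the above identification into Theorem \ref{thm:gen-frol-ineq} turns \eqref{eq:ineq-frol-cplx-gen} into
$$ \sum_{p-q=\bullet}\dim_\C H^{p,q}_{BC_J}(X)+\sum_{p-q=\bullet}\dim_\C H^{p,q}_{A_J}(X)\;\geq\;\sum_{p-q=\bullet}\dim_\C H^{p,q}_{\delbar_J}(X)+\sum_{p-q=\bullet}\dim_\C H^{p,q}_{\del_J}(X)\;. $$
Now I would invoke the duality isomorphisms $\dim_\C H^{p,q}_{BC_J}(X)=\dim_\C H^{n-q,n-p}_{A_J}(X)$ and $\dim_\C H^{p,q}_{\delbar_J}(X)=\dim_\C H^{n-q,n-p}_{\del_J}(X)$: since the substitution $(p,q)\mapsto(n-q,n-p)$ is an involution preserving the antidiagonal index $p-q$, it yields $\sum_{p-q=\bullet}\dim_\C H^{p,q}_{A_J}(X)=\sum_{p-q=\bullet}\dim_\C H^{p,q}_{BC_J}(X)$ and likewise $\sum_{p-q=\bullet}\dim_\C H^{p,q}_{\del_J}(X)=\sum_{p-q=\bullet}\dim_\C H^{p,q}_{\delbar_J}(X)$. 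Both sides of the displayed inequality therefore double, and after dividing by $2$ I obtain exactly the asserted inequality.

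For the characterization I would apply Theorem \ref{thm:gen-charact} to $\mathcal{J}_J$. I first observe that $X$ satisfies the $\del_J\delbar_J$-Lemma of the double complex $(\wedge^{\bullet,\bullet}_JX,\, \del_J,\, \delbar_J)$ if and only if $(U^\bullet_{\mathcal{J}_J},\, \del_{\mathcal{J}_J},\, \delbar_{\mathcal{J}_J})$ satisfies the $\del_{\mathcal{J}_J}\delbar_{\mathcal{J}_J}$-Lemma: indeed, by the Example one has $\del_{\mathcal{J}_J}=\del_J$ and $\delbar_{\mathcal{J}_J}=\delbar_J$ as operators on $\wedge^\bullet X\otimes_\R\C$, and the defining condition $\ker\del\cap\ker\delbar\cap(\imm\del+\imm\delbar)=\imm\del\delbar$ is insensitive to whether one uses the $\Z$-grading by $p-q$ on $U^\bullet_{\mathcal{J}_J}$ or the total grading of the double complex. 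Theorem \ref{thm:gen-charact} then states that this holds if and only if the two spectral sequences of the canonical double complex $(\Doub^{\bullet,\bullet}U^\bullet_{\mathcal{J}_J},\, \del_{\mathcal{J}_J}\otimes_\C\id,\, \delbar_{\mathcal{J}_J}\otimes_\C\beta)$ degenerate at the first level and the equality in \eqref{eq:ineq-frol-cplx-gen} holds.

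Finally I would translate these two conditions back. By the remark of Cavalcanti quoted in the Example, the degeneration of these spectral sequences is equivalent to the degeneration at the first level of the Fr\"olicher spectral sequence of $(\wedge^{\bullet,\bullet}_JX,\, \del_J,\, \delbar_J)$ (the $\del$- and $\delbar$-spectral sequences being interchanged by conjugation, hence degenerating simultaneously), which is precisely $\dim_\C H^\bullet_{dR}(X;\C)=\dim_\C\Tot^\bullet H^{\bullet,\bullet}_{\delbar_J}(X)$, i.e. condition (i). Using once more the duality identifications, the equality in \eqref{eq:ineq-frol-cplx-gen} reads $2\sum_{p-q=\bullet}\dim_\C H^{p,q}_{BC_J}(X)=2\sum_{p-q=\bullet}\dim_\C H^{p,q}_{\delbar_J}(X)$, i.e. condition (ii). I expect the main obstacle to be essentially bookkeeping: verifying that the reindexing by the duality isomorphisms respects the $p-q=\bullet$ grading, and checking that the generalized $\del_{\mathcal{J}_J}\delbar_{\mathcal{J}_J}$-Lemma coincides with the classical $\del_J\delbar_J$-Lemma despite the different gradings; no further geometric input is needed beyond the already-cited Hodge-theoretic finiteness, duality, and degeneration statements.
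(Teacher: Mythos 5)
Your proposal is correct and takes essentially the same route as the paper: there the corollary is likewise obtained by specializing Theorem \ref{thm:gen-frol-ineq} and Theorem \ref{thm:gen-charact} to $\mathcal{J}_J$ via the identification $GH^\bullet_{\sharp_{\mathcal{J}_J}}(X)=\bigoplus_{p-q=\bullet}H^{p,q}_{\sharp_J}(X)$, then using the dualities $\dim_\C H^{p,q}_{BC_J}(X)=\dim_\C H^{n-q,n-p}_{A_J}(X)$ and $\dim_\C H^{p,q}_{\delbar_J}(X)=\dim_\C H^{n-q,n-p}_{\del_J}(X)$ (which preserve the antidiagonal index $p-q$) together with Cavalcanti's remark that the spectral sequences of the canonical double complex degenerate exactly when the Hodge--Fr\"olicher spectral sequence of $\left(\wedge^{\bullet,\bullet}_JX,\,\del_J,\,\delbar_J\right)$ does. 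The extra verifications you flag as bookkeeping (the grading-insensitivity of the $\del\delbar$-Lemma condition and the antidiagonal-preserving reindexing) are precisely the points the paper leaves implicit, and your treatment of them is sound.
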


\begin{landscape}
\smallskip
\begin{footnotesize}
\begin{center}
\begin{table}
\begin{tabular}{c||*{4}{c}|*{4}{c}|*{4}{c}|*{4}{c}|*{4}{c}|*{4}{c}|*{4}{c}||}
\toprule
   $\mathbb{I}_3:=\left.\Z[\im]^3\middle\backslash\C^3\right.$ & \multicolumn{4}{c|}{$\dim_\C \Tot^{-3} H^{\bullet,-\bullet}_{\sharp}(X)$} & \multicolumn{4}{c|}{$\dim_\C \Tot^{-2} H^{\bullet,-\bullet}_{\sharp}(X)$} & \multicolumn{4}{c|}{$\dim_\C \Tot^{-1} H^{\bullet,-\bullet}_{\sharp}(X)$} & \multicolumn{4}{c|}{$\dim_\C \Tot^{0} H^{\bullet,-\bullet}_{\sharp}(X)$} & \multicolumn{4}{c|}{$\dim_\C \Tot^{1} H^{\bullet,-\bullet}_{\sharp}(X)$} & \multicolumn{4}{c|}{$\dim_\C \Tot^{2} H^{\bullet,-\bullet}_{\sharp}(X)$} & \multicolumn{4}{c||}{$\dim_\C \Tot^{3} H^{\bullet,-\bullet}_{\sharp}(X)$}\\
  {\bfseries classes} & $\delbar$ & $\del$ & $BC$ & $A$ & $\delbar$ & $\del$ & $BC$ & $A$ & $\delbar$ & $\del$ & $BC$ & $A$ & $\delbar$ & $\del$ & $BC$ & $A$ & $\delbar$ & $\del$ & $BC$ & $A$ & $\delbar$ & $\del$ & $BC$ & $A$ & $\delbar$ & $\del$ & $BC$ & $A$\\
\midrule[0.02em]\midrule[0.02em]
{\itshape (i)} & 1 & 1 & 1 & 1 & 5 & 5 & 5 & 5 & 11 & 11 & 11 & 11 & 12 & 12 & 12 & 12 & 11 & 11 & 11 & 11 & 5 & 5 & 5 & 5 & 1 & 1 & 1 & 1 \\
\midrule[0.02em]
{\itshape (ii.a)} & 1 & 1 & 1 & 1 & 4 & 4 & 4 & 4 & 9 & 9 & 11 & 11 & 10 & 10 & 11 & 11 & 9 & 9 & 11 & 11 & 4 & 4 & 4 & 4 & 1 & 1 & 1 & 1 \\
{\itshape (ii.b)} & 1 & 1 & 1 & 1 & 4 & 4 & 4 & 4 & 9 & 9 & 11 & 11 & 10 & 10 & 10 & 10 & 9 & 9 & 11 & 11 & 4 & 4 & 4 & 4 & 1 & 1 & 1 & 1 \\
\midrule[0.02em]
{\itshape (iii.a)} & 1 & 1 & 1 & 1 & 3 & 3 & 3 & 3 & 8 & 8 & 11 & 11 & 10 & 10 & 11 & 11 & 8 & 8 & 11 & 11 & 3 & 3 & 3 & 3 & 1 & 1 & 1 & 1 \\
{\itshape (iii.b)} & 1 & 1 & 1 & 1 & 3 & 3 & 3 & 3 & 8 & 8 & 11 & 11 & 10 & 10 & 10 & 10 & 8 & 8 & 11 & 11 & 3 & 3 & 3 & 3 & 1 & 1 & 1 & 1 \\
\midrule[0.02em]\midrule[0.02em]
 & \multicolumn{4}{c|}{$\mathbf{b_0=1}$} & \multicolumn{4}{c|}{$\mathbf{b_1=4}$} & \multicolumn{4}{c|}{$\mathbf{b_2=8}$} & \multicolumn{4}{c|}{$\mathbf{b_3=10}$} & \multicolumn{4}{c|}{$\mathbf{b_4=8}$} & \multicolumn{4}{c|}{$\mathbf{b_5=4}$} & \multicolumn{4}{c||}{$\mathbf{b_6=1}$} \\
\bottomrule
\end{tabular}
 \caption{Generalized complex cohomologies of the Iwasawa manifold.}
 \label{table:iwasawa}
\end{table}
\end{center}
\end{footnotesize}
\smallskip
\end{landscape}

\end{ex}

\begin{ex}[{Generalized complex structures of type $0$, \cite[Example 4.10]{gualtieri-phdthesis}}] Let $X$ be a compact $2n$-dimensional manifold endowed with a symplectic structure $\omega \in \wedge^2 X \simeq \Hom\left(TX; T^*X\right)$.
Consider the ($0$-twisted) generalized complex structure
$$ \mathcal{J}_\omega \;:=\;
\left(
\begin{array}{c|c}
 0 & -\omega^{-1} \\
\hline
 \omega & 0
\end{array}
\right) \;,$$
where $\omega^{-1}\in\Hom\left(T^*X; TX\right)$ denotes the inverse of $\omega \in \Hom\left(TX; T^*X\right)$. Note that the $\im$-eigenspace of the $\C$-linear extension of $\mathcal{J}_\omega$ to $\left(TX \otimes_\R \C\right) \oplus \left(T^*X \otimes_\R \C\right)$ is
$$ L_{\mathcal{J}_\omega} \;=\; \left\{ X -\im \, \omega\left(X,\,\sspace\right) \st X \in TX\otimes_\R\C \right\} \;, $$
which has Clifford annihilator $\exp(\im\,\omega)$,
and the canonical bundle is
$$ U_{\mathcal{J}_\omega}^n \;=\; \C\left\langle \exp\left(\im\,\omega\right) \right\rangle \;.$$
In particular, one gets that, \cite[Theorem 2.2]{cavalcanti-jgp},
$$ U_{\mathcal{J}_\omega}^{n-\bullet} \;=\; \exp{\left(\im\omega\right)}\, \left(\exp{\left(\frac{\Lambda}{2\im}\right)} \left(\wedge^\bullet X \otimes_\R \C\right)\right) \;, $$
where $\Lambda := -\iota_{\omega^{-1}}$.
Note that, \cite[\S2.2]{cavalcanti-jgp},
$$ \de^\mathcal{J_\omega} \;=\; \de^\Lambda \;. $$

By considering the natural isomorphism
$$ \varphi\colon \wedge^\bullet X \otimes_\R \C \to \wedge^\bullet X \otimes_\R \C \;, \qquad \varphi(\alpha) \;:=\; \exp{\left(\im\omega\right)}\, \left(\exp{\left(\frac{\Lambda}{2\im}\right)}\, \alpha\right) \;,$$
one gets that, \cite[Corollary 1]{cavalcanti-jgp},
$$ \varphi\left(\wedge^\bullet X\otimes_\R\C\right) \simeq U^{n-\bullet} \;, \qquad \text{ and }\qquad \varphi\de \;=\; \delbar_{\mathcal{J}_\omega}\varphi  \quad\text{ and }\quad \varphi\de^{\mathcal{J}_\omega} \;=\; -2\im\del_{\mathcal{J}_\omega}\varphi \;; $$
in particular,
$$ GH^\bullet_{\delbar_{\mathcal{J}_\omega}}(X) \;\simeq\; H^{n-\bullet}_{dR}(X;\C) \;. $$

In particular, one recovers Theorem \ref{thm:sympl}, namely,
$$ \dim_\R H^{\bullet}_{\left( \de , \de^\Lambda ; \de\de^\Lambda \right)}\left(X\right) + \dim_\R H^{\bullet}_{\left( \de\de^\Lambda ; \de , \de^\Lambda \right)}\left(X\right) \;\geq\; 2\, \dim_\R H^{\bullet}_{dR}(X;\R) \;, $$
and the equality holds if and only if $X$ satisfies the Hard Lefschetz Condition.
\end{ex}

\begin{ex}[{$B$-transform, \cite[\S3.3]{gualtieri-phdthesis}}] Let $X$ be a compact $2n$-dimensional manifold endowed with an $H$-twisted generalized complex structure $\mathcal{J}$, and let $B$ be a $\de$-closed $2$-form. Consider the $H$-twisted generalized complex structure
$$ \mathcal{J}^B \;:=\; \exp \left(-B\right) \, \mathcal{J} \, \exp B \qquad \text{ where } \qquad \exp B \;=\;
\left(
\begin{array}{c|c}
 \id_{TX} & 0 \\
\hline
 B & \id_{T^*X}
\end{array}
\right) \;.$$
Note that the $\im$-eigenspace of the $\C$-linear extension of $\mathcal{J}$ to $\left(TX \oplus T^*X \right)\otimes_\R \C$ is, \cite[Example 2.3]{cavalcanti},
$$ L_{\mathcal{J^B}} \;=\; \left\{X+\xi-\iota_XB \st X+\xi \in L_{\mathcal{J}}\right\} \;, $$
and the canonical bundle is, \cite[Example 2.6]{cavalcanti},
$$ U^n_{\mathcal{J}^B} \;=\; \exp B \wedge U^n_{\mathcal{J}} \;.$$

Hence one gets that, \cite[\S2.3]{cavalcanti-jgp},
$$ U^{\bullet}_{\mathcal{J}^B} \;=\; \exp B \wedge U^\bullet_{\mathcal{J}} \;. $$
and that, \cite[\S2.3]{cavalcanti-jgp},
$$ \del_{\mathcal{J}^B} \;=\; \exp \left(-B\right) \, \del_{\mathcal{J}} \, \exp B \qquad \text{ and } \qquad \delbar_{\mathcal{J}^B} \;=\; \exp \left(-B\right) \, \delbar_{\mathcal{J}} \, \exp B \;. $$

In particular, one gets that $\mathcal{J}$ satisfies the $\del_{\mathcal{J}}\delbar_\mathcal{J}$-Lemma if and only if $\mathcal{J}^B$ satisfies the $\del_{\mathcal{J}^B}\delbar_{\mathcal{J}^B}$-Lemma.
\end{ex}

\begin{rem}
 We recall that, given a $\de$-closed $3$-form $H$ on a manifold $X$, an \emph{$H$-twisted generalized K\"ahler structure} on $X$ is a pair $\left( \mathcal{J}_1,\, \mathcal{J}_2\right)$ of $H$-twisted generalized complex structures on $X$ such that
 \begin{inparaenum}[\itshape (i)]
  \item $\mathcal{J}_1$ and $\mathcal{J}_2$ commute, and
  \item the symmetric pairing $\left\langle \mathcal{J}_1 \sspace,\, \mathcal{J}_2\ssspace\right\rangle$ is positive definite.
 \end{inparaenum}
 Generalized K\"ahler geometry is equivalent to a bi-Hermitian geometry with torsion, \cite[Theorem 2.18]{gualtieri-kahler}.

 We recall that a compact manifold $X$ endowed with an $H$-twisted generalized K\"ahler structure $\left( \mathcal{J}_1,\, \mathcal{J}_2\right)$ satisfies both the $\de_H\de_H^{\mathcal{J}_1}$-Lemma and the $\de_H\de_H^{\mathcal{J}_2}$-Lemma, \cite[Corollary 4.2]{gualtieri-kahler}.

 Any K\"ahler structure provide an example of a $0$-twisted generalized K\"ahler structure.
 A left-invariant non-trivial twisted generalized K\"ahler structure on a (non-completely solvable) solvmanifold (which is the total space of a $\mathbb{T}^2$-bundle over the Inoue surface, \cite[Proposition 3.2]{fino-tomassini-jsg}) has been constructed by A. Fino and the second author, \cite[Theorem 3.5]{fino-tomassini-jsg}.
\end{rem}

\begin{rem}
 Note that A. Tomasiello proved in \cite[\S B]{tomasiello} that satisfying the $\de\de^\mathcal{J}$-Lemma is a stable property under small deformations.
\end{rem}

\end{document}